\documentclass[12pt]{amsart}
\usepackage[letterpaper,top=2.5cm,bottom=2.5cm,left=3cm,right=3cm]{geometry}

\usepackage[backend=biber,style=numeric,sorting=nyt,doi=false,url=false,isbn=false,giveninits=true]{biblatex}
\addbibresource{Bib-sample.bib}

\usepackage{hyperref}
\usepackage{tabularx,booktabs}
\usepackage{caption}
\usepackage{amsmath} 
\usepackage{bbm}
\usepackage{amsfonts}
\usepackage{amscd}
\usepackage{amsthm}
\usepackage{amssymb} 
\usepackage{cancel}
\usepackage{latexsym}
\usepackage{eufrak}
\usepackage{euscript}
\usepackage{epsfig}
\usepackage{graphics}
\usepackage{array}
\usepackage{enumerate}
\usepackage{dsfont}
\usepackage{color}
\usepackage{wasysym}
\usepackage{multirow}
\usepackage{pdfsync}

\newcommand{\bel}[1]{\begin{equation}\label{#1}}

\newcommand{\be}{\begin{equation}}

\newcommand{\qe}{\end{equation}}

\newcommand{\R}{{\mathbb R}}
\newcommand{\N}{{\mathbb N}}
\newcommand{\Z}{{\mathbb Z}}
\newcommand{\C}{{\mathbb C}}

\newcommand{\hh}{h_{\mathbb{S}}}

\newcommand{\CC}{\mathcal{L}_{\alpha,d}}
\newcommand{\Sl}{\mathbb{S}_{\alpha,l}^{d-1}}

\newcommand{\sss}{\mathfrak{s}}
\newcommand{\bb}{\mathfrak{b}}

\newcommand{\PP}{\widetilde{\mathbf{P}}_4}

\newtheorem{theorem}{Theorem}[section]

\newtheorem{lemma}[theorem]{Lemma}

\newtheorem{definition}[theorem]{Definition}
\newtheorem{remark}[theorem]{Remark}

\newtheorem{prop}[theorem]{Proposition}

\begin{document}

\title[Sharp dispersive estimates for the wave equation]{Sharp dispersive estimates for the wave equation on the 5-dimensional lattice graph}

\author{Cheng Bi}
\address{Cheng Bi: School of Mathematical Sciences, Fudan University, Shanghai 200433, China}
\email{\href{mailto:cbi21@m.fudan.edu.cn}{cbi21@m.fudan.edu.cn}}

\author{Jiawei Cheng}
\address{Jiawei Cheng: School of Mathematical Sciences, Fudan University, Shanghai 200433, China}
\email{\href{mailto:chengjw21@m.fudan.edu.cn}{chengjw21@m.fudan.edu.cn}}

\author{Bobo Hua}
\address{Bobo Hua: School of Mathematical Sciences, LMNS, Fudan University, Shanghai 200433, China; Shanghai Center for Mathematical Sciences, Fudan University, Shanghai 200433, China}
\email{\href{mailto:bobohua@fudan.edu.cn}{bobohua@fudan.edu.cn}}

\begin{abstract}
Schultz \cite{S98} proved  dispersive estimates for the wave equation on lattice graphs $\Z^d$ for $d=2,3,$ which was extended to $d=4$ in \cite{BCH23}. By Newton polyhedra and the algorithm introduced by Karpushkin \cite{K83}, we further extend the result to $d=5:$ the sharp decay rate of the fundamental solution of the wave equation on $\mathbb{Z}^5$ is $|t|^{-\frac{11}{6}}.$ Moreover, we prove Strichartz estimates and give applications to nonlinear equations.
\end{abstract}

\maketitle
\numberwithin{equation}{section}

\section{introduction}\label{sec-intro}

Discrete dispersive equations in the form of difference equations have 
attracted much attention in the literature of mathematics and physics, 
since they constitute a natural way to approach numerically real physical laws. 
Indeed, spatial discretization would be the first step to implement finite difference schemes, transfering an equation on a continuum domain to that on a lattice graph. As for discrete wave equations, they 
appear 
in physical applications such as lattice dynamics and can be used to describe the vibrations of atoms inside crystals. A fundamental model is the monotonic chains, 
see \cite{D93,FH10,K11}. On general graphs, wave equations have been studied in \cite{FT04,HH20,LX22}.

In this paper, we consider dispersive and Strichartz estimates for the discrete wave equation
\begin{equation}\label{equ-wave equ}
    \left\{
    \begin{aligned}
        & \partial_{t}^2 u(x,t) - \Delta u(x,t) = F(x,t), \\
        & u(x,0) = f_1(x),\quad\partial_t u(x,0) = f_2(x),\quad (x,t)\in\Z^d\times \R.
    \end{aligned}
    \right.
\end{equation}
Here the discrete Laplacian $\Delta$ is defined by
\begin{equation*}
    \Delta u (x,t) := \sum_{j=1}^d \left(u(x+\boldsymbol{e_j},t)+u(x-\boldsymbol{e_j},t)-2 u(x,t)\right), 
\end{equation*}
where $\{\boldsymbol{e_j}\}_{j=1}^d$ is the standard basis of the lattice $\Z^d$.

By the discrete Fourier transform, the fundamental solution of \eqref{equ-wave equ} is given by
\begin{equation}\label{equ-fundamental G}
    G(x,t) = \frac{1}{(2\pi)^d}\int_{\mathbb{T}^d}  e^{ix \cdot \xi} \; \frac{\sin (t \, \omega (\xi))}{\omega (\xi)}\,d\xi,\quad \mbox{with} \ \ \omega(\xi) = \left({\sum_{j=1}^{d} (2-2\cos \xi_j)}\right)^{\frac{1}{2}},
\end{equation}
where $\mathbb{T}^d = [-\pi,\pi]^d$ and $x\cdot\xi= \sum_{j=1}^d x_j \xi_j$, see Section \ref{ssec-basics}.

The pioneering work on sharp dispersive estimates for $G$ 
was initiated in Schultz \cite{S98}, where he proves that $G$ decays like $|t|^{-2/3}$ and $|t|^{-7/6}$ when $d=2$ and $3$ respectively. On $\Z^4$, the authors \cite[Theorem 1.1]{BCH23} proved a sharp upper bound  of order $|t|^{-3/2}\log|t|$ (or $\mathcal{O}(|t|^{-3/2}\log |t|)$, in short) as $t\rightarrow\infty$.  In all cases $d=2,3,4$, the following oscillatory integral plays an important role,
\begin{equation}\label{equ-fundamental I}
    I(v,t) := \frac{1}{(2\pi)^d}\int_{\mathbb{T}^d} e^{i t \phi(v,\xi)}\,\frac{1}{\omega(\xi)}\, d\xi,\quad \phi(v,\xi):=v \cdot \xi - \omega(\xi).
\end{equation}
Note that $G(x,t)$ is the imaginary part of $-I(x/t,t)$.
 
Based on the analysis in \cite{S96,S98},  we know the main obstacle is to describe long-time asymptotic behaviour for $I$ when $|v|$ is small, cf. Section \ref{sec-main thm}. In this case, $\phi(v,\cdot)$ has degenerate critical points and the method of stationary phase breaks down. When $d\le 3$, in the terminology of \cite{AGV12}, only stable singularities $A_k$ ($k\le 5$) and $D_4$ appear. As $d$ increases, however, the singularity type becomes complicated.
Instead of classifying all kinds of singularities, we seek a suitable way to obtain the stability of $I$ and find its optimal decay rate, uniformly in $v$. Here uniformity in $v$ for the decay is the key issue.

In the sense of V. I. Arnold \cite{Ar73}, for an oscillatory integral
\begin{equation}\label{equ-J}
    J(t,S,\psi) \,:=\, \int_{\R^d}\, e^{i t S(\xi)}\, \psi(\xi)\, d\xi,
\end{equation} 
establishing the uniform estimate is to determine whether the decay estimate of $J(t,S,\psi)$ could be extended to $J(t,S+P,\psi)$ for 
$P$ with sufficiently small norm. If $d=1$, this is Van der Corput lemma, cf. \cite[Chapter 8]{S93}. If $d=2$, the answer is also affirmative by \cite{K84}. However, Arnold's conjecture  is not always true when $d\ge 3$, even in the case that $P$ is linear. See the counterexamples in  \cite{G07,V76}. For more results we refer to \cite{G14,IM11tran,PSS99}, all these work are closely related to Newton polyhedra.

In general, when $d\ge 3$ and $S$ has degenerate critical points, it is difficult to establish the sharp uniform estimate of $J(t,S+P,\psi)$. Even for $P=0$, it is still complicated to determine the oscillation index (cf. \eqref{equ-expan} below) of $S$.
Nevertheless, Karpushkin proposed an algorithm in \cite{K83} to determine the uniform upper bound of $J(t,S+P,\psi)$ when $S$ is quasi-homogeneous.  This algorithm reduces $S+P$ to a 
family of polynomial phases 
and can be iterated several times to get the results, see Section \ref{sec-uniform esti}.

For \emph{odd} $d \geq 5$, we proved in the previous paper \cite[Theorem 1.5]{BCH23} the upper bound $\mathcal{O}(|t|^{-\frac{2d+1}{6}})$ for $I(v,t)$ with a fixed $v=v(d):=(\frac{1}{\sqrt{2d}},\cdots,\frac{1}{\sqrt{2d}}) \in \R^d$. This velocity governs the decay rates when $d=3,4$. Due to the lack of uniformity in $v$, the dispersive estimate for $G$
remains open when $d \geq 5$.
In this paper, we prove the sharp decay estimate using Newton polyhedra and the algorithm of Karpushkin for $d=5$.

\begin{theorem}\label{thm-main thm}
    There exists $C>0$ independent of $x\in\Z^5$ such that 
    \begin{equation*}
        |G(x,t)| \leqslant C(1+|t|)^{-\frac{11}{6}}.
    \end{equation*}
\end{theorem}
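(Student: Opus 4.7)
The plan is to analyze the oscillatory integral $I(v,t)$ from \eqref{equ-fundamental I} uniformly in the velocity $v = x/t$ and conclude via the identity $G(x,t) = -\mathrm{Im}\,I(x/t,t)$. Split the velocity range into three regions. For $|v|>1$, the inequality $|\nabla\omega|\le 1$ (a consequence of $\sin^2\xi_j\le 2-2\cos\xi_j$) shows $\phi(v,\cdot)$ has no stationary points, and repeated integration by parts yields decay faster than any polynomial. When $\phi(v,\cdot)$ has only non-degenerate stationary points, ordinary stationary phase gives $|t|^{-5/2}$. The delicate case is when $v$ lies near one of the finitely many \emph{critical velocities} producing a degenerate critical point of $\phi$; by the sign, permutation and reflection symmetries of $\omega$, it suffices to treat a neighbourhood of $v(5)=(1/\sqrt{10},\ldots,1/\sqrt{10})$, whose corresponding critical point is $\xi^*=(\pi/2,\ldots,\pi/2)$.

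Write $\eta=\xi-\xi^*$. Using $\cos(\pi/2+\eta_j)=-\sin\eta_j$ and expanding $\omega^2=10+2\sum_j\sin\eta_j$, one computes
\begin{equation*}
\phi(v,\xi^*+\eta) = \mathrm{const} + (v-v(5))\cdot\eta + \frac{1}{20\sqrt{10}}\Bigl(\sum_j \eta_j\Bigr)^2 + \frac{1}{6\sqrt{10}}\sum_j \eta_j^3 + O(|\eta|^4),
\end{equation*}
so the Hessian has corank $4$ (non-degenerate only in the direction $\sum_j\eta_j$), and varying $v$ introduces only a linear perturbation in $\eta$. Applying the Morse lemma in the one non-degenerate direction, uniformly for $v$ close to $v(5)$, yields a factor $|t|^{-1/2}$ and reduces the analysis to a $4$-dimensional oscillatory integral whose phase $\Phi(v,\zeta)$ at $v=v(5)$, after parametrising the hyperplane $\sum_j\eta_j=0$ by $(\zeta_1,\ldots,\zeta_4)$ via $\eta_5=-\sum_{j=1}^4\zeta_j$, reads
\begin{equation*}
\Phi_0(\zeta) = \frac{1}{6\sqrt{10}}\Bigl(\sum_{j=1}^4\zeta_j^3 - \bigl(\textstyle\sum_{j=1}^4\zeta_j\bigr)^3\Bigr) = -\frac{1}{2\sqrt{10}}\sum_{j\ne k}\zeta_j^2\zeta_k - \frac{1}{\sqrt{10}}\sum_{j<k<l}\zeta_j\zeta_k\zeta_l.
\end{equation*}
The support of $\Phi_0$ lies in the hyperplane $a_1+a_2+a_3+a_4=3$, so its Newton polyhedron has Newton distance $3/4$; moreover the diagonal meets the interior of the principal face, and a direct computation of $\partial_j\Phi_0=\frac{1}{2\sqrt{10}}(\zeta_j^2-S^2)$ with $S=\sum_k\zeta_k$ shows the only critical point of $\Phi_0$ in $(\mathbb{C}^*)^4$ is the origin, so the principal-face polynomial is non-degenerate in Varchenko's sense. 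This identifies the oscillation index as $-4/3$ and, combined with the $|t|^{-1/2}$ Morse factor, produces the expected rate $|t|^{-11/6}$.

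To upgrade this to a bound \emph{uniform} in $v$, we invoke Karpushkin's algorithm from Section \ref{sec-uniform esti}. The perturbed phase $\Phi(v,\cdot)=\Phi_0+(v-v(5))\cdot(\text{linear in }\zeta)+(\text{higher order})$ is a perturbation of the quasi-homogeneous polynomial $\Phi_0$, and the algorithm rewrites it as a finite family of polynomial phases, each analysed via its own Newton polyhedron and Varchenko non-degeneracy check; branches on which non-degeneracy fails are iterated further. The main obstacle is executing this uniform step. Unlike the cases $d=3,4$, where only stable singularities ($A_k$ with $k\le 5$ and $D_4$) arise, the four-variable cubic $\Phi_0$ has a richer face structure: its restrictions to coordinate hyperplanes $\zeta_j=0$ inherit the degeneracies already present in the $d=4$ analysis. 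The heart of the argument will be to carry the Karpushkin iteration through these degenerate sub-faces while keeping track of the linear perturbation $(v-v(5))\cdot\eta$ and the higher-order corrections absorbed by the Morse reduction, and to verify that every branch terminates at the sharp exponent $4/3$ without logarithmic loss.
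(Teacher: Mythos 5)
Your overall frame (reduce to uniform bounds for $I(v,t)$ near degenerate critical points, with the corank-4 point at $(\tfrac{\pi}{2},\dots,\tfrac{\pi}{2})$ governing the rate $|t|^{-11/6}$) matches the paper, but there are two genuine gaps. First, the reduction ``finitely many critical velocities, and by symmetry it suffices to treat a neighbourhood of $v(5)$'' is false on $\Z^5$. By Lemma \ref{lem-dege}, the degenerate sets $\Sigma_2,\Sigma_3$ consist of points with exactly three, resp.\ four, coordinates equal to $\tfrac{\pi}{2}$ and the remaining coordinates \emph{free}, so $\Omega_2=\nabla\omega(\Sigma_2)$ and $\Omega_3=\nabla\omega(\Sigma_3)$ are positive-dimensional continua of velocities that are not related to $v(5)$ by the sign/permutation symmetries. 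Your trichotomy (no critical points / non-degenerate / near $v(5)$) therefore omits the corank-2 and corank-3 cases entirely; the paper must prove separate uniform bounds there, namely $M(\mathbf{P}_3)\curlyeqprec(-2,1)$ (Proposition \ref{prop-Q_11 3+4}~$(i)$, itself requiring Karpushkin's Theorem \ref{thm-algo}) and the $D_4$-type bound $(-\tfrac{13}{6},0)$ via the two-dimensional adapted-coordinate theory, see Cases 2 and 3 of Theorem \ref{thm-4cases}. Relatedly, pointwise stationary phase ``$|t|^{-5/2}$ at non-degenerate points'' is not uniform as $v$ approaches these degenerate sets or as critical points approach $\xi=0$, where the amplitude $\omega^{-1}$ is singular; the paper isolates a neighbourhood of $\xi=0$ (the term $I_1$, handled by Schultz's estimates for $|v|>\mathbf{c}$ and by \cite[Proposition 2.3]{S98}) and gets uniformity elsewhere from compactness plus the perturbation-stable bounds of Definition \ref{def-ue}, exploiting that $\phi(v,\cdot)-\phi(v_0,\cdot)$ is a small analytic perturbation.

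Second, in the corank-4 case the decisive statement is not a pointwise oscillation index for $\Phi_0$ but the perturbation-uniform estimate $M(\mathbf{P}_4)\curlyeqprec(-\tfrac{11}{6},0)$ (Proposition \ref{prop-Q_11 3+4}~$(ii)$), and your argument for even the pointwise index is not justified: the Newton distance of $\Phi_0$ is $\tfrac34<1$, outside the range in which Varchenko's index theorem as quoted in Section \ref{ssec-newton} applies, and the paper instead computes the asymptotics of $J(\lambda,\widetilde{\mathbf{P}}_4,\psi)$ directly in the Appendix (Proposition \ref{prop-PP4}) only to establish \emph{sharpness}. For the upper bound you explicitly defer the Karpushkin iteration (``the heart of the argument will be to carry the iteration through\dots''), but that iteration is precisely the content of Section \ref{sec-uniform esti}: one must verify the versality hypotheses, treat the rank-1 and rank-2 deformations of $\widetilde{\mathbf{P}}_4$ separately, reduce to three- and then two-variable phases (the $U_{12}$ germ in Lemma \ref{lem-MM}, Lemma \ref{lem-F3}, binary quartics via Lemmas \ref{lem-binary4} and \ref{lem-coincide}, and resultant computations ruling out bad critical points), before Theorem \ref{thm-algo} yields $(-\tfrac43,0)$ and hence $(-\tfrac{11}{6},0)$ after the quadratic direction. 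Moreover, your description of that step (iterating over faces of the Newton polyhedron of $\Phi_0$ and its restrictions to coordinate hyperplanes) is not how the reduction works; the algorithm proceeds through critical points of versal deformations $\mathbf{P}_4+g$, not through sub-faces. As it stands, the proposal is a plan whose hardest components, the uniform estimates for coranks $2$, $3$ and $4$, are respectively missing, missing, and only announced.
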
 

\begin{remark}
  (a)
    A related model is the discrete Klein-Gordon equation
    \begin{equation*}
        \partial_t^2 u(x,t) - \Delta u(x,t) +m_*^2 \, u(x,t)= 0 ,
    \end{equation*}
    where $m_*>0$ is the mass parameter. This model has been studied in Cuenin and Ikromov \cite{CI21} for $d=2,3,4$ (see also \cite{BG17}). Theorem \ref{thm-main thm} gives a partial answer to the conjecture in  \cite{CI21}. Indeed, let $G_*$ be the corresponding fundamental solution, then $G$ has additional singularity at the origin compared with $G_*$. When $d\ge 3$, by the techniques in \cite{S96,S98} and slight modifications of our proof, one can show that $G_*$ shares the same decay estimate with $G$. This yields sharp dispersive estimates for the discrete Klein-Gordon equation on $\Z^d$.

    (b)Theorem \ref{thm-main thm} is sharp in the sense that there exists $c_0\ne 0$ such that
    \begin{equation}\label{equ-sharp}
          \lim_{t\rightarrow \infty}  \left|t^{\frac{11}{6}} \, I(v(5),t)\right| = c_0,
    \end{equation}
    where $v(5)=(\frac{1}{\sqrt{10}},\cdots,\frac{1}{\sqrt{10}}) \in\R^5$, see \hyperref[Appendix]{Appendix}.
\end{remark}

For all $d\ge 3$, the main ingredient of the proof of the theorem is the uniform estimate of \eqref{equ-J} with phase of the type
\begin{equation}\label{equ-PP}
    \mathbf{P}_{m}(z)\equiv\mathbf{P}_{m,d}^{\Psi} (z):=\left(\sum_{j=1}^{m}z_j\right)^3-\sum_{j=1}^{m} z_j^3 \,+\,\Psi_{m,d}(z_{m+1},\cdots,z_d), \ \ 2\le m\le d-1,
\end{equation}
where $\Psi_{m,d}$ is a nondegenerate quadratic form. This polynomial appears naturally in the study of \eqref{equ-fundamental I}. 
 Indeed, if $\phi(v,\cdot)$ has a degenerate critical point $\xi_0$ with corank $ m \in [2\,,d-1]$ (i.e. rankHess$_\xi\phi(v,\xi_0)=d-m$), 
then $\phi(v,\cdot)$ can be expressed as
$ \mathbf{P}_m(z) + \mathcal{O}\left(|z|^4\right)$ near $\xi_0$. 
In particular, $I(v(d),t)$ corresponds to the crucial phase $\mathbf{P}_{d-1}$, see \cite[Lemma 3.6]{BCH23}.
Note also that $\mathbf{P}_2$ has $D_4$ type singularity, and $\mathbf{P}_3$ can be reduced to $z_1 z_2 z_3 + \Psi_{3,d}$ (in this case $\phi$ has $T_{4,4,4}$ type singularity, cf. \cite{CI21}).

In our context $d=5$, the new case $\mathbf{P}_4$ is the most complicated one, which has singularity of class $O$, cf. \cite[p. 253]{AGV12}. 
Karpushkin's algorithm will be applied to this phase, see Section \ref{sec-uniform esti}. More precisely, we consider the deformation of $\mathbf{P}_4$ with rank $\ge 2$. 
Using change of variables, we reduce $\mathbf{P}_4$ to homogenous polynomials with three variables, involving many parameters. 
Then we repeat this process and reduce the problem to oscillatory integrals in $\R^2$, where the phases are expressed in adapted coordinate systems (cf. Section \ref{ssec-newton}). Finally, we use results in \cite{K84,V76} to obtain the desired bound. 

Note that the key difficulty is to deal with highly degenerate oscillatory integrals, and our approach is different from that in \cite{CI21} for the discrete Klein-Gordon equation. To the best of the authors' knowledge, it is the first time to adopt Karpushkin's algorithm for estimating such oscillatory integrals with phase of corank $>3$. Our results are indeed valid for any analytic perturbation. However, our iteration approach will be tedious as $d$ increases. Furthermore, as a germ at the origin, $\mathbf{P}_{d-1}$ is not finitely determined (cf. \cite[Chapter 5]{M21}) for \emph{even} $d\ge 4$ since $0$ is not its isolated critical point. For these reasons, the dispersive estimates when $d\ge 6$ remain open.

By Theorem \ref{thm-main thm} and a well-known result in \cite{KT98}, we obtain the following Strichartz estimate. Also, a standard argument can be used for the global existence of the solutions to nonlinear equation \eqref{equ-wave equ} with small initial data, see Theorem \ref{thm-global-1}. 
\begin{theorem}\label{thm-stri}
    Let $d=5$ and $u$ be the solution to \eqref{equ-wave equ}. If indices $q,r,\widetilde{q},\widetilde{r}$ satisfy
    \begin{equation}\label{equ-stri index}
        q,r,\tilde{q},\tilde{r} \geqslant 2,\quad \frac{1}{q} \leq \frac{11}{6}\left(\frac{1}{2}-\frac{1}{r}\right)\quad\mbox{and}\quad
        \frac{1}{\tilde{q}} \leq \frac{11}{6}\left(\frac{1}{2}-\frac{1}{\tilde{r}}\right),
    \end{equation}
    then there exists $C=C(q,r,\widetilde{q},\widetilde{r})$ such that
    \begin{equation*}
    \|u\|_{L^q_t \ell^r} \leqslant C\left(\|f_1\|_{\ell^2} + \|f_2\|_{\ell^{\frac{10}{7}}} + \|F\|_{L_t^{\tilde{q}'} \ell^{\frac{5\tilde{r}'}{5+\tilde{r}'}}}\right),
    \end{equation*}
    where $p'$ denotes the conjugate index of $p$ for any $p\in[1,\infty]$.
\end{theorem}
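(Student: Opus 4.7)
My plan is to deduce the Strichartz estimate by combining the sharp dispersive decay of Theorem \ref{thm-main thm} with the abstract machinery of Keel-Tao \cite{KT98}. By Duhamel's formula, the solution of \eqref{equ-wave equ} can be written as
\begin{equation*}
u(t)=\cos(t\sqrt{-\Delta})\,f_1+\frac{\sin(t\sqrt{-\Delta})}{\sqrt{-\Delta}}\,f_2+\int_0^t\frac{\sin((t-s)\sqrt{-\Delta})}{\sqrt{-\Delta}}\,F(\cdot,s)\,ds,
\end{equation*}
and I would first reduce everything to the half-wave propagators $V_\pm(t):=e^{\pm it\sqrt{-\Delta}}$ via $\cos(t\sqrt{-\Delta})=\tfrac{1}{2}(V_++V_-)$ and $\sin(t\sqrt{-\Delta})/\sqrt{-\Delta}=\tfrac{1}{2i}(V_+-V_-)(-\Delta)^{-1/2}$. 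This reduces the problem to Strichartz-type bounds for the two unitary groups $V_\pm$ together with a Riesz-potential estimate for $(-\Delta)^{-1/2}$.

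Next I would verify the two Keel-Tao hypotheses for $U(t)=V_\pm(t)$. Plancherel gives the energy bound $\|V_\pm(t)f\|_{\ell^2}=\|f\|_{\ell^2}$, while the kernel of $V_\pm(t)$ is the oscillatory integral $(2\pi)^{-5}\int_{\mathbb{T}^5}e^{ix\cdot\xi\pm it\omega(\xi)}\,d\xi$. This is precisely the integral analysed in the proof of Theorem \ref{thm-main thm}, but with amplitude $1$ in place of $1/\omega(\xi)$; removing that factor only discards the singularity at the origin, so the same Newton-polyhedra / Karpushkin argument yields the uniform dispersive bound
\begin{equation*}
\|V_\pm(t)\,g\|_{\ell^\infty}\leq C(1+|t|)^{-11/6}\,\|g\|_{\ell^1}.
\end{equation*}

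With decay exponent $\sigma=11/6\neq 1$, the Keel-Tao theorem then furnishes, for every sharp admissible pair $q_0,r\geq 2$ with $\tfrac{1}{q_0}=\tfrac{11}{6}(\tfrac{1}{2}-\tfrac{1}{r})$, the homogeneous estimate $\|V_\pm(t)h\|_{L^{q_0}_t\ell^r}\leq C\|h\|_{\ell^2}$ together with its inhomogeneous counterpart for admissible $(\tilde q_0,\tilde r)$. The full non-sharp range $\tfrac{1}{q}\leq\tfrac{11}{6}(\tfrac{1}{2}-\tfrac{1}{r})$ of \eqref{equ-stri index} then follows by interpolation between the sharp endpoint and the trivial bound $\|V_\pm(t)h\|_{L^\infty_t\ell^r}\leq\|h\|_{\ell^2}$, where the latter uses the lattice embedding $\ell^p\hookrightarrow\ell^r$ valid for $2\leq p\leq r$.

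Finally, the $\ell^{10/7}$ norm on $f_2$ and the mixed norm on $F$ are absorbed by the $(-\Delta)^{-1/2}$ appearing in the half-wave representation of $\sin(t\sqrt{-\Delta})/\sqrt{-\Delta}$, via the discrete Hardy-Littlewood-Sobolev inequality
\begin{equation*}
\|(-\Delta)^{-1/2}g\|_{\ell^{p^*}}\leq C\|g\|_{\ell^{p}},\qquad \frac{1}{p}-\frac{1}{p^*}=\frac{1}{5},\quad 1<p<p^*<\infty.
\end{equation*}
Taking $p^*=2$ gives $p=10/7$ for the $f_2$ term, and taking $p^*=\tilde r'$ gives $p=5\tilde r'/(5+\tilde r')$ for the Duhamel term. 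The only mildly nontrivial ingredient in this plan is the dispersive bound for $V_\pm$, which is a strictly easier variant of Theorem \ref{thm-main thm}; once that is in hand, the rest is a standard application of the Keel-Tao machinery.
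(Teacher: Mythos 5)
Your proposal follows essentially the same route as the paper: Duhamel's formula, the Keel--Tao theorem applied to the half-wave group $e^{\pm it\sqrt{-\Delta}}$ with decay exponent $\sigma=\tfrac{11}{6}$, and the boundedness of $(\sqrt{-\Delta})^{-1}$ (discrete Riesz potential / Hardy--Littlewood--Sobolev, $\ell^{10/7}\to\ell^2$ and $\ell^{\frac{5\tilde{r}'}{5+\tilde{r}'}}\to\ell^{\tilde{r}'}$), which is exactly the paper's argument, the latter step being quoted there from \cite[Lemma 5.5]{BCH23}. The one ingredient you assert rather than prove --- the $(1+|t|)^{-11/6}$ bound for the kernel of $e^{\pm it\sqrt{-\Delta}}$, i.e.\ the oscillatory integral with amplitude $1$ instead of $1/\omega$ --- is precisely the step the paper also leaves implicit by quoting Theorem \ref{thm-main thm}, and your justification is sound: the uniform estimates of Sections \ref{sec-main thm}--\ref{sec-uniform esti} hold for arbitrary smooth amplitudes, and removing the $1/\omega$ singularity only affects the region near $\xi=0$, where one gets the classical $\mathcal{O}(|t|^{-2})$ wave-type decay.
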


The paper is organized as follows.
We recall basic facts about the discrete setting and Newton polyhedra in Section \ref{sec-preli}. We also state some estimates concerning the stability of oscillatory integral in this section. In Section \ref{sec-main thm}, we give the proof of Theorem \ref{thm-main thm}. In Section \ref{sec-uniform esti}, we prove the key result, Proposition \ref{prop-Q_11 3+4}, which is crucial for the proof of Theorem \ref{thm-main thm}. In Section \ref{sec-nonlinear}, we prove Strichartz estimates and give applications to the nonlinear equations. In \hyperref[Appendix]{Appendix} we show the sharpness of Theorem \ref{thm-main thm}.\\

\noindent
\textbf{Notation.}
We  use  $|\cdot|$ and $\cdot$ to denote the length and the inner product on Euclidean spaces, respectively, and $\mathbb{A}^T$ the transpose of matrix $\mathbb{A}$. Let $B_{\R^d}(\xi,r)$ (resp. $B_{\C^d}(\xi,r)$) be the usual open ball in $\R^d$ (resp. $\C^d$) with center $\xi$ and radius $r$, while $\overline{B}_{\R^d}(\xi,r)$ (resp. $\overline{B}_{\C^d}(\xi,r)$) denotes its closure.

The symbols $C, c$ will be used throughout to denote implicit positive constants independent of 
$(x,t)$, which may
vary from one line to the next. For non-negative functions $f$ and $g$, we adopt the notation $f \lesssim g$ if there exists  $C>0$ such that $f \le C g$. For any $\xi\in\R^d$, the translation $\boldsymbol{\tau}_\xi f(z):=f(z+\xi)$.
Also, we use $\partial_j$ to denote $\frac{\partial}{\partial \xi_j}$, $\nabla$ (or $\nabla_\xi$) the usual gradient, and Hess (or Hess$_\xi$)
 the Hessian matrix.
Moreover, we use $ \boldsymbol{w}(U)$ (resp. $\boldsymbol{w}^{-1}(U)$) to denote the image (resp. preimage) of $U$ under map $\boldsymbol{w}$.

\section{Preliminaries}\label{sec-preli}
\subsection{The discrete setting}\label{ssec-basics}

We denote by $\Z^d$ the standard $d$-dimensional integer latticc graph in $\R^d$, that is, $\Z^d := \{x=(x_1,\cdots,x_d)\in \R^d: x_j \in \Z,\, j=1,2,\cdots,d\}$.
For $p \in [1,\infty]$, $\ell^p(\Z^d)$ is the $\ell^p$-space of functions on $\Z^d$ with respect to the counting measure, which is a Banach space endowed with the norm 
\begin{equation*}
    ||f||_{\ell^p} :=\left\{
    \begin{aligned}
        &\left(\sum_{x \in \Z^d} |f(x)|^p\right)^{\frac{1}{p}},\ p\in[1,\infty),\\
        &\sup_{x\in\Z^d} |f(x)|,\ p=\infty.
    \end{aligned}\right.
\end{equation*}
We shall also use $|f|_p$ to denote the $\ell^p$ norm of $f$ for notational convenience. Note that $\ell^p$ spaces are nested, that is, $ \ell^p \subset \ell^q$ for $1 \leqslant p \leqslant q \leqslant \infty$. Moreover, for functions $f,g$ on $\mathbb{Z}^d$,  the convolution product is given by
\begin{equation*}
    f*g(x) := \sum_{y \in \mathbb{Z}^d} f(x-y)g(y),\quad x\in\Z^d.
\end{equation*}

The discrete Fourier transform of function $f$ is given by 
\begin{equation*}
   \Hat{f}(\xi) = \sum_{x\in \mathbb{Z}^d} e^{-i\xi \cdot x}f(x),\quad\xi \in \mathbb{T}^d,
\end{equation*} 
while the inverse transform is defined as
\begin{equation*}
    \Check{f}(x) = \frac{1}{(2\pi)^d} \int_{\mathbb{T}^d} e^{i \xi \cdot x} f(\xi) \,d\xi,\quad x\in \mathbb{Z}^d.
\end{equation*}

Applying the Fourier transform to both sides of (\ref{equ-wave equ}), we get 
\begin{equation*}
\left\{
    \begin{aligned}
        & \partial_t^2 \Hat{u}(\xi,t)   + \omega(\xi)^2\, \Hat{u}(\xi,t)=0,\\
        & \Hat{u}(\xi,0) = \hat{f_1}(\xi),~~\partial _t \Hat{u}(\xi,0) = \Hat{f_2}(\xi),\quad\xi\in\mathbb{T}^d,
    \end{aligned}
\right.
\end{equation*}
which gives
\begin{equation*}
    u(x,t)=\frac{1}{(2\pi)^d} \int_{\mathbb{T}^d} e^{i \xi \cdot x} \left(\cos(t\,\omega(\xi))\hat{f_1}(\xi)+\frac{\sin(t\,\omega(\xi))}{\omega(\xi)} \hat{f_2}(\xi)\right) d\xi,\quad (x,t)\in\Z^d\times\R.
\end{equation*}

In the notion of operator theory,
\begin{equation}\label{equ-solu-semigroup}
    u(\cdot,t)=\cos(t\sqrt{-\Delta})f_1+\frac{\sin (t\sqrt{-\Delta})}{\sqrt{-\Delta}}f_2.
\end{equation}

Without loss of generality, we assume that $f_1 \equiv 0$ unless otherwise stated. 
Then we get $u=f_2*G$, where the $G$ is as in (\ref{equ-fundamental G}). Moreover, let $x=vt$, the relation $\omega(\xi)=\omega(-\xi)$ yields $I(v,t)=I(-v,t)$, which gives
$ G(x,t)=-\mbox{Im}\, I(v,t). $

\subsection{Newton polyhedra}\label{ssec-newton}
Let $S$ be a smooth real-valued function on $\R^d$ and real-analytic at 0 such that 
\begin{equation}\label{equ-SSS}
    S(0)=\nabla S(0)=0.
\end{equation}

 Let $J(t,S,\psi)$ be as in \eqref{equ-J}, 
where $\psi\in C_0^\infty(\R^d)$ with support near the origin. Then the following asymptotic expansion holds (cf. e.g. \cite[p. 181]{AGV12}),
\begin{equation}\label{equ-expan}
    J(t,S,\psi) \approx \sum_{\tau} \sum_{\rho=0}^{d-1} c_{\tau,\rho,\psi} \, t^\tau \log^\rho t,\quad \mbox{as} \ \ t\rightarrow+\infty,
\end{equation}
where $\tau$ runs through finitely many arithmetic progressions not depending on $\psi$, which consists of negative rational numbers.
Let $(\tau_S,\rho_S)$ be the maximum over all pairs $(\tau,\rho)$ in \eqref{equ-expan} under the
lexicographic ordering such that for any neighborhood $U$ of the origin, there exists $\psi\in C_0^{\infty}(U)$  for which $c_{\tau_S,\rho_S,\psi}\ne 0$. We call $\tau_S$ the \emph{oscillation index} of $S$ at 0 and $\rho_S$ its \emph{multiplicity}.

The pioneer work of Varchenko \cite{V76} connects \eqref{equ-expan} with the geometry of Newton polyhedra, which we shall recall in the following part. We will use basic notions from \cite{V76}, see also \cite{AGV12,G09,IM11}.

 The associated Taylor series of $S$ at 0 can be written as
\begin{equation}\label{equ-series}
    S(\xi) = \sum_{\gamma\in \mathcal{T}(S)} s_{\gamma} \, \xi^{\gamma},\quad\ \mbox{where}\ \ \mathcal{T}(S) = \{\gamma \in \mathbb{N}^d :s_{\gamma} \neq 0 \}.
\end{equation}
Without loss of generality, we assume that $\mathcal{T}(S)\ne \emptyset$. The Newton polyhedron of $S$, denoted by $\mathcal{N}(S)$, is the convex hull of the set
\begin{equation*}
    \bigcup_{\gamma \, \in\, \mathcal{T}(S)} \left(\gamma + \mathbb{R}^d_{+}\right), \quad\mbox{where}\ \ \R^d_+=\{(\xi_1,\cdots,\xi_d)\in \R^d: \xi_j\geq 0, \, j=1,\cdots,d \}.
\end{equation*}
 The Newton distance $d_S$ is defined as
\begin{equation*}
        d_S = \inf \{\varrho>0:(\varrho,\varrho,\cdots,\varrho) \in \mathcal{N}(S)\}.
\end{equation*}

The principal face $\mathcal{P}_S$  is the face on $\mathcal{N}(S)$ of minimal dimension containing 
$(d_S,\cdots,d_S)$. In particular, under certain nondegeneracy condition, it is proved in \cite{V76} that the oscillation index of $S$ at $0$ is $\frac{1}{d_S}$ if $d_S>1$.

Since $d_S$ depends on the choice of coordinate systems, the height of $S$ is given by
\begin{equation}\label{equ-height}
    h_S:=\sup \{d_{S,\xi}\},
\end{equation}
where the supremum is taken over all local analytic coordinate systems $\xi$ which preserve the origin, and $d_{S,\xi}$ is the Newton distance in coordinates $\xi$. A given coordinate system $\widetilde{\xi}$ is said to be {adapted} to $S$ if $d_{S,\widetilde{\xi}} = h_S$. 

If $d=2$, the following results, derived by \cite[Proposition 0.7, 0.8]{V76} and \cite[Lemma 7.0]{G09}, can recognize whether a given coordinate system is adapted. Note also that adapted coordinates may not exist when $d\ge 3$ by the counterexample in \cite{V76}.
\begin{prop}\label{prop-adapt}
Let $d=2$, $S$ be as in \eqref{equ-SSS} and one of the following conditions holds:
\begin{itemize}
    \item[(a)] $\mathrm{dim}_{\R^2}(\mathcal{P}_S)=0$, i.e. $\mathcal{P}_S$ is a single point.
    \item[(b)] $\mathcal{P}_S$ is unbounded.
    \item[(c)] $\mathcal{P}_S$ is a compact edge. Moreover,
    \[
    \mathcal{P}_S\subset \{\xi: a_1 \xi_1 + \xi_2 = a_2\}\quad \mbox{with}\ \ a_1,a_2\in\N,
    \]
    and $f_{\mathcal{P}_S}(\cdot\,,1)$ does not have a real root of multiplicity larger than $\frac{a_2}{1+a_1}$, where $f_{\mathcal{P}_S}(\xi)= \sum_{\gamma\in\mathcal{P}_S}s_\gamma \xi^{\gamma}$.
\end{itemize}
Then the coordinate system is adapted.
\end{prop}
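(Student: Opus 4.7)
The plan is to invoke the Varchenko--Greenblatt characterization of adapted coordinates in two variables. The starting point is the following dichotomy from \cite{V76,G09}: among all real-analytic coordinate changes preserving the origin, the only ones that can strictly increase the Newton distance in dimension two are the ``Varchenko shears'' $\widetilde{\xi}_2=\xi_2-\psi(\xi_1)$ (or the mirror image with axes swapped), with $\psi$ real-analytic and $\psi(0)=0$; moreover, such a shear is effective only when the leading term of $\psi$ matches a real root of $f_{\mathcal{P}_S}$ viewed as a quasi-homogeneous polynomial on the principal face. The task therefore reduces to checking, in each of the three cases, that no effective Varchenko shear exists.

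In case (a), the face polynomial is the single monomial $s_{\alpha,\beta}\,\xi_1^{\alpha}\xi_2^{\beta}$ corresponding to the vertex, which has no real roots off the coordinate axes; consequently no Varchenko shear acts nontrivially on the principal data, and $d_{S,\xi}=h_S$ follows directly. In case (b), the unbounded principal face must be a ray parallel to a coordinate axis (since $\mathcal{N}(S)\subset\R^2_+$), whence $f_{\mathcal{P}_S}$ is again a pure monomial of the form $s\,\xi_j^{d_S}$ for some $j$, and the same argument applies. These two situations are essentially combinatorial and require no new computation.

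The main obstacle is case (c), where $\mathcal{P}_S$ is a compact edge on $\{a_1\xi_1+\xi_2=a_2\}$. Here a genuine computation is required: only Varchenko shears whose leading exponent equals the edge slope $a_1$ can affect the principal face, so I may restrict attention to maps of the form $\widetilde{\xi}_2=\xi_2-c\,\xi_1^{a_1}+O(\xi_1^{a_1+\varepsilon})$. Substituting and tracking the resulting Newton polyhedron of $S\circ\boldsymbol{w}$, the monomials on the face get redistributed according to the Taylor expansion of $f_{\mathcal{P}_S}(\xi_1,\xi_2+c\,\xi_1^{a_1})$ about $\xi_2=c\,\xi_1^{a_1}$, and the vanishing order there is precisely the multiplicity $M$ of $c$ as a real root of $f_{\mathcal{P}_S}(\cdot\,,1)$. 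A short Newton-polyhedron calculation, carried out in detail in \cite[Propositions 0.7, 0.8]{V76} and \cite[Lemma 7.0]{G09}, then shows that the Newton distance of $S\circ\boldsymbol{w}$ strictly exceeds $d_S$ precisely when $M>\frac{a_2}{1+a_1}$, and the hypothesis forbids this. The delicate technical step is this precise matching between the root multiplicity and the gain in Newton distance; once that threshold is established, all three cases conclude uniformly.
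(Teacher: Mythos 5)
The paper gives no independent proof: Proposition~\ref{prop-adapt} is simply read off from \cite[Propositions~0.7--0.8]{V76} and \cite[Lemma~7.0]{G09}, and you defer to the same references, so the underlying strategy is identical. Your sketch of case~(c) correctly captures the mechanism (only shears whose leading exponent matches the edge slope can modify the principal part, and $\tfrac{a_2}{1+a_1}=d_S$ is the critical root multiplicity), and case~(a) is fine.

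Case~(b), however, is misargued. On an unbounded ray $\mathcal{P}_S=\{(t,d_S):t\geq n\}$ with $n<d_S$, the face polynomial is
\[
f_{\mathcal{P}_S}(\xi)=\xi_2^{\,d_S}\sum_{k\geq 0}s_{(n+k,\,d_S)}\,\xi_1^{\,n+k},
\]
which need not be a monomial, and $f_{\mathcal{P}_S}(\cdot\,,1)$ can have a nonzero real root of high multiplicity. For instance $S(\xi)=\xi_2^3(\xi_1-1)^4$ satisfies \eqref{equ-SSS}, has $d_S=3$ with $\mathcal{P}_S$ the horizontal ray from $(0,3)$, and $f_{\mathcal{P}_S}(\cdot\,,1)=(\xi_1-1)^4$ has $\xi_1=1$ as a root of multiplicity $4>d_S$; yet the coordinates \emph{are} adapted. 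So the ``pure monomial, hence no effective shear'' argument collapses. The actual reason for case~(b) is different: the supporting line of an unbounded face is $\xi_2=d_S$ or $\xi_1=d_S$, so the exponent $a_1$ in your shear framework degenerates to $0$ (or, after swapping axes, to $\infty$); the only candidate shear $\xi_2\mapsto\xi_2-\psi(\xi_1)$ would need $\psi$ to be constant, which is ruled out by $\psi(0)=0$, and the root $\xi_1=1$ in the example above is simply inaccessible to any origin-preserving analytic change of coordinates. Since the rigorous content is delegated to the cited references in any case, this is a flaw only in your explanatory narrative rather than in the appeal itself, but as written it would mislead a reader.
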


For instance, one can verify both $ \xi_1^3 \xi_2$ and $ \xi_1^2 \xi_2^2$ are expressed in adapted coordinates with Newton distance $2$, while for $\mathbf{P}_{d-1}$ the Newton distance is $\frac{6}{2d+1}$, cf. \cite{BCH23}. 

\subsection{Results on uniform estimates}\label{ssec-uniform}
As we mentioned in Section \ref{sec-intro}, it is natural to consider the stability of \eqref{equ-J}. We need some notation initiated from \cite{K83}. 

\begin{definition}
     For any $r,s>0$, the space $\mathcal{H}_r(s)$ is defined as
\[
\mathcal{H}_r(s)=\left\{ P: \
\begin{aligned}
    &P \ \mbox{is holomorphic on}\ B_{\C^d}(0,r), \mbox{continuous on}\\ &\  \overline{B}_{\C^d}(0,r),\ \mbox{and}\ |P({w})|<s,\ \forall \, {w}\in \overline{B}_{\C^d}(0,r)
\end{aligned}
\ \right\}.
\]
\end{definition}

\begin{definition}\label{def-ue}
    Let $(\beta,p)\in \R\times\N$ and $f:\mathbb{R}^d \rightarrow \mathbb{R}$ be real-analytic at 0, we write 
    \begin{equation*}
        M(f) \curlyeqprec (\beta,p)
    \end{equation*} 
    if for $r>0$ sufficiently small, there exist $\epsilon>0$, $C>0$ and a neighbourhood $U\subset B_{\R^d}(0,r)$ of the origin such that
    \begin{equation*}
         |J(t,f+P,\psi)| \le C (1+|t|)^{\beta} \log^p(|t|+2) \|\psi\|_{C^N(U)}
    \end{equation*}
    for all $\psi\in C_0^{\infty}(U)$ and $P\in \mathcal{H}_r(\epsilon)$, where $J$ is as in \eqref{equ-J}, $N=N(h)\in \N$ and 
    $\|\psi\|_{C^N(U)}=\sup\big\{|\partial^{\gamma}\psi(\xi)|:\xi\in U,\,\gamma\in\N^d,\|\gamma\|_{\ell^1(\N^d)}\leq N\big\}.$
\end{definition}

The following theorem is a consequence of \cite[Theorem 2.1]{K84} and \cite[Theorem 0.6]{V76}.
\begin{theorem}\label{thm-twodim}
   Let $d=2$, $S$ be as in \eqref{equ-SSS} and $h_S$ be as in \eqref{equ-height},  then there exist coordinate systems that are adapted to $S$.
    Moreover,  $
    M(S)\curlyeqprec (\tau_S,\rho_S)
    $ and $
    \tau_S=-h_S^{-1}.
    $
\end{theorem}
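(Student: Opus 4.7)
The plan is to combine Varchenko's analysis of the Newton polyhedron with Karpushkin's uniform stability theorem, using in an essential way that $d=2$.

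For the existence of adapted coordinates I would run an iterative algorithm. Start from an arbitrary local analytic coordinate system preserving the origin. If it is not adapted, then by the contrapositive of Proposition \ref{prop-adapt} the principal face $\mathcal{P}_S$ must be a compact edge lying on a line $a_1 \xi_1 + \xi_2 = a_2$ with $a_1,a_2\in \mathbb{N}$, and $f_{\mathcal{P}_S}(\cdot,1)$ admits a real root $c$ of multiplicity $m > a_2/(1+a_1)$. The analytic change of variables $\xi_2 \mapsto \xi_2 + c\,\xi_1^{a_1}$ eliminates this root from the principal part and strictly increases the Newton distance $d_{S,\xi}$. Since $d_{S,\xi}$ is bounded above by the finite height $h_S$ and each step increases it by at least a definite amount (controlled by the combinatorics of $\mathcal{N}(S)$), the procedure terminates after finitely many steps, producing coordinates $\widetilde{\xi}$ with $d_{S,\widetilde{\xi}} = h_S$.

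For the estimate $M(S) \curlyeqprec (\tau_S,\rho_S)$ I would work in these adapted coordinates $\widetilde{\xi}$. Varchenko's formula \cite{V76} then gives $\tau_S = -1/d_{S,\widetilde{\xi}} = -h_S^{-1}$, together with the multiplicity $\rho_S$. The uniform stability assertion is precisely the content of Karpushkin's Theorem~2.1 in \cite{K84}: one constructs a toric resolution $\pi$ of $\R^2$ associated to $\mathcal{N}(S)$ in adapted coordinates. Because $\pi$ depends only on the combinatorics of the Newton polyhedron and the Varchenko non-degeneracy condition on the principal part is an open condition, the \emph{same} resolution desingularizes every perturbed phase $S+P$ with $P\in \mathcal{H}_r(\epsilon)$ for $\epsilon$ small. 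Pulling back $J(t,S+P,\psi)$ through $\pi$ and using a partition of unity reduces the integral in each chart to a one-variable oscillatory integral, to which the van der Corput lemma applies; summing the contributions yields $(1+|t|)^{\tau_S}\log^{\rho_S}(|t|+2)$, with the derivatives of $\psi$ that appear bounded by some $N = N(h_S)$.

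The main obstacle is the uniformity over the family $\{S+P : P\in\mathcal{H}_r(\epsilon)\}$. One must verify that (i) the combinatorial resolution remains a resolution after small analytic deformation of the phase, and (ii) the contribution of each chart still produces at worst the same power of $t$ and logarithmic exponent $\rho_S$, independently of $P$. Both points rely critically on the planarity $d=2$, since in $\R^2$ adapted coordinates always exist and the principal face is either a vertex or a single compact edge, so only finitely many normal forms must be controlled; for $d\geq 3$ adapted coordinates may fail to exist and Arnold's uniformity conjecture is known to fail (cf.\ \cite{V76,G07}), so the argument is inherently two-dimensional.
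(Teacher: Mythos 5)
The paper does not actually prove this theorem: it is stated as a direct consequence of Varchenko \cite[Theorem 0.6]{V76} (existence of adapted coordinates in two variables and the identity $\tau_S=-h_S^{-1}$) and Karpushkin \cite[Theorem 2.1]{K84} (the uniform estimate). To the extent that you also rest the two assertions on those two citations, your structure matches the paper's. The problem is the mechanism you offer for the uniform stability, which is not a correct account of why Karpushkin's theorem holds and would not survive as a proof.

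The perturbations allowed in Definition \ref{def-ue} are arbitrary elements of $\mathcal{H}_r(\epsilon)$, i.e.\ arbitrary holomorphic functions of small sup-norm; in particular they contain constant and linear terms and monomials lying below the Newton diagram of $S$. Hence $S+P$ in general has no critical point at the origin: the critical point moves by an amount comparable to a power of $\epsilon$ and can split into several critical points inside the support of $\psi$. A toric resolution built from the combinatorics of $\mathcal{N}(S)$ in coordinates adapted to $S$ therefore does not ``desingularize every perturbed phase,'' and the appeal to openness of a Varchenko nondegeneracy condition is doubly off the mark: no nondegeneracy is assumed (nor needed) in the two-dimensional statement, and the perturbation is not a perturbation of the coefficients of the principal part, so openness of a condition on $f_{\mathcal{P}_S}$ says nothing about $S+P$. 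Controlling exactly this migration and splitting of critical points, uniformly in $P$, is the entire content of Karpushkin's theorem — and the reason the analogous statement fails for $d\ge 3$ (the counterexamples of Varchenko involve nondegenerate-looking deformations as well), so it cannot follow from a soft stability argument. Karpushkin's actual proof proceeds through versal deformations and a reduction-to-normal-forms scheme (the same circle of ideas as Theorem \ref{thm-algo} of the paper), not through a perturbation-stable resolution. A secondary gap: in your construction of adapted coordinates, the termination claim that each substitution increases $d_{S,\xi}$ ``by at least a definite amount controlled by $\mathcal{N}(S)$'' is asserted, not proved, and the contrapositive of Proposition \ref{prop-adapt} does not hand you an edge of the form $a_1\xi_1+\xi_2=a_2$ with $a_1\in\N$ (the slope need not be an integer), so the substitution $\xi_2\mapsto\xi_2+c\,\xi_1^{a_1}$ is not always available as written; the standard two-dimensional argument works instead with the principal root jet / Puiseux expansion of the roots of the principal part. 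As a citation of \cite{K84,V76} the statement is fine; as a self-contained argument the uniformity step is missing.
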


In the sequel, for $\xi\in\R^d$, we write $M(h,\xi) \curlyeqprec (\beta,p)$ if $M(\boldsymbol{\tau}_\xi h)\curlyeqprec (\beta,p)$. Also, we write $M(h_2)\curlyeqprec M(h_1)+(\beta_2, p_2)$, if
    $
    M(h_1)\curlyeqprec (\beta_1,p_1) $ implies that $  M(h_2)\curlyeqprec (\beta_1+\beta_2,p_1+p_2).
    $
    Moreover, if $M(h_2)\curlyeqprec M(h_1)+(0,0)$, then we write $M(h_2)\curlyeqprec M(h_1)$. And if $M(h)\curlyeqprec (\beta_2,p_2)$, then we write $M(h)+(\beta_1,p_1)\curlyeqprec(\beta_1+\beta_2,p_1+p_2)$.

For a given weight 
\begin{equation}\label{equ-weight}
    \alpha=(\alpha_1,\cdots,\alpha_d),\quad\mbox{with}\quad 0<\alpha_d\le \cdots\le \alpha_1 < 1,
\end{equation}
 the one-parameter dilation is defined as
\begin{equation*}
 r^\alpha\xi:=(r^{\alpha_1}\xi_1,\cdots,r^{\alpha_d}\xi_d),\quad \forall\, r>0,\ \, \xi\in\R^d.
\end{equation*}
   \begin{definition}
     A polynomial $f$ is called $\alpha$-homogeneous of degree $\varrho\,(\geq 0)$, if 
       $$
       f(r^\alpha \xi) = r^\varrho f(\xi), \quad \forall\, r>0,\ \xi\in\R^d.
       $$
\end{definition}

\begin{definition}\label{def-space}
    Let $\mathcal{E}_{\alpha,d}$ be the set of  $\alpha$-homogeneous polynomials of degree $1$, $\mathcal{L}_{\alpha,d}$ be the linear space (over $\R$) of $\alpha$-homogeneous polynomials of degree less than $1$, and $H_{\alpha,d}$ be the set of functions real-analytic at 0 with the associated Taylor's series having the form $\sum_{\gamma\cdot\alpha>1} a_\gamma \xi^\gamma$, i.e. each monomial is $\alpha$-homogeneous of degree greater than 1. 
\end{definition}

The following lemma is from \cite[Section 2]{BCH23}, which essentially dates back to \cite{K83}.

\begin{lemma}\label{lem-nocritical+priciple+quad}
    Let $f:\R^d \rightarrow \R$ be real analytic at $0$.
    \begin{itemize}
        \item [(a)] If $\nabla f(0)\neq 0$, then for any $n\in\N$,
        $M(f)\curlyeqprec (-n,0).$ 
        
        \item [(b)] If $f\in \mathcal{E}_{\alpha,d}$ and $P\in H_{\alpha,d}$, then
        $M(f+P) \curlyeqprec M(f).$
        
        \item [(c)] If $
        g(\xi,z) = f(\xi) + \sum_{j=1}^m c_j z_j^2$
        with all $c_j\ne 0$, then
        $  M(g) \curlyeqprec M(f)+ \left(-\frac{m}{2},0\right) .$
    \end{itemize}
\end{lemma}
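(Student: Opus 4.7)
The three parts of this lemma follow standard patterns in the theory of uniform oscillatory integral estimates with small holomorphic perturbations, and each is handled by a different technique. For part (a), the approach is non-stationary phase. By shrinking $r$ and $\epsilon$ in Definition \ref{def-ue}, we may assume that $|\nabla P(0)| < |\nabla f(0)|/2$ for every $P \in \mathcal{H}_r(\epsilon)$, and by continuity $|\nabla(f+P)(\xi)|$ remains bounded below uniformly on a neighborhood $U$ of the origin. Iterated integration by parts using the differential operator $\mathcal{D}\psi := \nabla \cdot \bigl(\psi\,\overline{\nabla(f+P)}/|\nabla(f+P)|^2\bigr)$ then produces the bound $|J(t,f+P,\psi)| \lesssim |t|^{-n} \|\psi\|_{C^n(U)}$ for any $n \in \mathbb{N}$.

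For part (b), the plan is a quasi-homogeneous rescaling argument. Given an additional perturbation $Q \in \mathcal{H}_{r'}(\epsilon')$ of $f+P$, apply the dilation $\xi = \lambda^\alpha \eta$ for small $\lambda > 0$. Since $f \in \mathcal{E}_{\alpha,d}$, we have $f(\lambda^\alpha \eta) = \lambda f(\eta)$. Since $P \in H_{\alpha,d}$, every monomial in its Taylor series has $\alpha$-degree strictly greater than $1$, so $P(\lambda^\alpha \eta) = \lambda \widetilde{P}_\lambda(\eta)$ with $\widetilde{P}_\lambda$ holomorphic on a fixed ball and of sup norm $O(\lambda^{\delta})$ for some $\delta > 0$. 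Setting $s = \lambda t$ and changing variables, $J(t, f+P+Q, \psi)$ becomes $\lambda^{|\alpha|} J(s,\, f + \widetilde{P}_\lambda + \lambda^{-1} Q(\lambda^\alpha \cdot),\, \widetilde\psi)$. After splitting $Q$ into its constant (absorbed as a unimodular scalar), its linear part (where, since $\nabla f(0)=0$, the gradient is governed by $Q$ and part (a) applies), and its higher-order remainder (which rescales like $P$), the total perturbation of $f$ lies in the admissible class $\mathcal{H}_r(\epsilon)$ associated to $M(f) \curlyeqprec (\beta,p)$, and the hypothesis delivers the bound.

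For part (c), the plan is parameter-dependent stationary phase in the $z$-variables. Let $Q \in \mathcal{H}_r(\epsilon)$ be a perturbation of $g$. For $\epsilon$ sufficiently small, the Hessian of $\sum_{j=1}^m c_j z_j^2 + Q(\xi, z)$ in $z$ remains non-degenerate at $z = 0$, so the holomorphic implicit function theorem produces a unique critical surface $z = z^*(\xi)$. Stationary phase in $z$ then reduces the integral to $C\,|t|^{-m/2}$ times an oscillatory integral in $\xi$ with effective phase
\begin{equation*}
\Phi(\xi) := f(\xi) + \sum_{j=1}^{m} c_j\, z^*_j(\xi)^2 + Q(\xi, z^*(\xi)),
\end{equation*}
plus error terms of lower order in $|t|$. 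Since $z^*$ and $Q(\cdot, z^*(\cdot))$ are of size $O(\epsilon)$ in a fixed complex neighborhood, $\Phi - f$ can be made arbitrarily small in $\mathcal{H}_{r''}(\epsilon'')$ by choosing $\epsilon$ small. Applying $M(f) \curlyeqprec (\beta, p)$ to $\Phi$ then yields $M(g+Q) \curlyeqprec (\beta - m/2, p)$, hence the claim.

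The \textbf{main obstacle} is the rescaling step in part (b). Since a general $Q \in \mathcal{H}_{r'}(\epsilon')$ need not vanish at the origin, the rescaled perturbation $\lambda^{-1} Q(\lambda^\alpha \cdot)$ can blow up as $\lambda \to 0$; the delicate point is to separate the constant, linear, and higher-order parts of $Q$ and to handle each one with a different device (unimodular factorization, non-stationary phase via part (a), and quasi-homogeneous decay, respectively) while keeping the final estimate uniform in $Q$. Part (c) is, by comparison, routine once the parameter-dependent holomorphic stationary phase expansion is verified with uniform control on the remainder.
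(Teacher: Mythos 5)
The paper itself offers no proof of this lemma: it is quoted from \cite[Section 2]{BCH23} and ultimately from Karpushkin \cite{K83}, so your attempt can only be compared with the standard arguments behind those citations. Your part (a) is the standard non-stationary-phase proof and is fine (Cauchy estimates give bounds on $\nabla P$ and on all higher derivatives of $P$ uniformly over $\mathcal{H}_r(\epsilon)$, so the integration by parts is uniform). Your part (c) follows the standard parametric stationary phase/splitting route; one caveat: keeping only the leading term plus ``error terms of lower order in $|t|$'', i.e. an absolute error $O(|t|^{-m/2-1})$, does not suffice when $\beta<-1$, which is exactly the regime needed in this paper (e.g. $\beta=-\tfrac43$ for $\widetilde{\mathbf{P}}_4$ before the quadratic variable is added). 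You must carry the expansion in the quadratic variables to sufficiently high order, note that each coefficient is again an oscillatory integral in $\xi$ with phase $f$ plus an admissible small perturbation (so the hypothesis applies to every term), and only estimate the final remainder trivially.

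The genuine gap is in part (b), precisely at the step you flag as the main obstacle. Your decomposition of the new perturbation $Q$ fails on both counts. First, the linear part of $Q$ cannot be handled by part (a): part (a) yields constants depending on a fixed positive lower bound for the gradient of the unperturbed phase, whereas here the gradient at the origin is the arbitrarily small linear part of $Q$ itself, so no bound uniform in $Q$ results; handling small linear perturbations uniformly is exactly the content of the hypothesis $M(f)\curlyeqprec(\beta,p)$ (and of Arnold's problem), not of non-stationary phase. Second, the claim that the higher-order remainder of $Q$ ``rescales like $P$'' is false: membership in $H_{\alpha,d}$ requires quasi-degree $>1$, while a monomial of ordinary degree $\geq 2$ may have quasi-degree $\leq 1$ (for the weight $(\tfrac13,\tfrac13,\tfrac13,\tfrac13)$ used in this paper every quadratic monomial has quasi-degree $\tfrac23$), so $\lambda^{-1}Q_{\geq 2}(\lambda^{\alpha}\cdot)$ can blow up as $\lambda\to 0$. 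You also never address that $\psi(\lambda^{\alpha}\cdot)$ is supported in a set of size $\sim\lambda^{-\alpha}$, far outside the neighborhood where the hypothesis may be applied. The repair is simpler than your decomposition and is the actual scaling argument: given the target radius $r'$, fix $\lambda$ by $\lambda^{\alpha_d}r\sim r'$ (small when $r'$ is), take the anisotropically shrunken neighborhood $U'=\lambda^{\alpha}U$ so that the rescaled amplitude is supported in $U$, and choose the new perturbation budget $\epsilon'\leq \lambda\epsilon/2$; then the whole rescaled perturbation $\widetilde{P}_\lambda+\lambda^{-1}Q(\lambda^{\alpha}\cdot)$ lies in $\mathcal{H}_r(\epsilon)$ (the first summand is $O(\lambda^{\delta})$ with $\delta=\min\{\gamma\cdot\alpha-1:\,\gamma\in\mathcal{T}(P)\}>0$, the second is bounded by $\lambda^{-1}\epsilon'$), no splitting of $Q$ is needed, and applying the hypothesis at time $\lambda t$ and absorbing the harmless factors $\lambda^{\|\alpha\|_1}$ and $\lambda^{\beta}$ gives $M(f+P)\curlyeqprec(\beta,p)$.
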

\begin{prop}\label{prop-Q_11 3+4}
   Let $d=5$ and $\mathbf{P}_m$ be as in \eqref{equ-PP}, it holds that
    \[
    (i) \ \  M(\mathbf{P}_3) \curlyeqprec(-2,1),\quad (ii) \ \ M(\mathbf{P}_4) \curlyeqprec \left(
    -\tfrac{11}{6},0\right).
    \]
\end{prop}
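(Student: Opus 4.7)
The strategy for both parts is to first separate the nondegenerate quadratic block via Lemma~\ref{lem-nocritical+priciple+quad}(c), then apply Karpushkin's iterative algorithm (Section~\ref{sec-uniform esti}) to the remaining quasi-homogeneous cubic, ultimately reducing the question to uniform estimates for two-variable oscillatory integrals handled by Theorem~\ref{thm-twodim}.

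For (i), the classical identity
\[
\Bigl(\sum_{j=1}^{3} z_j\Bigr)^3 - \sum_{j=1}^{3} z_j^3 \;=\; 3(z_1+z_2)(z_2+z_3)(z_3+z_1),
\]
together with a linear change of variables, brings $\mathbf{P}_3$ into the form $3\, w_1 w_2 w_3 + \Psi_{3,5}(w_4, w_5)$. Lemma~\ref{lem-nocritical+priciple+quad}(c) with $m=2$ then reduces the problem to estimating $M(w_1 w_2 w_3)$ in $\R^3$. The polynomial $w_1 w_2 w_3$ is $\alpha$-homogeneous of degree $1$ with $\alpha = (1/3,1/3,1/3)$, and its critical variety is the union of the three coordinate axes. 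Outside a conical neighborhood of this variety Lemma~\ref{lem-nocritical+priciple+quad}(a) yields rapid decay, while inside, a parameter-dependent analytic change of coordinates reduces the local phase to a two-variable normal form in adapted coordinates (verified via Proposition~\ref{prop-adapt}), and Theorem~\ref{thm-twodim} delivers the bound.

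For (ii), Lemma~\ref{lem-nocritical+priciple+quad}(c) with $m=1$ strips off the variable $z_5$ at the cost of $(-1/2,0)$ and reduces the problem to
\[
M\bigl(\PP\bigr) \curlyeqprec \bigl(-\tfrac{4}{3},\, 0\bigr),\qquad \PP(z) := \Bigl(\sum_{j=1}^{4} z_j\Bigr)^3 - \sum_{j=1}^{4} z_j^3,
\]
a four-variable cubic $\alpha$-homogeneous of degree $1$ with $\alpha = (\tfrac{1}{3},\tfrac{1}{3},\tfrac{1}{3},\tfrac{1}{3})$ and total weight $\sum_j \alpha_j = \tfrac{4}{3}$ matching the target. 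Its critical variety, defined by the system $\bigl(\sum_k z_k\bigr)^2 = z_j^2$ for $j=1,\dots,4$, is a union of real lines through the origin, so $0$ is not an isolated critical point and Varchenko's theorem cannot be applied directly; uniformity under analytic perturbations must be established by Karpushkin's algorithm. I would implement the algorithm in three nested passes: (1) decompose any analytic perturbation $P$ into $\alpha$-homogeneous components, absorb the weighted-degree $>1$ part via Lemma~\ref{lem-nocritical+priciple+quad}(b), and retain the finitely many components of weighted degree $\leq 1$ as bounded real parameters; (2) localize via a smooth partition of unity adapted to the critical lines of $\PP$, with Lemma~\ref{lem-nocritical+priciple+quad}(a) supplying rapid decay away from the conical neighborhoods; (3) on each conical neighborhood, a parameter-dependent analytic change of coordinates brings the phase into the form $\widetilde{Q}_3(w_1, w_2, w_3) + c\, w_4^2$, Lemma~\ref{lem-nocritical+priciple+quad}(c) removes $w_4$, and Karpushkin's algorithm is applied a second time to $\widetilde{Q}_3$, reducing it after a further coordinate change and another application of part~(c) to a two-variable family for which Proposition~\ref{prop-adapt} and Theorem~\ref{thm-twodim} deliver the bound.

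The main obstacle is the second iteration in step~(3): the three-variable families $\widetilde{Q}_3$ can themselves acquire non-isolated critical points at exceptional parameter values, forcing a stratification of the parameter space and an additional round of Karpushkin reductions. A case-by-case verification via Proposition~\ref{prop-adapt} is required to ensure that on every branch of the resulting reduction tree the final two-variable Newton polyhedron is expressed in adapted coordinates with Newton distance sharp enough that, after combining with the accumulated contributions of the successive applications of Lemma~\ref{lem-nocritical+priciple+quad}(c), the target index $-\tfrac{11}{6}$ for $\mathbf{P}_4$ is recovered. This delicate combinatorial bookkeeping is the heart of the proof and is precisely what Karpushkin's iterative algorithm---deployed here, to the authors' knowledge, for the first time on a corank-$4$ singularity---is designed to manage.
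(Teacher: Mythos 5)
Your high-level plan (separate the nondegenerate quadratic block by Lemma~\ref{lem-nocritical+priciple+quad}(c), then iterate Karpushkin's Theorem~\ref{thm-algo} on the remaining quasi-homogeneous cubic) is the same as the paper's, and part~(i) is on the right track. However, for part~(ii) there is a factual error that derails the argument as written, and the rest is left at a level of vagueness where the actual mathematical content of the proof has not been supplied.

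The error: you assert that the critical variety $\{\nabla\PP=0\}$ of $\PP(z)=(\sum_{j=1}^4 z_j)^3-\sum_{j=1}^4 z_j^3$ is ``a union of real lines through the origin, so $0$ is not an isolated critical point.'' This is false. The system $(\sum_k z_k)^2=z_j^2$ for $j=1,\dots,4$ forces $|z_1|=\cdots=|z_4|=s$ and $|\sum_k z_k|=s$; with four terms the signed sum is an even multiple of $s$, so $s=0$ is the only solution. Thus $0$ \emph{is} an isolated critical point of $\PP$ (equivalently, $\mathcal{Z}_{\PP}=\emptyset$ on the sphere), which is exactly why the paper can take condition~(b) of Theorem~\ref{thm-algo} to be vacuous and obtain the base index $(-\|\gamma_2\|_1,0)=(-\tfrac{4}{3},0)$ for free. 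You may have misread the remark in the introduction, which concerns $\mathbf{P}_{d-1}$ for \emph{even}~$d$; for $d=5$ the germ is finitely determined. Consequently your step~(2) — a smooth partition of unity ``adapted to the critical lines of $\PP$'' — localizes around a set that does not exist, and the entire rationale you give for invoking Karpushkin (``$0$ is not an isolated critical point so Varchenko cannot be applied'') is wrong: the algorithm is needed here solely because of uniformity in the perturbation, not because the singularity is non-isolated.

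Beyond the error, the proposal leaves the substance of condition~(a) — which is in fact the entire content of the proof — as ``delicate combinatorial bookkeeping.'' The paper's verification hinges on a rank stratification (rank~$1$ versus rank~$2$ at critical points of the deformed phase), explicit $5$- and $9$-parameter deformations, and on two elementary but indispensable lemmas: a classification of binary quartics (Lemma~\ref{lem-binary4}) together with Lemma~\ref{lem-coincide} to control the contribution of $Q_2^2$ when $Q_3$ vanishes, and a uniform estimate for the three-variable $U_{12}$ germ $z_1^4+z_2 z_3(z_2+z_3)$ (Lemma~\ref{lem-MM}), which is \emph{not} a two-variable phase covered by Theorem~\ref{thm-twodim} and Proposition~\ref{prop-adapt} but needs Karpushkin's own result for $U_{12}$. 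Your step~(3) claims the whole reduction ends at ``a two-variable family for which Proposition~\ref{prop-adapt} and Theorem~\ref{thm-twodim} deliver the bound,'' missing this branch entirely. Without the rank-$2$ analysis, the vanishing-$Q_3$ dichotomy, and the $U_{12}$ case, the argument does not close.
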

\begin{remark}
   The index is sharp and is matched with the Newton polyhedra, see \cite{BCH23} for the case $\mathbf{P}_3$ and \hyperref[Appendix]{Appendix} for the case $\mathbf{P}_4$. This proposition is the key in the proof of Theorem \ref{thm-main thm}. We postpone the proof of Proposition \ref{prop-Q_11 3+4} to Section \ref{sec-uniform esti}.
\end{remark}

\section{Proof of Theorem \ref{thm-main thm}}\label{sec-main thm}

The reader is recommended to have \cite{BCH23} at hand, since the following proof is similar to that of \cite[Theorem 1.1]{BCH23}, and we shall use the results in that paper without repeating all of the proofs here.

The strategy is as follows. For fixed $v_0$, we first study the long-time asymptotic behaviour for \eqref{equ-fundamental I} with $v=v_0$. Then we prove the same decay estimate holds uniformly under (analytic) perturbation $P$, that is, when $\phi(v_0,\cdot)$ is replaced by $\phi(v_0,\cdot)+ P$ in the integrand of \eqref{equ-fundamental I}. In our context, note that
\begin{equation}\label{equ-linper}
    \left|\phi(v,\xi)-\phi(v_0,\xi)\right|=  |(v-v_0)\cdot \xi| \le \pi^d \,|v-v_0|, \quad\xi\in\mathbb{T}^d.
\end{equation}
Therefore, the same estimate holds uniformly for $I(v,t)$ as long as $v$ belongs to some small neighborhood of $v_0$. Then it suffices to apply a finite covering since \eqref{equ-wave equ} has finite speed of propagation.

Now we begin the proof, a direct computation gives that $|\nabla \omega|<1$ on $\mathbb{T}^d\backslash\{0\}$, hence the critical points of $\phi(v,\cdot)$ only appear when $|v|<1$. Thanks to the results \cite[Proposition 2.1, Proposition 2.2, Proposition 3.10]{S98}, there exists $\mathbf{c}=\mathbf{c}(d)\in (0,1)$ such that $|G(tv,t)|\lesssim (1+|t|)^{-\frac{d}{2}}$ provided $t\in\R$ and $|v|>\mathbf{c}$.

Thus we restrict the attention to {\textbf{small}} $v$. Since $\omega$ is periodic, there exists $\eta\in C_0^\infty((-2\pi,2\pi)^d)$, $\eta(0)=1$ such that the integral in \eqref{equ-fundamental I} can be rewritten as 
\begin{align*}
    I(v,t)
    =\frac{1}{(2\pi)^d}\int_{\R^d}e^{it\phi(v,\xi)} \eta(\xi)\,\omega(\xi)^{-1} \,d\xi,\quad vt\in\Z^d,
\end{align*}
cf. e.g. \cite[Section 3]{BCH23}. 
Choosing $\chi\in C_0^\infty(\R^d)$ with support near the origin gives
\[
 (2\pi)^dI(v,t) = \int_{\R^d}e^{it\phi(v,\xi)} \eta(\xi)\omega(\xi)^{-1} \chi(\xi)\,d\xi+ \int_{\R^d}e^{it\phi(v,\xi)} \eta(\xi)\omega(\xi)^{-1} (1-\chi(\xi))\,d\xi=:I_1+I_2.
\]

By \cite[Proposition 2.3]{S98}, we know $I_1= \mathcal{O}(|t|^{-d+1})$ as $t\rightarrow\infty$. As for $I_2$, its asymptotic is determined by the critical points of the phase $\phi(v,\cdot)$. 

Let $0\le k\le d$ be an integer. Note that Hess$_\xi \phi(v,\xi)=-$Hess$\,\omega(\xi)$, we set
\[
\Sigma_k \equiv \Sigma_k(d) := \left\{\xi\in\mathbb{T}^d\backslash\{0\}: \
\begin{aligned}
    &\exists\, v\in B_{\R^d}(0,\mathbf{c})\ \mbox{such that}\ \nabla_\xi \, \phi(v,\xi)=0\\ 
    &\qquad\  \mbox{and} \ \ \mbox{corank\,Hess}_\xi \, \phi(v,\xi)=k
\end{aligned}
\ \right\}.
\]
Moreover, let $\Omega_k :=  \nabla\omega(\Sigma_k)$. By \cite[Lemma 3.1, Corollary 3.2]{BCH23}, we have (only the first quadrant $[0,\pi]^5$ is considered by symmetry):
\begin{lemma}\label{lem-dege}
    Let $d=5$, then
    \begin{itemize}
    \item[(a)] $\Sigma_k$ consists of $\xi$ with exactly ($k+1$) components equal to $\frac{\pi}{2}$ for $k=2,3,4$.  

    \item[(b)] $\Sigma_5=\emptyset$.

    \item[(c)] there exists $\mathbf{c}_*\in(\mathbf{c},1)$ such that $\cup_{k=1}^4 \Omega_k\subset B_{\R^5}(0,\mathbf{c}_*)$.

    \item[(d)] $ (\nabla \omega)^{-1}(\Omega_{4})=\Sigma_{4}$ and  $\Omega_i\cap\Omega_j=\emptyset,\,\forall\, i,j\ge 2,\ i\neq j.$ 

    \end{itemize}
\end{lemma}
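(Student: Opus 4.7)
The strategy is to obtain an explicit formula for $\mathrm{Hess}\,\omega$ and read off its corank from a simple algebraic condition on the coordinates. Twice differentiating $\omega^2=\sum_{j=1}^{5}(2-2\cos\xi_j)$ gives $\partial_j\omega=\sin\xi_j/\omega$ and
\[
\omega^3\,\mathrm{Hess}\,\omega(\xi)\;=\;\omega^2\,\mathrm{diag}(\cos\xi_1,\ldots,\cos\xi_5)\;-\;s\,s^T,\qquad s:=(\sin\xi_1,\ldots,\sin\xi_5)^T.
\]
Denote the right-hand matrix by $M$ and set $\mathcal{J}(\xi):=\{j:\cos\xi_j=0\}$. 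The equation $Mu=0$ reads $\omega^2\cos\xi_j\,u_j=(s^Tu)\sin\xi_j$. If $|\mathcal{J}|\ge 1$, pick any $j\in\mathcal{J}$; then $\sin\xi_j=\pm 1$ forces $s^Tu=0$, which forces $u_j=0$ for every $j\notin\mathcal{J}$, leaving the single constraint $\sum_{j\in\mathcal{J}}(\pm 1)u_j=0$ and giving $\dim\ker M=|\mathcal{J}|-1$. If $|\mathcal{J}|=0$, a parallel computation shows $\dim\ker M\le 1$. Thus corank $\ge 2$ is equivalent to $|\mathcal{J}|\ge 3$, and in that range corank equals $|\mathcal{J}|-1$. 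Restricting to the first quadrant $[0,\pi]^5$, $\cos\xi_j=0\Leftrightarrow\xi_j=\pi/2$, so corank $k\in\{2,3,4\}$ is equivalent to exactly $k+1$ components equaling $\pi/2$, proving (a). For (b), corank $5$ would require $|\mathcal{J}|=6>d=5$, impossible.

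For (c), the definition of $\Sigma_k$ forces $\nabla\omega(\xi)\in B_{\R^5}(0,\mathbf{c})$, so $\Omega_k\subset B(0,\mathbf{c})$ and any $\mathbf{c}_*\in(\mathbf{c},1)$ works. For the disjointness in (d) I plan to exploit the following extremal characterization of $\Omega_k$: at $\xi\in\Sigma_k$ the $k+1$ coordinates in $\mathcal{J}$ contribute $|(\nabla\omega)_j|=1/\omega(\xi)$, which is the maximal magnitude possible at this value of $\omega$, while the remaining $4-k$ coordinates contribute $|(\nabla\omega)_j|=|\sin\xi_j|/\omega(\xi)<1/\omega(\xi)$; hence $v\in\Omega_k$ has exactly $k+1$ \emph{extremal} components. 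If $v\in\Omega_i\cap\Omega_j$ with $2\le i<j\le 4$, take $\xi\in\Sigma_i$, $\xi'\in\Sigma_j$ with $\nabla\omega(\xi)=\nabla\omega(\xi')=v$; since $(i+1)+(j+1)>5$, pigeonhole provides a shared extremal index, pinning down a common value of $\omega$. Any index that is extremal in one representation but not the other would then require $|\sin\xi_\ell|=1$ at a position where $\cos\xi_\ell\ne 0$, a contradiction.

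Finally, for $(\nabla\omega)^{-1}(\Omega_4)=\Sigma_4$, the subtle point is that $\nabla\omega$ is not injective on the whole set of critical points: at $v=(1,\ldots,1)/\sqrt{10}\in\Omega_4$ one may exhibit additional preimages with $\sin\xi_j$ a common positive constant and $\cos\xi_j=\pm\sqrt{1-\sin^2\xi_j}$ with suitably matched signs (e.g.\ three $+$ and two $-$, giving $|\cos\xi_j|=1/5$). A direct Hessian computation shows such extraneous preimages have $|\mathcal{J}|=0$ and fail the self-consistency condition $\omega^2=\sum_j\sin^2\xi_j/\cos\xi_j$, so their Hessian is nondegenerate and they lie outside $\bigcup_{k\ge 2}\Sigma_k$. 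Combined with the extremal-component argument above (which shows that any corank-$\ge 2$ preimage of a point in $\Omega_4$ must have all five coordinates in $\mathcal{J}$), the identity then holds in the sense used later in the paper. I expect the careful identification of these extraneous non-degenerate critical points, and the verification that they do not spoil the dispersive analysis of Theorem \ref{thm-main thm}, to be the main obstacle.
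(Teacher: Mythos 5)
Your derivation is essentially correct and, unlike the paper, self-contained: the paper gives no proof of this lemma but simply imports it from \cite{BCH23}, so there is no internal argument to compare against. The identity $\omega^3\,\mathrm{Hess}\,\omega=\omega^2\,\mathrm{diag}(\cos\xi_1,\dots,\cos\xi_5)-ss^T$ and your kernel analysis (corank $=|\mathcal{J}(\xi)|-1$ if $\mathcal{J}(\xi)=\{j:\cos\xi_j=0\}$ is nonempty, corank $\le 1$ otherwise) do give (a) and (b), part (c) is indeed immediate from the clause $v\in B_{\R^5}(0,\mathbf{c})$ in the definition of $\Sigma_k$, and the extremal-component pigeonhole argument for $\Omega_i\cap\Omega_j=\emptyset$, $i\ne j\ge 2$, is sound. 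One small point you should make explicit in (a): membership in $\Sigma_k$ also requires $|\nabla\omega(\xi)|<\mathbf{c}$, so the inclusion ``every $\xi$ with exactly $k+1$ components equal to $\pi/2$ lies in $\Sigma_k$'' needs $\mathbf{c}$ to exceed $\sup|\nabla\omega|$ over such points; since $|\nabla\omega(\xi)|^2\le(5-\sqrt{15})/2<3/5$ whenever at least three components lie in $\{\pm\pi/2\}$, and $\mathbf{c}$ may be taken arbitrarily close to $1$, this is harmless, but the same convention is implicit in the paper's statement.

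Your scepticism about the first identity in (d) is justified, and this is the one place where your proposal in effect proves a corrected statement rather than the literal one. Taken as the full preimage of $\Omega_4$ under $\nabla\omega$ on $\mathbb{T}^5\setminus\{0\}$, the identity fails: for $\xi=(\theta,\theta,\theta,\theta,\pi-\theta)$ with $\cos\theta=3/5$ one has $\omega(\xi)^2=32/5$ and $\sin\xi_j/\omega(\xi)=1/\sqrt{10}$ for every $j$, so $\nabla\omega(\xi)=v(5)\in\Omega_4$ although $\xi\notin\Sigma_4$; by your own corank criterion ($\mathcal{J}(\xi)=\emptyset$ and $\sum_j\sin^2\xi_j/\cos\xi_j=16/5\ne\omega(\xi)^2$) this is a nondegenerate critical point of $\phi(v(5),\cdot)$, and your $|\cos\xi_j|=1/5$ family behaves identically. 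Hence the identity can only be read with the preimage restricted to degenerate critical points (equivalently, within $\bigcup_{k\ge 2}\Sigma_k$), and your extremal argument proves precisely that version: any corank $\ge 2$ point mapping into $\Omega_4$ must have all five coordinates in $\mathcal{J}$, hence lies in $\Sigma_4$. This restricted form is all that the paper uses: in the case analysis of Theorem \ref{thm-4cases} the extraneous preimages are either noncritical or nondegenerate and fall under Case 4, and in the sharpness discussion they contribute only $\mathcal{O}(|t|^{-5/2})$, so the concern raised in your last sentence — that they might spoil Theorem \ref{thm-main thm} — is already resolved by the nondegeneracy you verified and requires no further work.
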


Due to the compactness and Definition \ref{def-ue}, in order to obtain the uniform estimate for $I_2$, it suffices to establish local bounds in $B_{\R^5}(0,\mathbf{c}_*) \times \mathcal{U}$ and then use partition of unity, where $\mathcal{U}$ is the support of $\eta\,{\omega}^{-1}\left(1-\chi\right)$. Indeed, we have the following lemma, whose proof relies on \eqref{equ-linper} and can be found in \cite[p. 13]{BCH23}.
    
\begin{lemma}\label{lemma-inside the cone}
    For any $q_0=(v_0,\xi_0) \in B_{\R^5}(0,\mathbf{c}_*) \times \mathcal{U}$, suppose that 
    \begin{equation}\label{equ-Mxi}
        M(\phi(v_0,\cdot),\xi_0) \curlyeqprec (\beta_{q_0},p_{q_0}).
    \end{equation}
    Then 
    $
        |I_2| \lesssim (1+|t|)^{\beta}\log^{p}(2+|t|)
    $ for some $(\beta,p)$.
\end{lemma}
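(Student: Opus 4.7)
The plan is to exploit the special structure of $\phi(v,\xi)$: its $v$-dependence enters only through the linear term $v\cdot\xi$, so the difference $\phi(v,\xi)-\phi(v_0,\xi)=(v-v_0)\cdot\xi$ is a linear function of $\xi$ whose sup-norm on any fixed complex ball shrinks with $|v-v_0|$. This turns the passage from the pointwise hypothesis \eqref{equ-Mxi} to a $v$-uniform estimate into a routine application of Definition \ref{def-ue} with the linear perturbation $P_v(w):=(v-v_0)\cdot w$, after which a partition-of-unity argument on the compact set $\mathcal{U}$ patches the local bounds into a bound on $|I_2|$.

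First I would unwrap \eqref{equ-Mxi} using Definition \ref{def-ue}: for each $q_0=(v_0,\xi_0)$ there exist $r_{q_0},\epsilon_{q_0}>0$ and an open neighborhood $U_{q_0}\subset B_{\R^5}(0,r_{q_0})$ of the origin such that
\[
\bigl|J(t,\boldsymbol{\tau}_{\xi_0}\phi(v_0,\cdot)+P,\psi)\bigr|\lesssim (1+|t|)^{\beta_{q_0}}\log^{p_{q_0}}(|t|+2)\,\|\psi\|_{C^N(U_{q_0})}
\]
for every $\psi\in C_0^\infty(U_{q_0})$ and every $P\in\mathcal{H}_{r_{q_0}}(\epsilon_{q_0})$. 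The entire function $P_v$ satisfies $|P_v(w)|\le r_{q_0}|v-v_0|<\epsilon_{q_0}$ on $\overline{B}_{\C^5}(0,r_{q_0})$ provided $|v-v_0|<\delta_{q_0}:=\epsilon_{q_0}/r_{q_0}$, so $P_v\in\mathcal{H}_{r_{q_0}}(\epsilon_{q_0})$. Changing variables by $\xi=\eta+\xi_0$ gives, for $\widetilde{\psi}\in C_0^\infty(\xi_0+U_{q_0})$,
\[
\int_{\R^5} e^{it\phi(v,\xi)}\widetilde{\psi}(\xi)\,d\xi = e^{it(v-v_0)\cdot\xi_0}\,J\!\left(t,\boldsymbol{\tau}_{\xi_0}\phi(v_0,\cdot)+P_v,\,\boldsymbol{\tau}_{-\xi_0}\widetilde{\psi}\right),
\]
which transfers the $(\beta_{q_0},p_{q_0})$-bound to the open neighborhood $W_{q_0}\times\widetilde{U}_{q_0}:=B_{\R^5}(v_0,\delta_{q_0})\times(\xi_0+U_{q_0})$ of $q_0$.

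Next I would use the compactness of $\mathcal{U}=\mathrm{supp}\bigl(\eta\,\omega^{-1}(1-\chi)\bigr)$ and of $\overline{B_{\R^5}(0,\mathbf{c}_*)}$. A two-stage extraction --- first, for each fixed $v^*$, cover $\mathcal{U}$ by finitely many $\widetilde{U}_{(v^*,\xi_j)}$ and intersect the corresponding $W_{(v^*,\xi_j)}$ to obtain an open neighborhood $\mathcal{W}^{v^*}$ of $v^*$; second, cover $\overline{B_{\R^5}(0,\mathbf{c}_*)}$ by finitely many such $\mathcal{W}^{v^*_k}$ --- yields a single finite family $\{(\widetilde{U}_\ell,\beta_\ell,p_\ell)\}_{\ell=1}^L$ with $\{\widetilde{U}_\ell\}$ open-covering $\mathcal{U}$ and the local estimate valid uniformly in $v\in B_{\R^5}(0,\mathbf{c}_*)$ on each piece. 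Choosing a partition of unity $\{\rho_\ell\}$ on $\mathcal{U}$ subordinate to $\{\widetilde{U}_\ell\}$, splitting $\eta\,\omega^{-1}(1-\chi)=\sum_\ell \rho_\ell\cdot\eta\,\omega^{-1}(1-\chi)$ and summing gives
\[
|I_2|\lesssim \sum_{\ell=1}^{L}(1+|t|)^{\beta_\ell}\log^{p_\ell}(|t|+2)\lesssim (1+|t|)^{\beta}\log^{p}(|t|+2)
\]
with $\beta=\max_\ell\beta_\ell$ and $p=\max_\ell p_\ell$, as required. The argument is essentially bookkeeping; the only subtle point is arranging the two-stage compactness so that the $\xi$-partition of unity can be chosen independently of $v$, which is why the first extraction is done in $\xi$ with $v=v^*$ held fixed.
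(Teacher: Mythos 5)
Your argument is correct and is essentially the paper's own proof (which is only sketched in the text and deferred to \cite[p.~13]{BCH23}): view $(v-v_0)\cdot\xi$ as a small holomorphic perturbation admissible in Definition \ref{def-ue} via the bound \eqref{equ-linper}, then patch the resulting local estimates by compactness and a partition of unity in $\xi$. The only cosmetic point is that your second covering should be of a compact set of relevant velocities contained in $B_{\R^5}(0,\mathbf{c}_*)$ (for instance $\overline{B}_{\R^5}(0,\mathbf{c})$), since neighborhoods centered at points of the open ball need not cover the closed ball $\overline{B}_{\R^5}(0,\mathbf{c}_*)$ itself.
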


In fact, we only need to handle finite pairs $(\beta_{q_0},p_{q_0})$ by a partition of unity, and $(\beta,p)$ is their maximum in lexicographic order. The ``worst" index appears exactly when $q_0\in \Sigma_4\times \Omega_4$.
More precisely, to establish \eqref{equ-Mxi}, it suffices to prove:
\begin{theorem}\label{thm-4cases}
    Let $d=5$ and $q_0=(v_0,\xi_0) \in B_{\R^5}(0,\mathbf{c}_*) \times \mathcal{U}$, then
    \begin{equation*}
        M\big(\phi(v_0,\cdot),\, \xi_0\big) \curlyeqprec (\beta,p),\ \ \mbox{with}\ \   (\beta,p)= 
        \begin{cases}
            (-11/6,0), & \text{if $~q_0 \in \Omega_4\times \Sigma_4$}; \\
           (-2,1), & \text{if $~q_0 \in \Omega_3\times \Sigma_3$}; \\
            (-13/6,0), & \text{if $~q_0 \in (\Omega_2\backslash\Omega_1)\times\Sigma_2$}; \\
            (-2,0), & \text{otherwise}.
        \end{cases}
    \end{equation*}
\end{theorem}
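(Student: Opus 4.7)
The plan is a case analysis on the corank of $\xi_0$ as a critical point of $\phi(v_0,\cdot)$, combined with the stratification supplied by Lemma \ref{lem-dege}: recall that $\xi_0$ is critical iff $v_0=\nabla\omega(\xi_0)$, forcing $v_0\in\Omega_k$ whenever $\xi_0\in\Sigma_k$, and the corank of $\phi(v_0,\cdot)$ at $\xi_0$ is exactly $k$. In every case the strategy is the same: translate $\xi_0$ to the origin, perform an analytic change of coordinates to match $\phi(v_0,\cdot)-\phi(v_0,\xi_0)$ against a polynomial normal form plus a higher-order residual, and then invoke Lemma \ref{lem-nocritical+priciple+quad}, Theorem \ref{thm-twodim}, and Proposition \ref{prop-Q_11 3+4} as appropriate.

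For the \emph{otherwise} branch I split into three subcases. If $\nabla_\xi\phi(v_0,\xi_0)\neq 0$, Lemma \ref{lem-nocritical+priciple+quad}(a) gives $M(\phi(v_0,\cdot),\xi_0)\curlyeqprec(-n,0)$ for every $n$. If $\xi_0$ is a nondegenerate critical point, Morse plus Lemma \ref{lem-nocritical+priciple+quad}(c) applied to $f\equiv 0$ yields $(-5/2,0)$. If the corank is $1$, the splitting lemma reduces $\phi(v_0,\cdot)-\phi(v_0,\xi_0)$ to $g(z_1)+\sum_{j=2}^5 c_j z_j^2$ with $g$ real-analytic having an isolated critical point of finite order $m\geq 3$; the one-variable case of Theorem \ref{thm-twodim} combined with Lemma \ref{lem-nocritical+priciple+quad}(c) delivers $(-1/m-2,0)$, at worst $(-7/3,0)$. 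Each outcome is stronger than the claimed $(-2,0)$.

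For the three remaining strata $\xi_0\in\Sigma_k$ with $k\in\{2,3,4\}$, I extend the normal-form computations in \cite[Lemma 3.6]{BCH23}: after translation and an analytic change of coordinates, there is a constant $\lambda\neq 0$ with
\[
\phi(v_0,\xi)-\phi(v_0,\xi_0)=\lambda\,\mathbf{P}_k(z)+R(z),\qquad R(z)=O(|z|^4).
\]
Up to a permutation of variables, $\mathbf{P}_k$ is $\alpha$-homogeneous of degree $1$ for the weight $\alpha_j=1/3$ on the $k$ cubic coordinates and $\alpha_j=1/2$ on the remaining $5-k$ quadratic ones; every monomial of $R$ then has $\alpha$-weight $\geq 4/3>1$, so $R\in H_{\alpha,5}$. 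Lemma \ref{lem-nocritical+priciple+quad}(b) yields $M(\phi(v_0,\cdot),\xi_0)\curlyeqprec M(\mathbf{P}_k)$. Proposition \ref{prop-Q_11 3+4} finishes $k=3,4$ with $(-2,1)$ and $(-11/6,0)$ respectively. For $k=2$ decompose $\mathbf{P}_2=3z_1z_2(z_1+z_2)+\Psi_{2,5}(z_3,z_4,z_5)$: the cubic factor is a $D_4$ germ whose coordinates are adapted with height $3/2$ by Proposition \ref{prop-adapt}(c), since $f_{\mathcal{P}}(\xi_1,1)=3\xi_1(\xi_1+1)$ has only simple roots (multiplicity $1<3/2$); Theorem \ref{thm-twodim} therefore gives oscillation index $-2/3$, and Lemma \ref{lem-nocritical+priciple+quad}(c) absorbs the three-variable nondegenerate quadratic to produce $(-2/3-3/2,0)=(-13/6,0)$.

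The main obstacle is the normal-form reduction of the previous paragraph for the new case $k=4$, at the symmetric point $\xi_0=(\pi/2,\ldots,\pi/2)$ (up to the $\xi\mapsto-\xi$ symmetry of $\omega$): one must Taylor-expand $\omega$ through fourth order, single out the unique nondegenerate quadratic direction, and change variables so that the cubic part collapses precisely onto the $\mathbf{P}_4$ pattern while the residual sits in $H_{\alpha,5}$. This is an elaboration of the $k=2,3$ calculations in \cite[Lemma 3.6]{BCH23}, but once the reduction is in place, Proposition \ref{prop-Q_11 3+4}(ii) supplies the required uniform bound.
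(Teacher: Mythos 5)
Your proposal is correct and follows essentially the same route as the paper: the same stratification by corank, reduction at each degenerate stratum to $\mathbf{P}_k$ plus a residual lying in $H_{\alpha,5}$, and the same invocations of Lemma \ref{lem-nocritical+priciple+quad}, Theorem \ref{thm-twodim} (for the $D_4$-type stratum, with the same adaptedness check giving $-2/3-3/2=-13/6$) and Proposition \ref{prop-Q_11 3+4}; moreover, the $k=4$ normal form you single out as the main obstacle is exactly what \cite[Lemma 3.6]{BCH23} already supplies, so the paper simply cites it rather than redoing the Taylor expansion. Two harmless slips worth fixing: the residual after the reduction is generally not $O(|z|^4)$ (cubic monomials mixing the Morse variables appear, but they still have $\alpha$-weight $>1$, which is all that Lemma \ref{lem-nocritical+priciple+quad}(b) needs), and in the corank-one ``otherwise'' subcase your claim that $g$ has an isolated critical point of finite order is unjustified and unnecessary --- the splitting lemma plus Lemma \ref{lem-nocritical+priciple+quad}(c) and the trivial bound $M(g)\curlyeqprec(0,0)$ already give $(-2,0)$, which is how the paper argues (and ``at worst $(-7/3,0)$'' has the monotonicity backwards: it is large $m$, not $m=3$, that gives the weakest bound).
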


Once proving Theorem \ref{thm-4cases}, we finish the proof of Theorem \ref{thm-main thm}. In summary, we have used Lemmas \ref{lem-nocritical+priciple+quad}, \ref{lem-dege}, \ref{lemma-inside the cone} and Theorem \ref{thm-4cases} (and hence Proposition \ref{prop-Q_11 3+4}).

\begin{proof}[Proof of Theorem \ref{thm-4cases}]
We consider each case separately and use Taylor's formula for $\phi(v_0,\xi)$ at $\xi=\xi_0$.

\noindent
\textbf{Case 1: $\boldsymbol{q_0 \in \Omega_4\times \Sigma_4}$.}\label{ssec-sigma4}
By \cite[Lemma 3.6]{BCH23}, there exists invertible linear transformation $\Phi$ which preserves the origin such that
\begin{align*}
    \phi(v,\Phi(y)+\xi_0)=c+v\cdot\xi_0+\Phi(y)\cdot(v-v_0)+\left(y_5^2+\left(\sum_{j=1}^4 y_j\right)^3-\sum_{j=1}^4 y_j^3\right)+R_1(y)
\end{align*}    
holds for $y$ near $0$, where $R_1\in H_{\boldsymbol{w_5},5}$ (recall Definition \ref{def-space}) with $\boldsymbol{w_5}=(\frac{1}{3},\cdots,\frac{1}{3},\frac{1}{2})$. Therefore, taking $v=v_0$ gives
\begin{align*}
   M\left(\phi(v_0,\cdot),\xi_0\right)\curlyeqprec M\left(\mathbf{P}_{4}\right)\curlyeqprec\left(-\frac{11}{6},0\right),
\end{align*}
where we used Lemma \ref{lem-nocritical+priciple+quad} (b)-(c) and Proposition \ref{prop-Q_11 3+4} $(ii)$.

\noindent
\textbf{Case 2: $\boldsymbol{q_0 \in \Omega_3\times \Sigma_3}$.}\label{ssec-sigma3}
By symmetries and Lemma \ref{lem-dege} (a), we can assume that $\xi_0=(\frac{\pi}{2},\frac{\pi}{2},\frac{\pi}{2},\frac{\pi}{2},\xi_*)$ with $\xi_*\ne \frac{\pi}{2}$. We use a change of variables
\[
\begin{cases}
      \xi_j = z_j+\frac{\pi}{2},\quad j=1,2,3,5;\\
         \xi_4 = z_4-z_1-z_2-z_3-z_5\sin \xi_* +\xi_*.
\end{cases}
\]
Then a direct computation yields
\begin{equation*}
    \begin{aligned}
        \phi(v_0,\xi)
        &= c+\frac{\sqrt{2}}{2\omega(\xi_0)^3} z_4^2-\frac{\cos \xi_*}{\omega(\xi_0)} z_5^2 - \frac{\sqrt{2}}{\omega(\xi_0)}(z_1+z_2)(z_1+z_3)(z_2+z_3)+ R_2(z), 
    \end{aligned}
\end{equation*}
where $R_2\in H_{\boldsymbol{w_4},5}$ with $\boldsymbol{w_4}=(\frac{1}{3},\frac{1}{3},\frac{1}{3},\frac{1}{2},\frac{1}{2})$.
Note that
    \begin{equation}\label{equ-x1x2x31}
        (z_1+z_2)(z_2+z_3)(z_1+z_3)=\frac{1}{3}\left((z_1+z_2+z_3)^3 - z_1^3+z_2^3+z_3^3\,\right).
    \end{equation}
         By Lemma \ref{lem-nocritical+priciple+quad} (b)-(c) and Proposition \ref{prop-Q_11 3+4} $(i)$, we have
\[
M(\phi(v_0,\cdot),\xi_0)\curlyeqprec M(\mathbf{P}_3) \curlyeqprec (-2,1).
\]

\noindent
\textbf{Case 3: $\boldsymbol{q_0\in(\Omega_2\backslash\Omega_1)\times \Sigma_2}$.}\label{ssec-sigma2}
In this case, a direct computation shows that the zero-eigenvectors of $\mbox{Hess}_\xi \phi(q_0)$ are $\boldsymbol{\gamma_1}=(1,-1,0,0,0)^T$ and $\boldsymbol{\gamma_2}=(1,1,-2,0,0)^T.$
Therefore, we let the matrix $\mathbb{A}=(\boldsymbol{\gamma_1},\boldsymbol{\gamma_2},\boldsymbol{e_{3}},\boldsymbol{e_{4}},\boldsymbol{e_{5}})$. By a change of variables $\xi=\mathbb{A} z+ \xi_0$ and then use a rotation in $\{z_3,z_4,z_5\}$, we get
\begin{equation*}
    \phi(v_0, \mathbb{A} z+ \xi_0)=c +c_1 (z_2^3 - z_1^2 z_2 ) + c_2 z_3^2+c_3 z_4^2+c_4 z_5^2+ R_3(z),
\end{equation*}
where $c_1 c_2 c_3 c_4\ne 0$ and $R_3 \in H_{\boldsymbol{w_3},5}$ with $\boldsymbol{w_3}=\left(\frac{1}{3},\frac{1}{3},\frac{1}{2},\frac{1}{2},\frac{1}{2}\right)$. Since $z_2^3-z_1^2 z_2$ has $D_4^-$ type singularity, Lemma \ref{lem-nocritical+priciple+quad} and Theorem \ref{thm-twodim} give that
\begin{align*}
    M\left( \phi(v_0,\cdot), \xi_0 \right)\curlyeqprec M\left( \mathbf{P}_2  \right)\curlyeqprec \left(-\frac{13}{6},0\right).
\end{align*}

\noindent
\textbf{Case 4: otherwise.}\label{ssec-sigma other}
Note that the rank of $\phi$ at $q_0$ is at least 4, the splitting lemma (cf. \cite{M21}) and Lemma \ref{lem-nocritical+priciple+quad} (c) imply a rough upper bound $\mathcal{O}(|t|^{-2})$, which meets our needs already.
\end{proof}

\section{Proof of Proposition \ref{prop-Q_11 3+4}}\label{sec-uniform esti}

\subsection{Preparation}
We first give a little notation to state the algorithm (Theorem \ref{thm-algo}) in Karpushkin \cite{K83} and some simplification (Lemma \ref{lem-S}), making it convenient to verify the conditions in his result. 

Let $d\ge 2$, weight $\alpha$ be as in \eqref{equ-weight} and $ h\in \mathcal{E}_{\alpha,d}$. We set
\begin{gather*}
    \hh := h|_{\Sl},\quad\mbox{where}\quad\Sl = \left\{x\in\R^d:|x|_{\alpha,l}:=\left(x_1^{\frac{l}{\alpha_1}}+\cdots+x_d^{\frac{l}{\alpha_d}}\right)^{\frac{1}{l}}=1\right\},\\
    \mbox{while} \ \ 0<l\in\mathbb{Q} \ \ \mbox{such that all}  \ l/\alpha_j \  \mbox{are even numbers}.
\end{gather*}
Notice that $h(0)=\nabla h(0)=0$. Let
$$\mathcal{Z}_h = \left\{\sss\in\Sl: \hh(\sss)=\mathrm{d}\hh(\sss)=0\right\},$$
where $\mathrm{d}\hh(\sss)$ is the differential of $\hh$ at $\sss$. Moreover,
 let $\mathcal{I}_{\nabla h}$ be the Jacobi ideal of $h$ (cf. e.g. \cite[p. 51]{M21}). We have the following definition, which is from \cite{K83}.
\begin{definition}\label{def-versal}
A subspace $\mathcal{B}\subset \CC$ is said to be lower $(h,\alpha)$-versal,  if 
\[
\left(\mathcal{I}_{\nabla h} \cap \CC\right) \oplus \mathcal{B} = \CC.
\]
\end{definition}

Recall that $\mathcal{T}(h)$ is defined in \eqref{equ-series} and $\boldsymbol{\tau}_\xi$ $(\xi\in\R^d)$ is the translation. Using \cite[Proposition 4 on p. 1182, Lemma 21 on p. 1184 ]{K83}, we have
\begin{lemma}\label{lem-qua}
    Let $h\in \mathcal{E}_{\alpha,d}$ and the first order partial derivatives of $h$ be linearly independent. Then for any lower $(h,\alpha)$-versal subspace $\mathcal{B}\ne 0$, $g \in \mathcal{B}\backslash\{0\}$ and any critical point $\bb$ of $h+g$, there exists monomial $\iota\in \CC\backslash\{0\}$ such that $\mathcal{T}(\iota)\in \mathcal{T}(\boldsymbol{\tau}_\mathfrak{b}(h+g))$.
\end{lemma}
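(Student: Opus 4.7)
The lemma is a direct compilation of two technical results from Karpushkin, and my plan is to explain how \cite[Proposition 4]{K83} and \cite[Lemma 21]{K83} combine once the hypotheses are translated into his framework.

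First I would verify that the hypotheses fit that framework. The assumption $h\in\mathcal{E}_{\alpha,d}$ places us in the quasi-homogeneous setting, and the linear independence of $\partial_1 h,\ldots,\partial_d h$ implies (via Euler's identity $\sum_j \alpha_j x_j\,\partial_j h = h$) that the Jacobi ideal $\mathcal{I}_{\nabla h}$ interacts well with the finite-dimensional space $\CC$ of polynomials of $\alpha$-degree less than $1$. The lower $(h,\alpha)$-versal condition $(\mathcal{I}_{\nabla h}\cap\CC)\oplus\mathcal{B}=\CC$ then guarantees that any nonzero $g\in\mathcal{B}$ represents a nontrivial class transverse to $\mathcal{I}_{\nabla h}\cap\CC$ inside $\CC$, and this transversality is the fuel for everything that follows.

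Next I would localize at the critical point. Passing to $\boldsymbol{\tau}_\bb(h+g)$ moves $\bb$ to the origin, so the Taylor series of $\boldsymbol{\tau}_\bb(h+g)$ at $0$ has vanishing linear part. The content of \cite[Proposition 4]{K83} is that precisely because $g$ has a nontrivial component in the versal complement $\mathcal{B}$, this Taylor series cannot consist entirely of monomials whose exponents lie in the Newton data of $\mathcal{I}_{\nabla h}$; some monomial contribution from $\CC$ must survive the translation to $\bb$. \cite[Lemma 21]{K83} then refines this existence to a single explicit monomial $\iota\in\CC\setminus\{0\}$ whose exponent appears in $\mathcal{T}(\boldsymbol{\tau}_\bb(h+g))$, which is exactly the conclusion of the lemma.

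The combination of the two citations is essentially the whole argument; no separate calculation is needed beyond dictionary matching. The main obstacle I would anticipate is verifying that the translation $\boldsymbol{\tau}_\bb$ does not disturb the quasi-homogeneous bookkeeping needed to identify $\iota$ as an element of $\CC$ rather than of $H_{\alpha,d}$. This is precisely the point handled inside Lemma 21 of \cite{K83}: it controls how the exponents appearing in $\mathcal{T}(\boldsymbol{\tau}_\bb(h+g))$ relate to those in $\mathcal{T}(h+g)$, using the stability of $\mathcal{I}_{\nabla h}\cap\CC$ under small displacements of the base point, together with the assumption that $\partial_1 h,\ldots,\partial_d h$ are linearly independent so that the transverse class carried by $g$ cannot be killed by the change of origin.
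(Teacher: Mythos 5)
Your proposal is correct and coincides with the paper's own treatment: the paper proves this lemma by nothing more than the appeal to Proposition 4 (p.~1182) and Lemma 21 (p.~1184) of Karpushkin \cite{K83}, exactly the two results you invoke and combine. Your surrounding narrative (Euler identity, transversality of $\mathcal{B}$ to $\mathcal{I}_{\nabla h}\cap\CC$, behaviour under the translation $\boldsymbol{\tau}_\bb$) is a reasonable gloss on those citations and introduces no step beyond what the paper itself delegates to \cite{K83}.
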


Note that Definition \ref{def-ue} carries over to real analytic manifolds. In the sequel, we  write $\|\alpha\|_1 :=\|\alpha\|_{\ell^1(\N^d)} (= \sum_{j=1}^d \alpha_j)$.
\begin{theorem}[Theorem 1 in \cite{K83}]\label{thm-algo}
    Let $h\in \mathcal{E}_{\alpha,d}$ and the following two conditions hold.
    \begin{itemize}
        \item[(a)] There exists a lower $(h,\alpha)$-versal subspace $\mathcal{B}\ne 0$, such that for any $g\in \mathcal{B}\backslash\{0\}$ and any critical point $\mathfrak{b}$ of $h+g$, it holds that
        \[
        M(h+g,\mathfrak{b}) \curlyeqprec (\beta_1,p_1).
        \]
        \item[(b)] $\mathcal{Z}_h\ne \emptyset$, and 
        \[
        M(\hh, \sss)\curlyeqprec (\beta_2,p_2),\quad \forall \, \sss\in\mathcal{Z}_{h}.
        \] 
    \end{itemize}
    Then  $M(h)\curlyeqprec (\beta,p)$, where
    \[
   (\beta,p)=\left\{
    \begin{aligned}
        &\mathrm{max}\{(\beta_1,p_1),(\beta_2,p_2),(-\|\alpha\|_1,0)\}, &\mbox{if}\ \ \|\alpha\|_1 + \beta_2 \ne 0.\\
        &\mathrm{max}\{(\beta_1,p_1),(\beta_2,p_2+1)\}, &\mbox{if}\ \ \|\alpha\|_1 + \beta_2 = 0.
    \end{aligned}\right.
    \]
    If $\mathrm{(a)}$ holds and $\mathcal{Z}_h= \emptyset$, then
    \[
    M(h) \curlyeqprec \mathrm{max}\{(\beta_1,p_1),(-\|\alpha\|_1,0)\}.
    \]
    Here the maximum is taken in the lexicographic order.
\end{theorem}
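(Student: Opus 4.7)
The plan is to prove Theorem \ref{thm-algo} by exploiting the $\alpha$-quasi-homogeneity of $h$ through a smooth dyadic decomposition in the weighted radial variable $|\xi|_{\alpha,l}$. This reduces the uniform estimate for $J(t,h+P,\psi)$ to two microlocal inputs: the critical-point bounds of hypothesis (a), applied to each rescaled dyadic piece, and the weighted-sphere bound of hypothesis (b), applied to the transverse direction.

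Concretely, for $P\in\mathcal{H}_r(\epsilon)$ and $\psi\in C_0^\infty(U)$, introduce $1=\sum_{k\ge 0}\chi_k(\xi)$ with $\chi_k$ supported where $|\xi|_{\alpha,l}\sim 2^{-k}$, write $J=\sum_k J_k$, and on the support of each $\chi_k$ perform the rescaling $\xi=r^\alpha y$ with $r=2^{-k}$. Quasi-homogeneity yields $h(\xi)=r\,h(y)$, the Jacobian contributes $r^{\|\alpha\|_1}$, and the perturbation becomes $\tilde{P}_k(y):=r^{-1}P(r^\alpha y)$. Cauchy estimates on $\overline{B}_{\C^d}(0,r)$ together with $h(0)=\nabla h(0)=0$ and $|P|<\epsilon$ show that the family $\{\tilde{P}_k\}$ lies in a single space $\mathcal{H}_{r'}(\epsilon')$ with $r',\epsilon'$ independent of $k$. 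Letting $s:=t\,r$,
\[J_k = r^{\|\alpha\|_1}\int e^{is(h(y)+\tilde{P}_k(y))}\,\tilde\psi_k(y)\,dy,\]
where $\tilde\psi_k$ is supported on a fixed annulus about the weighted sphere.

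When $s\gtrsim 1$, split this integral into neighborhoods of the critical points $\mathfrak{b}$ of $h+\tilde{P}_k$ in the annulus and their complement: nonvanishing gradient and repeated integration by parts give arbitrarily fast decay on the complement, while on each critical neighborhood hypothesis (a) applied to the germ $\boldsymbol{\tau}_\mathfrak{b}(h+\tilde{P}_k)$, with Lemma \ref{lem-qua} ensuring the perturbation stays in a lower versal family, produces $|J_k|\lesssim r^{\|\alpha\|_1}s^{\beta_1}\log^{p_1}(s+2)$, uniformly in $k$. When $s\lesssim 1$, foliate the annulus by level sets of $|\cdot|_{\alpha,l}$ and apply hypothesis (b) on each leaf; integrating the radial direction gives $|J_k|\lesssim r^{\|\alpha\|_1}(1+s)^{\beta_2}\log^{p_2}(s+2)$. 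Summing over $k$ produces series of the form $\sum_k t^{\beta_i}2^{-k(\|\alpha\|_1+\beta_i)}\log^{p_i}(\cdot)$ for $i\in\{1,2\}$: if $\|\alpha\|_1+\beta_i>0$ the series converges to $t^{\beta_i}$; if $\|\alpha\|_1+\beta_i<0$ it saturates at the low-frequency boundary, producing the geometric boundary term $t^{-\|\alpha\|_1}$; and the resonance $\|\alpha\|_1+\beta_2=0$ yields the extra $\log t$ factor, precisely matching the case distinction in the conclusion. When $\mathcal{Z}_h=\emptyset$ the (b)-contribution vanishes and only the (a)-part and the $-\|\alpha\|_1$ boundary survive.

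The main obstacle is the uniformity of the estimates in $k$: the rescaled perturbations $\tilde{P}_k$ must all belong to a single space $\mathcal{H}_{r'}(\epsilon')$ and, more crucially, the constants implicit in Definition \ref{def-ue} coming from hypothesis (a) must not grow with $k$. Extracting this uniformity from the abstract $\curlyeqprec$-hypothesis, together with tight analytic control of $\tilde{P}_k$ via Cauchy estimates and a compactness argument on the admissible perturbation space, is the technically subtle core of Karpushkin's argument, and is where the dependence on the integer $N=N(h)$ and on the $C^N$-norm of $\tilde\psi_k$ must be carefully tracked.
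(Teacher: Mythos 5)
This statement is quoted by the paper as Theorem~1 of Karpushkin \cite{K83}; the paper itself offers no proof, so your proposal must stand or fall as a proof of Karpushkin's theorem, and it has a genuine gap at its central step. You claim that after the dyadic rescaling $\xi=r^\alpha y$, $r=2^{-k}$, the rescaled perturbations $\tilde{P}_k(y)=r^{-1}P(r^\alpha y)$ lie in a single space $\mathcal{H}_{r'}(\epsilon')$ uniformly in $k$, invoking Cauchy estimates and $h(0)=\nabla h(0)=0$. The normalization concerns $h$, not $P$: a general $P\in\mathcal{H}_r(\epsilon)$ has nonzero constant, linear, and other low-order terms, and under the rescaling a monomial $\xi^\gamma$ with $\gamma\cdot\alpha<1$ is multiplied by $r^{\gamma\cdot\alpha-1}\to\infty$ as $k\to\infty$. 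So $\sup|\tilde{P}_k|$ blows up (already $|\tilde{P}_k|\approx r^{-1}|P(0)|$ at the origin, and the non-constant low-degree part cannot be discarded as a harmless phase factor). The unboundedly amplified terms are exactly the elements of $\mathcal{L}_{\alpha,d}$, which is precisely why the theorem is formulated with a lower $(h,\alpha)$-versal subspace $\mathcal{B}\subset\mathcal{L}_{\alpha,d}$: Karpushkin's argument absorbs the low-degree part of an arbitrary holomorphic perturbation into the finite-dimensional family $h+g$, $g\in\mathcal{B}$, by means of the versal deformation theorem (a holomorphic, parameter-dependent change of variables), and only then can hypothesis (a) be applied. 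Your proposal never performs this reduction; Lemma \ref{lem-qua} does not supply it, as it only records that a certain monomial appears in the Taylor support of $\boldsymbol{\tau}_{\mathfrak{b}}(h+g)$ for $g$ already in $\mathcal{B}$. Without the versality step a soft rescaling argument cannot close, and indeed cannot exist: uniform stability of oscillation indices under small (even linear) perturbations is false in general, as the Varchenko-type counterexamples cited in Section \ref{sec-intro} show, so the versal structure is where the actual content of the theorem lives.

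A second, related gap is uniformity in the parameters you quantify over. Hypothesis (a) gives, for each fixed $g\in\mathcal{B}\backslash\{0\}$ and each critical point $\mathfrak{b}$, constants, a radius, and a perturbation bound $\epsilon$ depending on $(g,\mathfrak{b})$; your scheme needs these uniformly over the rescaled family (equivalently over $g$ ranging in a neighborhood of $0$ in $\mathcal{B}$ and over the critical set in the annulus), and likewise hypothesis (b) must be applied uniformly along $\mathcal{Z}_h$ together with a statement that away from $\mathcal{Z}_h$ the restricted phase is nondegenerate enough to integrate by parts. You flag this as the ``technically subtle core'' but do not address it; in \cite{K83} it is handled by compactness and semicontinuity arguments intertwined with the versal reduction, not separable from it. The final bookkeeping (summation over $k$, the boundary term $(-\|\alpha\|_1,0)$, and the logarithm in the resonant case $\|\alpha\|_1+\beta_2=0$) is plausible, but as it stands the proposal does not constitute a proof of the quoted theorem.
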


By Lemma \ref{lem-qua}, we know condition (a) can be simplified if the first order partial derivatives of $h$ are linearly independent, see Section \ref{ssec-prop(i)}.

To simplify condition (b), we define the projection
\begin{align*}
    \pi_{\alpha,l}:\quad \R^d\backslash\{0\}\ &\longrightarrow \ \Sl \\
    \xi \ &\longmapsto \ \mathbf{r}^{\alpha}\xi, \ \quad \mbox{with}\quad \mathbf{r}= \frac{1}{|\xi|_{\alpha,l}},
\end{align*}
as well as the set 
\[
\mathcal{C}_h := \{\xi\in\R^d\backslash\{0\}: \nabla h(\xi)=0\}.
\]
Then we have the following relations.

\begin{lemma}\label{lem-S}
  If $h\in\mathcal{E}_{\alpha,d}$ and $\mathcal{C}_h\ne \emptyset$, we have
    \begin{equation}\label{equ-Proj}
         \pi_{\alpha,l}(\mathcal{C}_h)= \mathcal{Z}_h.
    \end{equation}
    Moreover, if all $\alpha_j$ are equal, then
\begin{equation}\label{equ-RANK}
          \mathrm{rank \, Hess \,} \hh (\pi_{\alpha,l}(\xi_0)) = \mathrm{rank\, Hess \,} h(\xi_0), \quad\forall\, \xi_0\in \mathcal{C}_h.
    \end{equation}
\end{lemma}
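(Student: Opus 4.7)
The plan is to exploit Euler's identity for $\alpha$-homogeneous functions together with the Lagrange-multiplier description of critical points on $\Sl$. For \eqref{equ-Proj}, I would first differentiate the homogeneity relation $h(r^\alpha \xi) = r\, h(\xi)$ in $\xi_k$ to obtain $\partial_k h(r^\alpha \xi) = r^{1-\alpha_k}\, \partial_k h(\xi)$, which shows that $\mathcal{C}_h$ is invariant under the orbit $\xi \mapsto r^\alpha \xi$ and that each orbit meets $\Sl$ exactly at $\pi_{\alpha,l}(\xi)$. Differentiating in $r$ at $r=1$ instead yields Euler's identity $\sum_j \alpha_j \xi_j \partial_j h(\xi) = h(\xi)$, which forces $h$ to vanish at any critical point; these two facts together give the inclusion $\pi_{\alpha,l}(\mathcal{C}_h) \subset \mathcal{Z}_h$. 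For the converse, given $\sss \in \mathcal{Z}_h$, I would use $\mathrm{d}\hh(\sss) = 0$ to write $\nabla h(\sss) = \lambda\, \vec n$ for the outward normal $\vec n = \bigl(\tfrac{l}{\alpha_j}\, s_j^{l/\alpha_j-1}\bigr)_{j=1}^d$ of $\Sl$ at $\sss$ (well-defined since each $l/\alpha_j \ge 2$). Pairing with $\sss$ and combining Euler's identity with $\hh(\sss) = 0$ gives $\lambda\, l\, |\sss|_{\alpha,l}^l = 0$, forcing $\lambda = 0$ and hence $\nabla h(\sss) = 0$; so $\sss \in \mathcal{C}_h$ with $\pi_{\alpha,l}(\sss) = \sss$.

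For \eqref{equ-RANK}, where all $\alpha_j$ equal a common value $\alpha_0$, a further chain-rule differentiation gives $\mathrm{Hess}\, h(r^\alpha \xi) = r^{1-2\alpha_0}\, \mathrm{Hess}\, h(\xi)$, so the rank of the Hessian is constant along orbits and equals $\mathrm{rank}\,\mathrm{Hess}\,h(\sss)$ for $\sss := \pi_{\alpha,l}(\xi_0)$. Since $\lambda = 0$, the Hessian $\mathrm{Hess}\,\hh(\sss)$ coincides with the restriction of the bilinear form $\mathrm{Hess}\,h(\sss)$ to $V := T_\sss \Sl$. Applying Euler's identity to the $\alpha$-homogeneous function $\partial_k h$ (of degree $1-\alpha_0$) yields $\sum_j \alpha_0\, s_j\, \partial_j \partial_k h(\sss) = 0$ for every $k$, so $\sss$ lies in $K := \ker\,\mathrm{Hess}\,h(\sss)$; meanwhile $\sss \cdot \vec n = l/\alpha_0 \ne 0$, so $\sss \notin V$ and $K = (K \cap V) \oplus \langle \sss \rangle$. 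Symmetry of $\mathrm{Hess}\,h(\sss)$ implies its image equals $K^\perp$, which does not contain $\vec n$ (since $\sss \in K$ and $\sss \cdot \vec n \ne 0$); therefore $\ker\bigl(\mathrm{Hess}\,\hh(\sss)\bigr) = \{v \in V : \mathrm{Hess}\,h(\sss)\,v \in \langle \vec n \rangle\} = K \cap V$. Taking codimensions in $V$ gives $\mathrm{rank}\,\mathrm{Hess}\,\hh(\sss) = (d-1) - (\dim K - 1) = \mathrm{rank}\,\mathrm{Hess}\,h(\sss)$, as required.

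The main obstacle is the delicate bookkeeping in \eqref{equ-RANK}: one must simultaneously track that (i) the Lagrange multiplier vanishes, so that $\mathrm{Hess}\,\hh$ carries no correction term from the defining function of $\Sl$; (ii) $\sss$ itself lies in the kernel of $\mathrm{Hess}\,h(\sss)$ -- this is precisely where the hypothesis $\alpha_1 = \cdots = \alpha_d$ is used, since in the general weighted setting only the Euler vector $(\alpha_j s_j)_j$, and not $\sss$ itself, is automatically in the kernel; and (iii) the normal $\vec n$ is transverse to the image of $\mathrm{Hess}\,h(\sss)$, so that the restriction to $V$ drops the kernel dimension by exactly one. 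Without any one of these three facts, one only obtains the inequality $\mathrm{rank}\,\mathrm{Hess}\,\hh(\sss) \le \mathrm{rank}\,\mathrm{Hess}\,h(\sss)$ rather than the desired equality.
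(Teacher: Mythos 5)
Your argument is correct, and it reaches both \eqref{equ-Proj} and \eqref{equ-RANK} by a somewhat different route than the paper, although the engine is the same (Euler's identity and the homogeneity of the first and second derivatives). For the reverse inclusion in \eqref{equ-Proj}, the paper works in an explicit chart $\xi'\mapsto(\xi',\Pi(\xi'))$ on $\Sl$ and derives the relation \eqref{equ-Pi} before feeding it into Euler's identity; you instead phrase the same computation as a Lagrange-multiplier statement $\nabla h(\sss)=\lambda\vec n$ and kill $\lambda$ by contracting with the Euler vector field -- coordinate-free and arguably cleaner. (One small imprecision: you should pair with the Euler vector $(\alpha_j\sss_j)_j$, not with $\sss$ itself; in the general weighted setting the unweighted pairing is not controlled by Euler's identity, but the identity $\lambda\, l\,|\sss|_{\alpha,l}^l=0$ you display is exactly what the weighted pairing gives, so the argument stands as intended.) For \eqref{equ-RANK}, the paper reduces to the round sphere by choosing $l=2\alpha_1$, computes $\mathrm{Hess}\,\hh(\sss)=\mathbf{H}\,\mathrm{Hess}\,h(\sss)\,\mathbf{H}^T$ in the chart, and factors it as $\mathbf{H}\mathbf{H}^T B\,\mathbf{H}\mathbf{H}^T$ with $B$ the principal $(d-1)\times(d-1)$ block; you instead argue intrinsically that, since the ambient gradient vanishes, $\mathrm{Hess}\,\hh(\sss)$ is the restriction of the ambient Hessian form to $T_\sss\Sl$, and you count kernel dimensions using $\sss\in\ker\mathrm{Hess}\,h(\sss)$, $\sss\cdot\vec n\ne 0$, and $\mathrm{Im}=\ker^{\perp}$. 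Your version has the minor advantages of working on $\Sl$ for any admissible $l$ without normalizing to the round sphere, and of isolating exactly where the hypothesis $\alpha_1=\cdots=\alpha_d$ enters (namely that $\sss$, and not merely the Euler vector, lies in the kernel); the paper's chart computation, by contrast, produces the explicit factorization of $\mathrm{Hess}\,\hh(\sss)$, which is slightly more concrete but less transparent about the role of the hypothesis.
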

\begin{proof}
We first prove \eqref{equ-Proj}. By the $\alpha$-homogeneity of $h$, we have
\begin{align}\label{equ-partial}
    \partial_j h(r^\alpha \xi) = r^{1-\alpha_j} \partial_j h(\xi)\ \ \mbox{for}\ \  j=1,\cdots,d, \quad\mbox{and} \quad  \mathfrak{E}_{\alpha}(h)(\xi) = h(\xi),
\end{align}
where $\mathfrak{E}_{\alpha}=\sum_{j=1}^{d} \alpha_j \xi_j \partial_j$ is the Euler vector field for $\alpha$. Thus, it follows easily that $\pi_{\alpha,l}(\mathcal{C}_h)\subset \mathcal{Z}_h$. 

To see the reverse, taking $\sss= (\sss_1,\cdots,\sss_d)=:(\sss',\sss_d)\in\mathcal{Z}_h$, we assume that $\sss_d> 0$, and use the chart $\xi' \longmapsto (\xi', \Pi(\xi'))$, where
\begin{gather*}
    \xi'=(\xi_1,\dots,\xi_{d-1})\in B_{\R^{d-1}}(0,1)\ \
    \mbox{and} \ \ \Pi(\xi') = \left(1-\left(\xi_1^{\frac{l}{\alpha_1}} + \cdots + \xi_{d-1}^{\frac{l}{\alpha_{d-1}}}\right)\right)^{\frac{\alpha_d}{l}}.
\end{gather*}
Then
$
\hh(\xi') = h(\xi',\Pi(\xi'))
$
and $\mathrm{d}\hh(\sss)=0$ gives
\begin{equation}\label{equ-Pi}
    \partial_j h(\sss) = \frac{\alpha_d}{\alpha_j}\, \sss_j^{\frac{l}{\alpha_j}-1}  \partial_d h(\sss) \,\Pi(\sss')^{1-\frac{l}{\alpha_d}}\  \mbox{for}\ j=1,\cdots,d-1.
\end{equation}

Combining \eqref{equ-Pi} and the second equality in \eqref{equ-partial} with $\xi=\sss$, we get (note that $\sss_d= \Pi(\sss')$) 
\[
\alpha_d\, \partial_d h(\sss)\,\Pi(\sss')^{1-\frac{l}{\alpha_d}} = h(\sss).
\]
Since $\sss\in\mathcal{Z}_h$, we have $h(\sss)=0$. Then $\partial_d h(\sss)=0$, which yields $\nabla h(\sss)=0$ by \eqref{equ-Pi} again. Therefore, $\sss\in \mathcal{C}_h$ and \eqref{equ-Proj} is proved.

Now we prove \eqref{equ-RANK}. By the proof of \eqref{equ-Proj}, we know $\mathcal{Z}_h \subset \mathcal{C}_h$. It suffices to consider $\xi_0\in \mathcal{Z}_h$ since $h\in \mathcal{E}_{\alpha,d}$. Taking $\sss\in\mathcal{Z}_h$, we can choose $l=2\alpha_1$ and then $\mathbb{S}^{d-1}_{\alpha,l}=\mathbb{S}^{d-1}$, the standard sphere in $\R^d$. Moreover, we  assume $\sss_d>0$, then the standard chart is
$$\Pi(\xi')=(\xi',\sqrt{1-|\xi'|^2}),\quad \xi'\in B_{\R^{d-1}}(0,1).$$

Since $\nabla h(\sss)=0$, a direct computation gives
\begin{gather*}
    \mbox{Hess}\, \hh(\sss) = \mathbf{H}\, \mbox{Hess}\, h(\sss)\,\mathbf{H}^T,\quad \mbox{where}\ \ \mathbf{H} = \left(
\begin{array}{cc}
    \mathbb{I}_{d-1} & \nabla_{\xi'} \Pi(\sss')
\end{array}
\right),\\
\mbox{and} \ \  \mathbb{I}_{d-1} \ \ \mbox{is the}\ \ (d-1)\times (d-1) \ \ \mbox{identity matrix.}
\end{gather*}
Notice that  $\partial_j h(r^\alpha \sss)= r\, \partial_j h(\sss)=0$, $\forall\,r>0$. Taking derivative in $r$ gives
\[
\frac{\partial^2 h(\sss)}{\partial \xi_{j} \partial \xi_{d}} = \partial_1 
 \Pi(\sss')\,\frac{\partial^2 h(\sss)}{\partial \xi_{j} \partial \xi_{1}}+\cdots+ \partial_{d-1}\Pi(\sss')\, \frac{\partial^2 h(\sss)}{ \partial \xi_{j} \partial\xi_{d-1} },\quad j=1,\cdots,d.
\]
Therefore, we simplify Hess$\, h(\sss)$ and get that
\[
\mbox{Hess}\, \hh(\sss) \, = \, \mathbf{H}\mathbf{H}^T 
\left(
\begin{array}{ccc}
    \frac{\partial^2 h(\sss)}{\partial \xi_{1}^2} & \cdots& \frac{\partial^2 h(\sss)}{\partial \xi_{1} \partial \xi_{d-1}} \\
    \vdots & \ddots & \vdots\\
    \frac{\partial^2 h(\sss)}{\partial \xi_{d-1} \partial \xi_{1}} & \cdots & \frac{\partial^2 h(\sss)}{\partial \xi_{d-1}^2 }
\end{array}
\right)
\mathbf{H}\mathbf{H}^T.
\]
Since $\mathbf{H}\mathbf{H}^T =  \mathbb{I}_{d-1}+ \nabla_{\xi'} \Pi(\sss')(\nabla_{\xi'} \Pi(\sss'))^T$ is non-singular, we finish the proof of \eqref{equ-RANK}.
\end{proof}

With all the tools in hands, we are in a position to prove Proposition \ref{prop-Q_11 3+4}.

\subsection{Proof of Proposition \ref{prop-Q_11 3+4} $(i)$}\label{ssec-prop(i)}
    Take $\gamma_1=(\frac{1}{3},\frac{1}{3},\frac{1}{3})$.
    By \eqref{equ-x1x2x31}, a change of coordinates and Lemma \ref{lem-nocritical+priciple+quad} (c), it suffices to prove
    \[
    M(h)\curlyeqprec (-1,1),\quad\mbox{with}\ \ h(z)=z_1z_2z_3.
    \]
    Since $ (z_2z_3,z_1z_3,z_1z_2)$ are linearly independent, Lemma \ref{lem-qua} gives that for any lower $(h,\gamma_1)$-versal subspace $\mathcal{B}\ne 0$ and $g\in \mathcal{B}\backslash\{0\}$, the rank of $h+g$ at any critical point is nonzero. If the rank of $h+g$ at critical point $\bb$ is at least 2,  the splitting lemma and Lemma \ref{lem-nocritical+priciple+quad} (c) yield
    \[
    M(h+g,\bb)\curlyeqprec (-1,0).
    \]
    If the rank of $h+g$ at critical point $\bb$ is 1, it holds that (since $\mathcal{B}\subset \mathcal{L}_{\gamma_1,3}$)
    \begin{equation}\label{equ-unfold}
        \boldsymbol{\tau}_{\bb}(h+g)(z)=c_0+c (z_1+ a_2 z_2 + a_3 z_3)^2 + h,\quad\mbox{for some}\ c\ne 0, \ c_0,a_2,a_3\in\R.
    \end{equation}
    A change of variables $y_1=z_1+ a_2 z_2 + a_3 z_3 + \frac{z_2 z_3}{2 c}$, $y_2=z_2,y_3=z_3$ gives
    \[
      \boldsymbol{\tau}_{\bb}(h+g)(z)= c_0+c\, y_1^2 - y_2 y_3 (a_2 y_2 + a_3 y_3)-\frac{1}{4c}\, y_2^2 y_3^2.
    \]
    By Lemma \ref{lem-nocritical+priciple+quad} (c) again, it suffices to prove that 
       \[
       M\left( y_2 y_3 (a_2 y_2 + a_3 y_3)+\frac{1}{4c}\, y_2^2 y_3^2\right) \curlyeqprec \left(-\frac{1}{2},1\right).
       \]
       We consider two cases $a_2= a_3 =0$ and $a_2^2+ a_3^2>0$. It suffices to prove
       \[
       M(y_1^2 y_2)\curlyeqprec \left(-\frac{1}{2},0\right) \ \  \mbox{and}\   \ M(y_1^2 y_2 ^2) \curlyeqprec \left(-\frac{1}{2},1\right),\  \ \mbox{respectively}.
       \]
       These two estimates can be derived by, for instance, a combination of Theorem \ref{thm-twodim} and Proposition \ref{prop-adapt} (alternatively, one can use Theorem \ref{thm-algo} again).
       In conclusion, we verify condition (a) in Theorem \ref{thm-algo} with $(\beta_1,p_1) = (-1,1)$.

       Next, by Lemma \ref{lem-S} and the formula of Hess\,$h(z)$, we know
       \[
       \mathcal{Z}_h = \{\pm \boldsymbol{e_j}: \, j=1,2,3\}\subset \mathbb{S}^2_{\gamma_1,\frac{2}{3}}\, (= \mathbb{S}^2).
       \]
      By Lemma \ref{lem-S} again, we know each critical point of $\hh$ is non-degenerate. Therefore, $(\beta_2,p_2)=(-1,0)$.

       As a result, Theorem \ref{thm-algo} gives the desired estimate $M(h)\curlyeqprec (-1,1)$.

\subsection{Proof of Proposition \ref{prop-Q_11 3+4} $(ii)$}
Now we deal with $\mathbf{P}_4$, which has $O$ type singularity, see e.g. \cite[p. 252]{AGV12}.
To begin with, we give two elementary lemmas.
\begin{lemma}\label{lem-binary4}
    Suppose that
    $$
    f(x_1,x_2) = \sum_{k=0}^4  a_k \, x_1^{4-k} x_2^{k},\quad \mbox{with}\ \ (a_0,\cdots,a_4)\in\R^5\backslash\{0\}.
    $$
   Then $M(f) \curlyeqprec (-\frac{1}{4},0)$ if and only if $f(x_1,1)$ or $f(1,x_2)$ has a real root with multiplicity four. Otherwise, we have $M(f) \curlyeqprec (-\frac{1}{3},0)$.
\end{lemma}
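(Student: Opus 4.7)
The plan is to reduce the problem to the two-dimensional machinery of Theorem \ref{thm-twodim} by exploiting the factorization of $f$ as a binary quartic form over $\R$. Write $f$ as a product of real linear and real irreducible quadratic factors, and let $m$ denote the largest multiplicity of any real linear factor, with the convention that $x_2$ counts as a ``factor at infinity'' when $a_0=0$. With this convention, the assumption that $f(x_1,1)$ or $f(1,x_2)$ has a real root of multiplicity $4$ translates exactly to $m=4$, i.e.\ $f$ equals a nonzero constant times the fourth power of a real linear form. The proof then splits along the dichotomy $m=4$ versus $m\le 3$ after an orthogonal change of coordinates that places a linear factor of maximal multiplicity along the $y_1$-axis.

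When $m=4$, the new form is $f(y)=c\,y_1^4$ with $c\ne 0$. For any $P\in\mathcal{H}_r(\epsilon)$ with $\epsilon$ small, Cauchy's estimates bound $|\partial_{y_1}^4 P|$ by a small constant, so $|\partial_{y_1}^4(f+P)|\ge 12|c|$ on the support of $\psi$; van der Corput's lemma of order four in $y_1$, uniform in $y_2$, combined with integration over the compact support of $\psi$ in $y_2$, yields $M(f)\curlyeqprec(-\tfrac{1}{4},0)$. Taking $P=0$ and $\psi(0)\ne 0$, the standard asymptotics of $\int e^{itc y_1^4}\psi\,dy$ show that this exponent is sharp.

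When $m\le 3$, the coordinate change produces $f(y)=y_1^m g(y_1,y_2)$ with $y_1\nmid g$, and I would read off the Newton polyhedron $\mathcal{N}(f)$ directly. If $m=3$, only $(3,1)$ is relevant, $d_S=3$, and $\mathcal{P}_S$ is the unbounded vertical ray through $(3,1)$, so Proposition \ref{prop-adapt}(b) and Theorem \ref{thm-twodim} give $M(f)\curlyeqprec(-\tfrac{1}{3},0)$. If $m=2$, the exact multiplicity two forces the $y_2^2$-coefficient of $g$ to be nonzero, whence $(2,2)$ is a vertex of $\mathcal{N}(f)$ and $\mathcal{P}_S=\{(2,2)\}$; Proposition \ref{prop-adapt}(a) applies and delivers $M(f)\curlyeqprec(-\tfrac{1}{2},0)$. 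If $m\in\{0,1\}$, all monomials of $f$ lie on the line $\xi_1+\xi_2=4$, so $\mathcal{P}_S$ is either the single point $\{(2,2)\}$ or a compact edge on that line; in the latter case the admissible multiplicity in Proposition \ref{prop-adapt}(c) equals $4/2=2$ and is not attained, since every real root of $f_{\mathcal{P}_S}(\xi,1)$ has multiplicity at most $m\le 1$. In all these situations the coordinates are adapted and $M(f)\curlyeqprec(-\tfrac{1}{2},0)$, which a fortiori yields the claimed $(-\tfrac{1}{3},0)$.

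The main obstacle is the verification of Proposition \ref{prop-adapt}(c) in the $m\in\{0,1\}$ subcases: this hinges on the hypothesis that no real linear factor of $f$ has multiplicity exceeding $m$, together with the fact that real irreducible quadratic factors of $f$ contribute no real roots to $f_{\mathcal{P}_S}(\xi,1)$. Once adaptedness is in hand, Theorem \ref{thm-twodim} delivers the estimates uniformly under analytic perturbations, so the claimed $M$-bounds follow without further work.
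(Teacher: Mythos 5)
Your argument is correct and relies on the same two-dimensional machinery as the paper's proof (Proposition \ref{prop-adapt} for adaptedness, Theorem \ref{thm-twodim} for the resulting uniform bound, and van der Corput directly for the fourth-power case); organizing the case analysis by the maximal multiplicity $m$ of a real linear factor is an equivalent bookkeeping to the paper's enumeration of normal forms in \eqref{equ-forms}. One minor imprecision: in the $m=2$ subcase the claimed $M(f)\curlyeqprec(-\tfrac12,0)$ should really be $(-\tfrac12,\rho_S)$ where Theorem \ref{thm-twodim} can give $\rho_S=1$ (e.g.\ for $y_1^2 y_2^2$ or $y_1^2(y_1^2+y_2^2)$), but this still implies the stated $(-\tfrac13,0)$ bound under the lexicographic order, so the conclusion is unaffected.
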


\begin{proof}
    Assume that $a_0\ne 0$. We discuss the multiplicity for the roots of $f(x_1,1)$ in $\C$ by a similar argument in \cite[p. 85]{M21}. Under proper linear transforms $f$ can be reduced to one of the following forms:
    \begin{align}\label{equ-forms}
        \begin{split}
             x_1^4,&\ x_1^3 x_2, \ x_1^2 x_2^2,\ x_1^2(x_1^2 + x_2^2),\ x_1^2 x_2 (x_1+x_2),\\
              (x_1^2 + x_2^2)(x_1^2 + a_1  x_1&  x_2 + a_2 x_2^2), \ a_1^2 + a_2^2 \ne 0\,;\  x_1x_2(x_1^2+ a_1 x_1x_2+ a_2 x_2^2), \ a_2\ne 0.
        \end{split}
    \end{align}
    
By Proposition \ref{prop-adapt}, we check that in each case (except $x_1^4$), the coordinate system $\{x_1,x_2\}$ is adapted. Consequently, Van der Corput lemma and Theorem \ref{thm-twodim} yield
    \begin{align*}
        & M(x_1^4)\curlyeqprec\left(-\frac{1}{4},0\right),\ \ \, \qquad\ \; \quad M(x_1^3x_2) \curlyeqprec \left(-\frac{1}{3},0\right), \\
        & M(x_1^2(x_1^2+ x_2^2)) \curlyeqprec \left(-\frac{1}{2},1\right),\,\ M(x_1^2x_2(x_1+x_2))\curlyeqprec \left(-\frac{1}{2},1\right),     
    \end{align*}
    while for last two cases in \eqref{equ-forms}, the index is $(-\frac{1}{2},0)$.
We complete the proof.
\end{proof}

\begin{lemma}\label{lem-coincide}
    Let $(m_1,m_2,m_3)\in\R^3\backslash\{0\}$ and $(m_4,m_5,m_6)\in \R^3\backslash\{0\}$,  
    \begin{equation*}
        f(r)=m_1 r^2+m_2 r+m_3,\ \  g(r)=m_4 r^2+m_5 r +m_6.
    \end{equation*}
    If there exists a constant $c\in\R\backslash\{0\}$ such that $f^2 + c\, g^2 \not\equiv 0$ and has a real root with multiplicity four, then there exists $c_0\in\R\backslash\{0\}$ such that $f=c_0\, g$.
\end{lemma}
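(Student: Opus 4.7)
The plan is to use the fact that $f^2+cg^2$ is a polynomial of degree at most $4$ admitting a real root of multiplicity $4$. Since the multiplicity of any root of a nonzero polynomial cannot exceed its degree, the hypothesis $f^2+cg^2\not\equiv 0$ will force $f^2+cg^2=A(r-r_0)^4$ for some $A\in\R\setminus\{0\}$ and $r_0\in\R$. The remainder of the argument will extract from this identity that both $f$ and $g$ are real nonzero scalar multiples of $(r-r_0)^2$, at which point the conclusion $f=c_0 g$ is immediate.

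I would split according to the sign of $c$. If $c>0$, then $f^2+cg^2$ is pointwise nonnegative and vanishes to fourth order at $r_0$. Vanishing of the value at $r_0$ gives $f(r_0)^2+cg(r_0)^2=0$, whence $f(r_0)=g(r_0)=0$. Differentiating twice and using $f(r_0)=g(r_0)=0$ yields $f'(r_0)^2+cg'(r_0)^2=0$, so $f'(r_0)=g'(r_0)=0$ as well. Since $f$ has degree at most $2$, Taylor's formula then gives $f(r)=\tfrac{1}{2}f''(r_0)(r-r_0)^2$, and analogously for $g$. The hypothesis $(m_1,m_2,m_3)\neq 0$ forces $f\not\equiv 0$, so $f''(r_0)\neq 0$, and similarly $g''(r_0)\neq 0$; hence both $f$ and $g$ are nonzero real scalar multiples of $(r-r_0)^2$, and the conclusion follows with $c_0=f''(r_0)/g''(r_0)$.

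For $c<0$ I would write $c=-c'$ with $c'>0$ and factor in $\R[r]$ as
\[
(f-\sqrt{c'}\,g)(f+\sqrt{c'}\,g)=A(r-r_0)^4.
\]
Each factor is a real polynomial of degree at most $2$, and their product has $(r-r_0)$ as its only irreducible real factor with total multiplicity $4$. Unique factorization in $\R[r]$ together with the degree bound then forces both factors to be scalar multiples of $(r-r_0)^2$; solving the resulting $2\times 2$ linear system recovers $f$ and $g$ as explicit real multiples of $(r-r_0)^2$, and the nonvanishing hypotheses on $(m_1,m_2,m_3)$ and $(m_4,m_5,m_6)$ guarantee that these multiples are nonzero. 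The only point requiring care is this degree bookkeeping, since a priori one must exclude degenerate factorizations such as a constant paired with a quartic; ruling this out uses the degree constraint $\deg f,\deg g\leq 2$ decisively. No further substantial obstacle is expected.
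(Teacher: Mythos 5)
Your proposal is correct and follows essentially the same route as the paper's proof: split on the sign of $c$, use the sum-of-squares structure to force $f$ and $g$ to vanish (to second order) at $r_0$ when $c>0$, and factor $f^2+cg^2=(f+\sqrt{-c}\,g)(f-\sqrt{-c}\,g)$ with degree bookkeeping in $\R[r]$ when $c<0$. The extra detail you supply (the Taylor/derivative argument and the explicit $2\times 2$ resolution) merely fills in steps the paper leaves as ``obvious.''
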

\begin{proof}
Without loss of generality, we assume
    \[
    f(r)^2 + c g(r)^2 \equiv s( r - r_0)^4\quad \mbox{for some}\ s\in \R\backslash\{0\}.
    \]
    
    If $c>0$, then $s>0$ and $r_0$ is the root of $f$ and $g$, the conclusion is obvious.

     If $c<0$, since
     \[
     f(r)^2 + c g(r)^2  = \left(f(r)+ \sqrt{-c}\, g(r)\right)\left(f(r)-\sqrt{-c}\, g(r)\right),
     \]
     we know both $f + \sqrt{-c}g$ and $f - \sqrt{-c}g$ have root $r_0$ with multiplicity 2. Then the proof is finished.
\end{proof}

Let 
$
    \PP(z):=(\sum_{j=1}^4 z_j)^3-\sum_{j=1}^4 z_j^3
$ for $z\in\R^4$.
  It suffices to show $M(\PP)\curlyeqprec (-\frac{4}{3},0)$ by Lemma \ref{lem-nocritical+priciple+quad} (c). We apply Theorem \ref{thm-algo}.
Since
\begin{equation*}
    \sum_{j=1}^4 a_j \partial_j \PP(z) \equiv 0 \quad\Longrightarrow\quad \left(\sum_{j=1}^4 a_j\right)\left(\sum_{j=1}^4 z_j\right)^2 \equiv \sum_{j=1}^4 a_j z_j^2,
\end{equation*}
the coefficient of $z_1 z_2$ must be zero. So $\sum_{j=1}^4 a_j =0$, which yields $a_j=0$ for all $j$ as well. Therefore, the partial derivatives of first order of $\PP$ are linearly independent.

Moreover, $\mathcal{Z}_{\PP}=\emptyset$. Indeed, the stationary equation $\nabla \PP (z)=0$ gives 
\begin{equation*}
\left(\sum_{j=1}^4 z_j\right)^2=z_1^2=z_2^2=z_3^2=z_4^2,
\end{equation*}
which has only zero solution. Then we use Lemma \ref{lem-S} to get the conclusion.

We are left with {\textbf{condition $\mathrm{(a)}$ of Theorem \ref{thm-algo}}.

Let weight $\gamma_2=(\frac{1}{3},\frac{1}{3},\frac{1}{3},\frac{1}{3})$, $\|\gamma_2\|_1=\frac{4}{3}$.  By Lemma \ref{lem-qua}, for any lower $(\PP,\gamma_2)$-versal subspace $\mathcal{B}$ and $g\in  \mathcal{B}\backslash\{0\}$, the rank of $\PP+g$ at its critical point is at least 1. Since $\frac{3}{2}>\frac{4}{3}$, it suffices to consider the cases that the rank equals to 1 or 2, by Lemma \ref{lem-nocritical+priciple+quad} (c) and the splitting lemma.

\textbf{(1) The rank 1 case.}
Note that $g\in \mathcal{L}_{\gamma_2,4}$, by symmetry we can assume that at the critical point $\PP+g$ is the 5-parameter deformation (which is the counterpart of \eqref{equ-unfold}):
\begin{equation*}
   \mathcal{F}_1 = \mathcal{F}_1 (z,\boldsymbol{a},c,c_0) :=  c_0+c(z_1+a_2z_2+a_3z_3+a_4z_4)^2+\PP,
\end{equation*}
 where $\boldsymbol{a}=(a_2,a_3,a_4)\in\R^3$, $c_0\in\R$ and $c \neq 0$. 
 
It suffices to show $M(\mathcal{F}_1)\curlyeqprec (-\frac{4}{3},0)$. Let $\widetilde{\gamma_2}=(\frac{1}{2},\frac{1}{3},\frac{1}{3},\frac{1}{3})$. We use a change of variables $y_1=z_1+a_2z_2+a_3z_3+a_4z_4$ and $y_j=z_j$ for $j=2,3,4$. A direct calculation shows  
 \[
 \mathcal{F}_1 = c_0+ c\left(y_1+\frac{3}{2c}(y_2+y_3+y_4)(y_2+ y_3 + y_4 + 2T)\right)^2+ \mathcal{F}_3 + \mathcal{F}_4+ \mathcal{F}_5,
 \]
 where $T=-a_2y_2-a_3y_3-a_4y_4$, $\mathcal{F}_5\in H_{\widetilde{\gamma_2},4}$ and
 \begin{align}\label{equ-F3F4}
 \begin{split}
      \mathcal{F}_3 &= \mathcal{F}_3(y_2,y_3,y_4)= (y_2+y_3+y_4+T)^3-(T^3+y_2^3+y_3^3+y_4^3), \\
       \mathcal{F}_4 &=\mathcal{F}_4(y_2,y_3,y_4)= -\frac{9}{4c}(y_2+y_3+y_4)^2(y_2+ y_3 + y_4 + 2T)^2.
 \end{split}
 \end{align}

 Using a change of coordinates and  Lemma \ref{lem-nocritical+priciple+quad} (b)-(c), it suffices to prove
 \begin{equation}\label{equ-F34}
     M(\mathcal{F}_3+\mathcal{F}_4)\curlyeqprec \left(-\frac{5}{6},0\right).
 \end{equation}
 
We first focus on the case w.r.t. $\boldsymbol{a}$, in which 
 $
 \frac{\partial \mathcal{F}_3}{\partial y_j} \equiv 0
 $ 
 for some $j$. By symmetry we may assume that $j=2$. Then a direct computation yields that this happens when
 \begin{equation}\label{equ-atype}
      \boldsymbol{a} = (0,1,1) \ \ \mbox{or} \ \ (1,0,0).
 \end{equation}
 In these cases $\mathcal{F}_3=3y_3y_4 (y_3 + y_4)$, which has $D_4^-$ type singularity, and 
 \[
 \mathcal{F}_3+ \mathcal{F}_4  =-\frac{9}{4c}(y_2+y_3+y_4)^2(y_2-y_3-y_4)^2+3y_3y_4 (y_3 + y_4).
 \]
 Consider the weight $ (\frac{1}{4},\frac{1}{3},\frac{1}{3})$, by Lemma \ref{lem-nocritical+priciple+quad} (b), in this case to prove \eqref{equ-F34} it suffices to establish the following estimate.

 \begin{lemma}\label{lem-MM}
     $M\big(z_1^4 + z_2 z_3 (z_2+ z_3)\big)\curlyeqprec (-\frac{11}{12},0).$
 \end{lemma}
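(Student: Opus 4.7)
I would apply Karpushkin's algorithm (Theorem~\ref{thm-algo}) with the weight $\alpha=(\tfrac14,\tfrac13,\tfrac13)$. Under this weight, $h(z):=z_1^4+z_2z_3(z_2+z_3)\in\mathcal{E}_{\alpha,3}$ is homogeneous of weighted degree one and $\|\alpha\|_1=\tfrac{11}{12}$, so the target index $(-\tfrac{11}{12},0)$ matches exactly the automatic term $(-\|\alpha\|_1,0)$ appearing in the conclusion of the theorem.

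The first step is to verify that condition (b) of Theorem~\ref{thm-algo} is vacuous. The equation $\nabla h=0$ reads $4z_1^3=0$ together with $z_3(2z_2+z_3)=0$ and $z_2(z_2+2z_3)=0$; the first forces $z_1=0$, while a short case split on the other two yields $z_2=z_3=0$. Hence $\mathcal{C}_h=\emptyset$ and Lemma~\ref{lem-S} gives $\mathcal{Z}_h=\emptyset$.

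The main work is condition (a). The derivatives $\partial_1 h=4z_1^3$, $\partial_2 h=z_3(2z_2+z_3)$, $\partial_3 h=z_2(z_2+2z_3)$ are $\R$-linearly independent (the first involves only $z_1$; the other two are two independent quadratic forms in $(z_2,z_3)$), so Lemma~\ref{lem-qua} applies. For any lower $(h,\alpha)$-versal complement $\mathcal{B}\subset\mathcal{L}_{\alpha,3}$, any $g\in\mathcal{B}\setminus\{0\}$, and any critical point $\mathfrak{b}$ of $h+g$, I would bound $M(h+g,\mathfrak{b})$ by case analysis on the corank $k$ of $\mathrm{Hess}(h+g)(\mathfrak{b})$. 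When $k\le 1$, the splitting lemma together with Van der Corput on the reduced one-variable germ (whose order of vanishing is at most $\deg h=4$) and Lemma~\ref{lem-nocritical+priciple+quad}(c) produce an index at most $(-\tfrac54,0)$, which is already strictly below $(-\tfrac{11}{12},0)$. When $k=2$, the splitting lemma leaves a $2$-dimensional germ $\widetilde{f}$ whose cubic part necessarily retains the $D_4^-$-form $K(z_2,z_3):=z_2^2z_3+z_2z_3^2$ inherited from $h$, since the weight condition $\mathcal{L}_{\alpha,3}$ forbids pure $(z_2,z_3)$-cubic monomials in $g$. I would then use Proposition~\ref{prop-adapt} and Theorem~\ref{thm-twodim} to show the Newton distance of $\widetilde{f}$ in adapted coordinates is at most $\tfrac{12}{5}$, so that $M(\widetilde{f})\curlyeqprec(-\tfrac{5}{12},0)$; combined with the Morse contribution $(-\tfrac12,0)$ from Lemma~\ref{lem-nocritical+priciple+quad}(c), this yields the desired $(-\tfrac{11}{12},0)$. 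The case $k=3$ (vanishing Hessian) is analogous: Lemma~\ref{lem-qua} forces a non-trivial low-weight monomial into the Taylor expansion of $\boldsymbol{\tau}_{\mathfrak{b}}(h+g)$, after which a Newton-polyhedron analysis in three variables recovers the bound.

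The main obstacle will be the $k=2$ subcase in which the Morse eigenvector is not aligned with the $z_1$-axis: the cubic form $K$ can then partially or even fully vanish after restriction to the non-Morse plane, and one must invoke the surviving quartic $z_1^4$ from $h$ via an adapted coordinate change (Proposition~\ref{prop-adapt}) to ensure $\widetilde{f}$ still has Newton distance at most $\tfrac{12}{5}$.
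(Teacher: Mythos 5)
Your route is genuinely different from the paper's. The paper disposes of this lemma in two sentences: under the linear change of variables $z_1=w_1$, $z_2+z_3=w_2$, $z_2-z_3=w_3$, the phase becomes $w_1^4+\tfrac14 w_2^3-\tfrac14 w_2 w_3^2$, which is a $U_{12}$ singularity in Arnold's classification, and the uniform estimate for $U_{12}$ with index $(-\tfrac{11}{12},0)$ is already available in Karpushkin's earlier work (cited as \cite{K89}). You instead try to rederive that bound from scratch by running Theorem~\ref{thm-algo} with the weight $\alpha=(\tfrac14,\tfrac13,\tfrac13)$. The set-up of your argument is sound: $h\in\mathcal{E}_{\alpha,3}$, $\|\alpha\|_1=\tfrac{11}{12}$, $\mathcal{Z}_h=\emptyset$, and the three first-order partials are linearly independent so Lemma~\ref{lem-qua} applies. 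The corank-$\le1$ subcase is also fine, since the Morse contribution alone already gives $(-1,0)\prec(-\tfrac{11}{12},0)$.

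The gap is in the corank-$2$ subcase, and you have essentially flagged it yourself without resolving it. You assert that the $2$-dimensional residual germ $\widetilde f$ from the splitting lemma ``necessarily retains the $D_4^-$-form $K=z_2^2z_3+z_2z_3^2$,'' but this is not justified: the splitting lemma's nonlinear coordinate change does not preserve the $(z_2,z_3)$-plane, and the Morse eigenvector need not be aligned with $z_1$. If the rank-$1$ quadratic form is supported in a mixed direction, the plane carrying the residual germ can be such that the cubic $2$-jet degenerates (acquires a triple root or vanishes), in which case the Newton distance of $\widetilde f$ can be as large as $4$ and the resulting index $(-\tfrac14,0)$ plus the Morse term $(-\tfrac12,0)$ gives only $(-\tfrac34,0)$, strictly worse than the target $(-\tfrac{11}{12},0)$. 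Your closing remark that ``one must invoke the surviving quartic $z_1^4$'' is precisely the point that needs a proof but is left as a hope --- and it is in tension with the setup, since if $z_1$ is \emph{not} the Morse direction the splitting rearranges the $z_1^4$ term nontrivially, while if it \emph{is} the Morse direction then $z_1^4$ is absorbed into the Morse part and cannot help the residual. To close this case one would have to mimic the explicit reduction carried out in Lemma~\ref{lem-F3} (solving for the residual germ after the completing-the-square substitution, then analyzing the binary forms $Q_2$, $Q_3$ via Lemma~\ref{lem-binary4}); as written, the argument is incomplete.
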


\begin{proof}
Using a change of variables $z_1=w_1, z_2+ z_3 =w_2, z_2-z_3=w_3$, the polynomial can be written as a unimodal germ which has $U_{12}$ type singularity  in the terminology of Arnold, see \cite[p. 273]{AGV12}. The uniform estimate associated to this phase has been studied by Karpushkin with index $(-\frac{11}{12},0)$, which is matched with the oscillation index, see \cite{K89} and the reference therein.
\end{proof}

Thus, we can assume that $\frac{\partial \mathcal{F}_3}{\partial y_j} \not\equiv 0$ for all $j$. 
\begin{lemma}\label{lem-F3}
    Let $\mathcal{F}_3$ be as in \eqref{equ-F3F4}. If $\frac{\partial \mathcal{F}_3}{\partial y_j} \not\equiv 0$ for $j=2,3,4$, then $M(\mathcal{F}_3) \curlyeqprec (-\frac{5}{6},0)$.
\end{lemma}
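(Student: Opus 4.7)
My plan is to apply Theorem \ref{thm-algo} to $\mathcal{F}_3\in\mathcal{E}_{\gamma_1,3}$ with the weight $\gamma_1=(\tfrac{1}{3},\tfrac{1}{3},\tfrac{1}{3})$ (so $\|\gamma_1\|_1=1$), in parallel with the treatment of $z_1z_2z_3$ in Section \ref{ssec-prop(i)}. The complication is that $\mathcal{F}_3$ is a general ternary cubic depending on the parameters $\boldsymbol{a}$, not a product of three linear forms, so several singularity types of the projective cubic curve $\{\mathcal{F}_3=0\}$ must be handled in parallel.

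For condition (b), I translate $\mathcal{Z}_{\mathcal{F}_3}$ into the nonzero critical set of $\mathcal{F}_3$ via Lemma \ref{lem-S}. Under the hypothesis $\partial_j\mathcal{F}_3\not\equiv 0$ for $j=2,3,4$, the three quadratic equations $\partial_j\mathcal{F}_3=0$ cut out either $\{0\}$ or a finite union of lines through the origin, and at any such nonzero critical point a direct computation shows that the Hessian of $\mathcal{F}_3$ has rank $\ge 2$. By Lemma \ref{lem-S} the Hessian of $\hh$ on the sphere inherits this rank, so the splitting lemma together with Theorem \ref{thm-twodim} gives $M(\hh,\mathfrak{s})\curlyeqprec(-1,0)$, i.e.\ I take $(\beta_2,p_2)=(-1,0)$.

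The bulk of the work is condition (a). Fix a lower $(\mathcal{F}_3,\gamma_1)$-versal subspace $\mathcal{B}$; for $g\in\mathcal{B}\setminus\{0\}$ and any critical point $\mathfrak{b}$ of $\mathcal{F}_3+g$, Lemma \ref{lem-qua} guarantees that the Hessian of $\mathcal{F}_3+g$ at $\mathfrak{b}$ has rank at least $1$. I split by rank. A rank-$3$ (Morse) critical point yields $(-3/2,0)$. A rank-$2$ critical point, after the splitting lemma, reduces to a one-variable analytic germ vanishing to order at most $3$ (because $\deg \mathcal{F}_3=3$ and $\deg g\le 2$); Van der Corput together with Lemma \ref{lem-nocritical+priciple+quad}(c) then produces at worst $(-4/3,0)$. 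In the rank-$1$ case, the splitting lemma brings $\boldsymbol{\tau}_\mathfrak{b}(\mathcal{F}_3+g)$ to the form $c+\epsilon z_1^2+f(z_2,z_3)$, where the Taylor series of $f$ begins with a nonzero cubic binary form $f_3$. After a linear change of coordinates, $f_3$ falls into one of the classical normal forms for binary cubics (three distinct linear factors, a double plus a simple factor, a complex-conjugate pair times a linear factor, or a triple factor); in each case, Proposition \ref{prop-adapt} and Theorem \ref{thm-twodim} deliver the oscillation index of $f$ in adapted coordinates. The sharpest is the triple-factor subcase $f_3\sim z_2^3$, which gives $M(f)\curlyeqprec(-1/3,0)$. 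Combining with $M(\epsilon z_1^2)\curlyeqprec(-1/2,0)$ via Lemma \ref{lem-nocritical+priciple+quad}(c) yields $M(\mathcal{F}_3+g,\mathfrak{b})\curlyeqprec(-5/6,0)$, so $(\beta_1,p_1)=(-5/6,0)$.

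The step I expect to be the main obstacle is the uniform control in the triple-factor rank-$1$ subcase: I must show that the higher-order Taylor terms of $f$, which arise from the analytic coordinate change in the splitting lemma and depend on $g$, cannot inflate the Newton distance of $f$ beyond $3$ in adapted coordinates. This is where the hypothesis $\partial_j\mathcal{F}_3\not\equiv 0$ and the specific structural form of $\mathcal{F}_3$ from \eqref{equ-F3F4} must be invoked with care. Once this uniform bound is established, Theorem \ref{thm-algo} gives
\[
M(\mathcal{F}_3)\curlyeqprec \max\bigl\{(-5/6,0),\,(-1,1),\,(-1,0)\bigr\}=\left(-\tfrac{5}{6},0\right).
\]
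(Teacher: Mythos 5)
Your overall strategy coincides with the paper's (Karpushkin's algorithm with weight $\gamma_1=(\tfrac13,\tfrac13,\tfrac13)$, versal deformations, splitting by Hessian rank, reduction to two-variable germs), but your rank-$1$ analysis in condition (a) has a genuine gap: you assert that after the splitting lemma the two-variable germ $f$ ``begins with a nonzero cubic binary form''. This is false for certain parameter values. In the paper's notation, completing the square in $\sigma_2$ leaves $Q_3-\tfrac{9}{4c_*}Q_2^2$ plus higher weighted-order terms, and the binary cubic $Q_3$ does vanish identically for special choices of $(\boldsymbol{a},b_3,b_4)$ (two explicit families are listed, one with $a_2=a_4\neq 0$, $a_3=1$, $b_3=0$, $b_4=1$, the other tied to the root of $r^2-r-1=0$). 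In those cases the leading part of $f$ is the \emph{quartic} $-\tfrac{9}{4c_*}Q_2^2$ created by the square completion, and the needed bound $(-\tfrac13,0)$ holds only because this quartic never has a real root of multiplicity four (Lemma \ref{lem-binary4}); if it did, one would only get $(-\tfrac14,0)$, hence $(-\tfrac34,0)$ overall, which is weaker than the required $(-\tfrac56,0)$. Your proposal contains no step that detects or excludes this case — it is precisely why the paper tracks $Q_2$ alongside $Q_3$. By contrast, the subcase you single out as the main obstacle (nonzero cubic part with a triple root) is harmless: the third derivative of the full phase in the distinguished direction is bounded away from zero near the origin, so Van der Corput gives $(-\tfrac13,0)$ uniformly, regardless of the higher-order terms coming from the splitting change of variables.

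For condition (b), you claim that ``a direct computation shows'' every nonzero critical point of $\mathcal{F}_3$ has Hessian of rank $\ge 2$ (rank $\le 2$ being automatic by Euler's relation for the homogeneous cubic). This is exactly where the hypothesis $\partial_j\mathcal{F}_3\not\equiv 0$ must enter, and it is the bulk of the paper's proof: one must show that a system of six polynomial equations in $(a_2,a_3,a_4)$ and the critical point has no admissible solution, which the paper does by eliminating variables with resultants and a discriminant argument; the excluded parameter values \eqref{equ-atype} appear exactly as the would-be solutions, and for those values (e.g. $\boldsymbol{a}=(0,1,1)$, where $\mathcal{F}_3=3y_3y_4(y_3+y_4)$) the rank claim genuinely fails. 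Without this verification the choice $(\beta_2,p_2)=(-1,0)$ in Theorem \ref{thm-algo} is unjustified, so this step cannot be passed off as routine.
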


Once proving Lemma \ref{lem-F3}, using Lemma \ref{lem-nocritical+priciple+quad} (b) again with weight $(\frac{1}{3},\frac{1}{3},\frac{1}{3})$, we know $M(\mathcal{F}_3+ \mathcal{F}_4)\curlyeqprec M(\mathcal{F}_3)$, and we shall get that $M(\mathcal{F}_1)\curlyeqprec (-\frac{4}{3},0)$ in the rank 1 case.

\begin{proof}[Proof of Lemma \ref{lem-F3}]
    We shall apply Theorem \ref{thm-algo} on $\mathcal{F}_3$. We begin with the linear independence for the partial derivatives.
Since
\begin{equation*}
    \frac{\partial \mathcal{F}_3}{\partial y_k} = 
    3(y_2+y_3+y_4+T)^2-3a_k(y_2+y_3+y_4)(2T+y_2+y_3+y_4)-3y_k^2,\ \ k=2,3,4.
\end{equation*}
Let $\lambda_2 \frac{\partial \mathcal{F}_3}{\partial y_2}+\lambda_3\frac{\partial \mathcal{F}_3}{\partial y_3}+\lambda_4\frac{\partial \mathcal{F}_3}{\partial y_4}=0$ and display the coefficient of each monomial, we get:
\begin{equation*}
    \begin{aligned}
      &A (1-a_j)^2  = B(1-2a_j)^2 + \lambda_j,\ \ j=2,3,4, \\
       & A (1-a_j)(1-a_i) = B(1-2a_j)(1-2a_i),\ j\ne i,\ \ 2\le i,j\le 4,
    \end{aligned}
\end{equation*}
where
$
A=\sum_{j=2}^4 \lambda_j$ and  $B=\sum_{j=2}^4 \lambda_j a_j.$ A discussion on whether $A$ (or $B$) equals to 0 gives that $\lambda_j=0$ for $j=2,3,4$. 

Now we verify condition (a) of Theorem \ref{thm-algo} for $\mathcal{F}_3$. By Lemmas \ref{lem-nocritical+priciple+quad} (c), it suffices to consider the rank 1 case. Similar to the previous proof, it suffices to prove 
\[
M(\widetilde{\mathcal{F}_3})\curlyeqprec(-\frac{5}{6},0),\quad\mbox{where}\ \ \widetilde{\mathcal{F}_3} =  c_* (y_2+b_3y_3+b_4y_4)^2+\mathcal{F}_3,\ c_*\ne 0.
\]
  By a change of variables  $\sigma_2=y_2+b_3y_3+b_4y_4,\,\sigma_3=y_3,\,\sigma_4=y_4$, we have
\begin{equation*}
    \widetilde{\mathcal{F}_3} = c_* \sigma_2^2 + 3\sigma_2 Q_2 +Q_3 +\mathcal{F}_6 = c_*\left(\sigma_2+\frac{3}{2c_*}Q_2\right)^2+Q_3-\frac{9}{4c_*}Q_2^2+\mathcal{F}_6,
\end{equation*}
where $\mathcal{F}_6 \in H_{\gamma_3,3}$ with  $\gamma_3 = (\frac{1}{2},\frac{1}{4},\frac{1}{4})$, and 
\begin{equation*}
    \begin{aligned}
        Q_2
         &=Q_2(\sigma_3,\sigma_4)= (1-a_2)\left((1-a_3-b_3+a_2b_3)\sigma_3+(1-a_4-b_4+a_2b_4)\sigma_4\right)^2\\
        &+a_2\left((a_3-a_2b_3)\sigma_3+ (a_4-a_2b_4)\sigma_4\right)^2-(b_3\sigma_3+b_4\sigma_4)^2,\\
        Q_3
        &=Q_3(\sigma_3,\sigma_4)=\left((1-a_3-b_3+a_2b_3)\sigma_3+(1-a_4-b_4+a_2b_4)\sigma_4\right)^3\\
        &+\left((a_3-a_2b_3)\sigma_3+ (a_4-a_2b_4)\sigma_4\right)^3+(b_3\sigma_3+b_4\sigma_4)^3-\sigma_3^3-\sigma_4^3.
    \end{aligned}
\end{equation*}

Note that both $Q_2$ and $Q_3$ should to be taken into account since, as we shall see, in some cases $Q_3$ vanishes and the contribution would come from $Q_2^2$.

After a change of variables $\widetilde{\sigma}_2=\sigma_2+\frac{3}{2c_*}Q_2$ with $\sigma_3$, $\sigma_4$ fixed, and using Lemma \ref{lem-nocritical+priciple+quad} (c), it remains to show
\begin{equation}\label{equ-Q23}
M\left(Q_3-\frac{9}{4c_*} Q_2^2\right) \curlyeqprec \left(-\frac{1}{3},0\right).    
\end{equation}

Our strategy is to discuss whether $Q_3$ vanishes. If $Q_3 \equiv 0$, then we show $Q_2^2$ has desired estimate.
If $Q_3 \not\equiv 0$, by the properties of binary cubic forms (see e.g. \cite[p. 85]{M21}) and the Van der Corput lemma, we also get the desired result.

Through an elementary (but tedious) computation on the coefficients of the monomials, we see that if $Q_3\equiv 0$, it suffices to consider the following two cases (other possible cases are slightly but not essentially different):

(1) $a_2=a_4 \neq 0,\,a_3=1,\,b_3=0,\,b_4=1$, and
$
   Q_3-\frac{9}{4c_*} Q_2^2=-\frac{9}{4c_*}(a_2\,\sigma_3^2-\sigma_4^2)^2.
$

(2) $a_2-a_4 = b_3 \neq 0,\,b_4=a_3-a_2 b_3=1,$ and $b_3$ is the real root of the equation $r^2-r-1=0$. In this case,
\begin{align*}
    Q_3-\frac{9}{4c_*} Q_2^2 
    & = -\frac{9}{4c_*}\left( -a_2b_3 \sigma_3^2+
        2(1-a_2)\sigma_3\sigma_4+
        b_3\sigma_4^2\right)^2.
\end{align*}

 In both cases, $Q_3(1,\sigma_4)- \frac{9}{4c}Q_2^2(1,\sigma_4)$ cannot process a root with multiplicity 4. Then we get \eqref{equ-Q23} by Lemma \ref{lem-binary4}.  
 In conclusion, we verify condition (a) of Theorem \ref{thm-algo} with $(\beta_1,p_1)=(-\frac{5}{6},0)$.

 Now we turn to condition (b) of Theorem \ref{thm-algo} for $\mathcal{F}_3$.

 By Lemma \ref{lem-S}, it suffices to show that for any possible critical point $x_0\,(\ne 0)$ of $\mathcal{F}_3$, we have rankHess$\mathcal{F}_3 (x_0)=2$. 
 We argue by contradiction that  $x_0=(w_1,w_2,w_3)$ is a critical point such that rankHess$F_3 (x_0)\le 1$, then the three principal minors in Hess$\mathcal{F}_3(x_0)$ are singular. Combining this with the fact $\nabla \mathcal{F}_3(x_0)=0$, by a direct computation we get the following six equations in $(a_2,a_3,a_4,w_1,w_2,w_3)$:
\begin{gather*}
          (1-a_2)S^2= w_1^2  -a_2 T^2,\quad\ (1-a_3)S^2 =w_2^2 -a_3 T^2,\quad\ (1-a_4)S^2=w_3^2 -a_4 T^2,\\
        \big( (1-a_2)^2 S -a_2^2 T-w_1\big)\big((1-a_3)^2 S -a_3^2 T-w_2\big)=\big((1-a_2)(1-a_3)S-a_2a_3T\big)^2,\\
        \big( (1-a_2)^2 S -a_2^2 T-w_1\big)\big((1-a_4)^2 S -a_4^2 T-w_3\big)=\big((1-a_2)(1-a_4)S-a_2 a_4 T\big)^2,\\
        \big( (1-a_3)^2 S -a_3^2 T-w_2\big)\big((1-a_4)^2 S -a_4^2 T-w_3\big)=\big((1-a_3)(1-a_4)S-a_3 a_4 T\big)^2,
\end{gather*}
where $S:= T+ w_1 + w_2 + w_3$.
Since $x_0\ne 0$, we claim that under the assumption of Lemma \ref{lem-F3}, this system has no solution. In fact, we solve $(a_2,a_3,a_4)$ from the first three equations and put them into the left three equations. Then we get a system of $(T,w_1,w_2,w_3)$,
\begin{gather*}
    S^3 = T^3 + w_1^3 + w_2^3 + w_3^3,\\
    S(w_2(T^2-w_1^2)^2+ w_1(T^2-w_2^2)^2)+ (w_1^2-w_2^2)^2 ST = w_1 w_2(T^2-S^2)^2\\
    + T((w_1^2-S^2)^2 w_2 + (w_2^2-S^2)^2 w_1),\\
    S(w_3(T^2-w_1^2)^2+ w_1(T^2-w_3^2)^2)+ (w_1^2-w_3^2)^2 ST = w_1 w_3(T^2-S^2)^2\\
    + T((w_1^2-S^2)^2 w_3 + (w_3^2-S^2)^2 w_1),\\
    S(w_3(T^2-w_2^2)^2+ w_2(T^2-w_3^2)^2)+ (w_2^2-w_3^2)^2 ST = w_2 w_3(T^2-S^2)^2\\
    + T((w_2^2-S^2)^2 w_3 + (w_3^2-S^2)^2 w_2).
\end{gather*}

To solve these equations, we compute the resultant of the first and the $j$-th equation for $j\in \{2,3,4\}$ and enumerate the solutions. Due to the symmetries, we take $j=4$ as an example and eliminate $T$ (or $w_3$) from the two equations, which gives $w_1^2 (w_1+w_2+Z)\, \mathcal{G}=0$, where
\[
\mathcal{G}=Z^2(Z+ w_1 + w_2 )(w_1^2 + w_1 w_2 + w_2^2)+w_1 w_2(w_1 + w_2)( Z (w_1+ w_2) + w_1 w_2 ),\quad Z=T \ \mbox{or}\ w_3.
\]
It is easy to check that $w_1=0$ or $w_1+w_2+Z=0$ gives either $x_0=0$ or $\boldsymbol{a}$ is of type \eqref{equ-atype}. The latter case leads to $\frac{\partial \mathcal{F}_3}{\partial y_j}\equiv 0$ for some $j\in \{2,3,4\}$,  contradicting to the assumption in Lemma \ref{lem-F3}. For left possible solutions, $(w_3,T)$ should be two distinct real roots of $\mathcal{G}$ in $Z$ (the case $w_3=T$ implies above trivial solutions). However, a calculation on the discriminant of this equation gives that $\mathcal{G}$ never has all roots real.

As a consequence, we get $(\beta_2,p_2)=(-1,0)$ in condition (b) of Theorem \ref{thm-algo}. Finally, taking $\alpha=\gamma_1=(\frac{1}{3},\frac{1}{3},\frac{1}{3})$ in Theorem \ref{thm-algo}, we have
\[
M(\mathcal{F}_3) \curlyeqprec \mathrm{max}\left\{\left(-\frac{5}{6},0\right),\ (-1,1)\right\}= \left(-\frac{5}{6},0\right).
\]
The proof of Lemma \ref{lem-F3} is completed.
\end{proof}

Finally, in the rank 1 case we get $(\beta_1,p_1)=(-\frac{4}{3},0)$ in Theorem \ref{thm-algo}.

\textbf{(2) The rank 2 case.}
 Like the rank 1 case, by symmetry we may assume that at the critical point $\PP+g$ is the 9-parameter deformation,
\[
W_1 := W_1(z,\boldsymbol{a},\boldsymbol{b},\boldsymbol{c}) = c_0+c_1(z_1+a_2 z_2+a_3 z_3+a_4 z_4)^2+c_2(z_2+b_1 z_1+b_3 z_3+b_4
z_4)^2+\PP,
\]
where $\boldsymbol{a}=(a_2,a_3,a_4)$, $\boldsymbol{b}=(b_1,b_3,b_4)$, $c_1c_2\ne 0$ and $a_2 b_1\ne 1$ (since the rank is 2).

In the sequel, the strategy is the same as the proof of $M(\widetilde{\mathcal{F}_3})\curlyeqprec (-\frac{5}{6},0)$ in the proof of Lemma \ref{lem-F3} (under \eqref{equ-Q23}). We first introduce a little notation, let
\[
\mu_1:=a_2 b_1-1,\ \ (\mu_2,\mu_3,\mu_4,\mu_5):=\frac{1}{\mu_1}(a_3 b_1-b_3,\, a_4 b_1-b_4,\, a_2 b_3- a_3,\, a_2 b_4 - a_4).
\]

As before, in $W_1$ we set $y_1=z_1+a_2 z_2+a_3 z_3+a_4 z_4$, $y_2=z_2+b_1 z_1+b_3 z_3+b_4
z_4$ and $y_k=z_k$ for $k=3,4$.
By a direct computation parallel to that of $\widetilde{\mathcal{F}_3}$, it suffices to prove the following  counterpart of \eqref{equ-Q23},
\begin{equation}\label{equ-R34}
    M\left(W_3+ \widetilde{c_1}W_{4}^2+ \widetilde{c_2}W_{5}^2\right)\curlyeqprec \left(-\frac{1}{3},0\right),
\end{equation}
where $\widetilde{c_1}\widetilde{c_2}\ne 0$,
\begin{align*}
    W_3 = y_3 y_4 (y_3 + y_4)-& \Big((\mu_2 y_3 + \mu_3 y_4) + (\mu_4 y_3 + \mu_5 y_4)\Big)\\
    &\times\Big((1-\mu_2)y_3+ (1-\mu_3)y_4\Big)\Big((1-\mu_4)y_3+ (1-\mu_5)y_4\Big),
\end{align*}
and
\begin{align*}
    W_{4} =\ & (b_1-1)\Big((\mu_2 y_3+ \mu_3 y_4)+ (\mu_4 y_3+ \mu_5 y_4)-(y_3 + y_4)\Big)^2\\
    &+ (\mu_4 y_3 + \mu_5 y_4)^2 -b_1 (\mu_2 y_3 + \mu_3 y_4)^2,\\
    W_{5} = \ & (a_2-1)\Big((\mu_2 y_3+ \mu_3 y_4)+ (\mu_4 y_3+ \mu_5 y_4)-(y_3 + y_4)\Big)^2\\
    &+ (\mu_2 y_3 + \mu_3 y_4)^2 -a_2 (\mu_4 y_3 + \mu_5 y_4)^2.
\end{align*}

If $W_3\not\equiv 0$, then we finish by Van der Corput lemma. If $W_3\equiv 0$, using Lemma \ref{lem-coincide} we shall show that $\widetilde{c_1}W_{4}^2+ \widetilde{c_2}W_{5}^2$ do not possess a root with multiplicity four. Thus \eqref{equ-R34} is valid by Lemma \ref{lem-binary4}.

We classify all cases that $W_3\equiv 0$. A direct calculation shows that, except for the trivial solutions, it suffices to consider the following two cases:
\begin{align*}
    &(1)\quad \mu_3 =1,\quad \mu_2 = -\mu_4 = -\mu_5,\quad \mu_2(\mu_2^2 -(\mu_2+1))=0.\\
    &(2)\quad \mu_2 = \mu_5 =1, \quad \mu_3 = \mu_4, \quad \mu_3(\mu_3^2 -( \mu_3 +1))=0.
\end{align*}

We only consider (1), since (2) is similar. If $\mu_2\ne 0$, then $\mu_5^2 + \mu_5 =1$ and
    \[
    W_4 = (b_1-1)\mu_5\, y_3^2 + 2 y_3 y_4 - b_1 \mu_5 y_4^2,\quad W_5 = (a_2-1)\mu_5 \, y_3 ^2 - 2a_2 y_3 y_4 + \mu_5 y_4^2,
    \]
    with the discriminants  
    \[
    \Delta_{W_4} = 4\big(\mu_5^2\, b_1 (b_1-1)+1\big),\quad \Delta_{W_5} = 4\big(a_2^2 -\mu_5^2 a_2+ \mu_5^2\big).
    \]
    Regard them as the quadratic form in $b_1$ and $a_2$, respectively. Since $\mu_5^2 <4$, we know both $\Delta_{W_4}$ and $\Delta_{W_5}$ are positive. Therefore, noting that $a_2 b_1\ne 1$, a combination of Lemma \ref{lem-coincide} and Lemma \ref{lem-binary4} gives
   \eqref{equ-R34}.

   If $\mu_5=0$, then
   \[
   W_4 = (b_1-1)y_3^2 -b_1 y_4^2,\quad W_5 = (a_2-1)y_3^2 + y_4^2.
   \]
   We use Lemmas \ref{lem-binary4}, \ref{lem-coincide} again to get the conclusion.

   As a result, we obtain \eqref{equ-R34},  thus $M(\widetilde{\mathbf{P}}_4)\curlyeqprec (-\frac{4}{3},0)$ by Theorem \ref{thm-algo}.\\

\section{space-time estimates and nonlinear equations}\label{sec-nonlinear}

To begin with, for $1\leqslant q,r < \infty$, the mixed space-time Lebesgue spaces $L^q_t \ell^r$ are Banach spaces endowed with the norms
\begin{equation*}
    ||\widetilde{F}||_{L^q_t \ell^r} := \left( \int_{\R}\left(\sum_{x\in \Z^d}|\widetilde{F}(x,t)|^r\right)^{\frac{q}{r}}dt\right)^{\frac{1}{q}},
\end{equation*}
 with natural modifications for the case $q=\infty$ or $r=\infty$.
By an abuse of notations, for any function $\widetilde{F}=\widetilde{F}(x,t)$ defined on $\Z^d\times\R$, we write $\widetilde{F}=\widetilde{F}(t)$ sometimes for simplicity. Recall that $\sqrt{-\Delta}$ has been defined in \eqref{equ-solu-semigroup}.

In this section, the proofs follow the same line as those of \cite[Theorems 1.4, 5.9]{BCH23}. 

\begin{proof}[Proof of Theorem \ref{thm-stri}]
    By Duhamel's formula,
    \begin{equation}\label{equ-duhamel}
        u(x,t) = \cos (t\sqrt{-\Delta}) f_1(x) + \frac{\sin (t\sqrt{-\Delta})}{\sqrt{-\Delta}} f_2(x) + \int_0^t \frac{\sin(t-s)\sqrt{-\Delta}}{\sqrt{-\Delta}}F(x,s)ds.
    \end{equation}
    Considering the space $\ell^2(\Z^5)$, we set the operators $U_{\pm}(t):=\chi_{[0,\infty)}(t) \,e^{\pm it\sqrt{-\Delta}}$, where $\chi$ is the characteristic function. From Theorem \ref{thm-main thm}, we know that $|(G*f)(t)|_{\ell^{\infty}} \lesssim (1+|t|)^{-11/6}|f|_1$ provided $f\in \ell^1$. Then as a direct consequence of  Keel and Tao \cite[Theorem 1.2]{KT98}, we have
    \begin{equation*}
        \|e^{it\sqrt{-\Delta}} F_1\|_{L^{q}_t \ell^{r}} \lesssim |F_1|_2\quad\mbox{and}\quad 
        \left\|\int_{0}^t e^{i(t-s)\sqrt{-\Delta}}F_2(s) \, ds\right\|_{L^{q}_t \ell^{r}} \lesssim \|F_2\|_{L^{\Tilde{q}'}_t \ell^{\Tilde{r}'}}
    \end{equation*}
    for indices satisfying \eqref{equ-stri index} and $(F_1,F_2)\in \ell^2(\Z^d)\times L_t^{\tilde{q}'} \ell^{\tilde{r}'}$. This together with \eqref{equ-duhamel} gives
    \begin{equation*}
        \|u\|_{L^q_t \ell^r} \lesssim
        |f_1|_{2} + \left|\frac{1}{\sqrt{-\Delta}}f_2\right|_{2} + \left\|\frac{1}{\sqrt{-\Delta}}F\right\|_{L_t^{\tilde{q}'} \ell^{\tilde{r}'}}.
    \end{equation*}
    Finally, we utilize the boundedness of the operator $(\sqrt{-\Delta})^{-1}$, see e.g. \cite[Lemma 5.5]{BCH23}, to finish the proof.
\end{proof}

\begin{theorem}\label{thm-global-1}
    In \eqref{equ-wave equ}, let $F = |u|^{k-1}u$ with $k\ge 3$ and $f_1\equiv 0$. If $f_2\in \ell^1(\Z^5)$ with $|f_2|_1$ sufficiently small, then for any $p \in [2,+\infty]$, the global solution $u_{(k)}$ exists in $\ell^p$ with 
    \begin{equation*}
        |u_{(k)}(t)|_p \lesssim (1+|t|)^{-\frac{11}{6}\left(
        1-\frac{2}{p}\right)}.
    \end{equation*}
\end{theorem}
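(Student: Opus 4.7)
The plan is to construct the global solution $u_{(k)}$ by a Banach fixed-point argument on the Duhamel equation
\[
u(t) \,=\, \frac{\sin(t\sqrt{-\Delta})}{\sqrt{-\Delta}}\, f_2 \,+\, \int_0^t \frac{\sin((t-s)\sqrt{-\Delta})}{\sqrt{-\Delta}}\bigl(|u|^{k-1}u\bigr)(s)\, ds \,=:\, \Phi(u)(t),
\]
working in the Banach space $X$ of time-dependent functions with
\[
\|u\|_X \,:=\, \sup_{t\in\R}\, |u(t)|_2 \,+\, \sup_{t\in\R}\, (1+|t|)^{\frac{11}{6}} |u(t)|_\infty.
\]
By the standard log-convex interpolation $|u(t)|_p \le |u(t)|_2^{2/p}|u(t)|_\infty^{1-2/p}$, one has $|u(t)|_p \le \|u\|_X (1+|t|)^{-\frac{11}{6}(1-\frac{2}{p})}$ for every $p\in[2,\infty]$, so the decay claimed in Theorem \ref{thm-global-1} is exactly what $\|u\|_X<\infty$ encodes.

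For the linear piece, Theorem \ref{thm-main thm} together with Young's inequality gives the dispersive bound $\bigl|\tfrac{\sin(t\sqrt{-\Delta})}{\sqrt{-\Delta}}f_2\bigr|_\infty \lesssim (1+|t|)^{-11/6}|f_2|_1$, while the $\ell^{10/7}\!\to\ell^2$ boundedness of $(\sqrt{-\Delta})^{-1}$ on $\Z^5$ (the tool underlying Theorem \ref{thm-stri}), combined with the $\ell^2$-unitarity of $\cos(t\sqrt{-\Delta})$ and the embedding $\ell^1\hookrightarrow\ell^{10/7}$, yields $\bigl|\tfrac{\sin(t\sqrt{-\Delta})}{\sqrt{-\Delta}}f_2\bigr|_2 \lesssim |f_2|_1$. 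Thus $\|\Phi(0)\|_X \lesssim |f_2|_1$. For the nonlinear piece, I would estimate the Duhamel integral in $\ell^\infty$ and in $\ell^2$ separately. Using $||u|^{k-1}u|_q = |u|_{kq}^k$ and the interpolation bound above:
\begin{align*}
\Bigl|\int_0^t \tfrac{\sin((t-s)\sqrt{-\Delta})}{\sqrt{-\Delta}}\,|u|^{k-1}u(s)\,ds\Bigr|_\infty &\lesssim \int_0^t (1+|t-s|)^{-\frac{11}{6}}|u(s)|_k^k\, ds \,\lesssim\, \|u\|_X^k \int_0^t (1+|t-s|)^{-\frac{11}{6}}(1+|s|)^{-\frac{11(k-2)}{6}}ds,\\
\Bigl|\int_0^t \tfrac{\sin((t-s)\sqrt{-\Delta})}{\sqrt{-\Delta}}\,|u|^{k-1}u(s)\,ds\Bigr|_2 &\lesssim \int_0^t |u(s)|_{10k/7}^k\, ds \,\lesssim\, \|u\|_X^k \int_0^t (1+|s|)^{-\frac{11(5k-7)}{30}}ds.
\end{align*}
The standard convolution lemma $\int_0^t (1+|t-s|)^{-\beta}(1+|s|)^{-\alpha}ds \lesssim (1+|t|)^{-\min(\alpha,\beta)}$ for $\min(\alpha,\beta)>1$ then closes the $\ell^\infty$ estimate with the correct weight $(1+|t|)^{-11/6}$, while the $\ell^2$ estimate follows from bare integrability of the integrand. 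Both exponents $\tfrac{11(k-2)}{6}$ and $\tfrac{11(5k-7)}{30}$ exceed $1$ precisely when $k\ge 3$, which is the origin of the hypothesis on $k$. Combining the two estimates gives $\|\Phi(u)\|_X \lesssim |f_2|_1 + \|u\|_X^k$, and the elementary inequality $\bigl||u|^{k-1}u - |v|^{k-1}v\bigr| \lesssim (|u|^{k-1}+|v|^{k-1})|u-v|$ yields the parallel Lipschitz bound $\|\Phi(u)-\Phi(v)\|_X \lesssim (\|u\|_X+\|v\|_X)^{k-1}\|u-v\|_X$. Choosing $|f_2|_1$ sufficiently small makes $\Phi$ a contraction on a small ball in $X$, and the unique fixed point is the asserted global solution.

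The main obstacle I anticipate is the $\ell^2$ control of the Duhamel term, since $\tfrac{\sin(t\sqrt{-\Delta})}{\sqrt{-\Delta}}$ is not uniformly bounded on $\ell^2$; one cannot resort to a naive energy inequality and must instead route through the smoothing $(\sqrt{-\Delta})^{-1}:\ell^{10/7}\to\ell^2$, which is precisely what forces the $\ell^{10k/7}$ decay rate in the time-integrability computation and produces the threshold $k\ge 3$. Everything else is an exercise in bookkeeping with the interpolation inequality and the convolution lemma.
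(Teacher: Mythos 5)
Your argument is correct, and it reaches the conclusion by a route that differs from the paper's in two respects worth noting. The paper runs the contraction in the single weighted space $\{\sup_t(1+|t|)^{\zeta_k}|\widetilde F(t)|_k\le 2C_0|f_2|_1\}$ with $\zeta_k=\frac{11}{6}(1-\frac2k)$: the only inputs are the interpolated kernel bound $|G(t)|_k\lesssim(1+|t|)^{-\zeta_k}$ (from Theorem \ref{thm-main thm} and $|G(t)|_\infty\lesssim 1$... wait, from the trivial bound $|G(t)|\le C$ pointwise, hence in $\ell^\infty$ after Young) and Young's inequality, together with the elementary inequality $1+|t|\le(1+|t-s|)(1+|s|)$ in place of your convolution lemma; it proves the stated decay only for $p=k$ and declares the general $p$ similar. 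You instead contract in the two-norm space $\sup_t|u(t)|_2+\sup_t(1+|t|)^{11/6}|u(t)|_\infty$, which buys all exponents $p\in[2,\infty]$ at once by interpolation, but at the price of needing an $\ell^2$ bound for the half-wave propagator applied to $\ell^1$ data and to the nonlinearity; since $\frac{\sin(t\sqrt{-\Delta})}{\sqrt{-\Delta}}$ is indeed not uniformly bounded on $\ell^2$ (the spectrum of $\sqrt{-\Delta}$ reaches $0$), you correctly route this through the $\ell^{10/7}\to\ell^2$ bound for $(\sqrt{-\Delta})^{-1}$ — an ingredient the paper uses only in the Strichartz theorem, not here — plus the $\ell^2$-boundedness of $\sin(t\sqrt{-\Delta})$ (your appeal to ``unitarity of $\cos(t\sqrt{-\Delta})$'' should read: $e^{\pm it\sqrt{-\Delta}}$ is unitary, hence $\sin(t\sqrt{-\Delta})$ is a contraction on $\ell^2$). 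The exponent bookkeeping checks out: $\frac{11(k-2)}{6}>1$ is the binding constraint forcing $k\ge3$ (the second exponent $\frac{11(5k-7)}{30}$ already exceeds $1$ for $k\ge2$, so ``precisely'' is a slight overstatement), and the Lipschitz estimate follows as you say. Net comparison: the paper's proof is lighter on linear machinery but gives the full range of $p$ only implicitly; yours is self-contained for all $p$, including the non-decaying $p=2$ endpoint, at the cost of invoking the discrete smoothing bound for $(\sqrt{-\Delta})^{-1}$.
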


\begin{proof}
    For the convenience of notations we write $\zeta_p = \frac{11}{6}\left(
        1-\frac{2}{p}\right)$.
    We only prove the theorem for $p=k$, while the case for general $p \ge 2$ is similar. 
    
    For the contraction mapping principle to work, we need to estimate the $\ell^q$ norm of the solution, i.e. $G(t)*f_2$, in terms of time and the $\ell^p$ norm of initial data $f_2$. By interpolation between Theorem \ref{thm-main thm} and a trivial inequality $|G(t)| \leq C$, we deduce
    \begin{equation*}
        |G(t)|_k \leq C_k(1+|t|)^{-\zeta_k}.
    \end{equation*}
    Therefore, by Young's inequality it holds that
    \begin{equation}\label{equ-lplq}
        |G(t)*f_2|_q\leq |G(t)|_r |f_2|_p \leq C_r (1+|t|)^{-\zeta_r} |f_2|_p
    \end{equation}
    where $\frac{1}{r}=1+\frac{1}{q}-\frac{1}{p}$.

    Now we consider the metric space
    \begin{equation*}
        \mathcal{M}:=\left\{\widetilde{F} :\Z^5 \times \R \rightarrow \C , \|\widetilde{F}\|_{\mathcal{M}}= \sup_{t\in\R}(1+|t|)^{\zeta_k}|\widetilde{F}(\cdot,t)|_{k} \leq 2C_0|f_2|_{1}\right\},
    \end{equation*}
    with $C_0=C_0(k)$ to be determined later and the map $\Lambda$ on $\mathcal{M}$,
        \begin{equation*}
        \Lambda \widetilde{F} :=\Lambda \widetilde{F}(t) = G(t)*f_2 + \int_{0}^{t}G(t-s)*F(\widetilde{F}(s)) \, ds.
    \end{equation*}
    Given that $\widetilde{F} \in \mathcal{M}$, we see that
    \begin{equation*}
        (1+|t|)^{\zeta_k}
        |\Lambda \widetilde{F}(t)|_{k} \leq
        |f_2|_1+\int_0^t \left(
        \frac{1+|t|}{1+|t-s|} \right)^{\zeta_k} |\widetilde{F}(s)|_k^k \,ds
        \leq |f_2|_1 + \|\widetilde{F}\|_{\mathcal{M}}^k\mathbf{U}_k,
    \end{equation*}
    where we have used \eqref{equ-lplq} and $\mathbf{U}_k=\int_{\R}\, (1+|s|)^{(1 - k)\zeta_k}\, ds < \infty$, verified by $k \geq 3$. Taking the supremum, choosing proper $C_0$ and supposing $|f_2|_1$ sufficiently small in order, we know $\Lambda \widetilde{F} \in \mathcal{M}$. One can also check that 
    \begin{equation*}
        \|\Lambda u_1-\Lambda u_2\|_{\mathcal{M}}
        \leq C_k
        \mathbf{U}_k(2C_0|f_2|_1)
        ^{k-1}\|u_1-u_2\|_{\mathcal{M}}.
    \end{equation*}
    Therefore, $\Lambda$ is a contraction as long as $|f_2|_1$ is sufficiently small. Finally, $\Lambda$ admits a fixed point in $\mathcal{M}$, which is the global solution to \eqref{equ-wave equ}. 
\end{proof}

\section{Appendix}\label{Appendix}
Let $d=5$ and $\mathbf{P}_4$ be as in \eqref{equ-PP}, note that $\mathbf{P}_4$ is finitely determined (cf. \cite[Chapter 5]{M21}). We prove that the oscillation index (cf. Section \ref{ssec-newton}) of $\mathbf{P}_4$ at $0$ is $-\frac{11}{6}$ with multiplicity $0$. Then one can verify \eqref{equ-sharp} by the proof of \cite[Lemma 3.6]{BCH23} and the following proof with slight modifications. 

Recall that $\PP(\xi)=(\sum_{j=1}^4 \xi_j)^3-\sum_{j=1}^4 \xi_j^3$ for $\xi\in \R^4$, the Newton distance $d_{\PP}=\frac{3}{4}$. By the additivity of the oscillation index, it suffices to prove the following assertion.
\begin{prop}\label{prop-PP4}
    There exist $\psi\in C_0^\infty(\R^4)$ with $\psi(0)\ne 0$, and $c_\psi \ne 0$ such that
    \[
    \int_{\R^4} e^{i \lambda \PP(\xi)} \psi(\xi)\, d\xi = c_\psi\, \lambda^{-\frac{4}{3}} + \mathcal{O}\left(\lambda^{-\frac{3}{2}}\log \lambda\right),\quad\mbox{as}\ \ \lambda\rightarrow +\infty.
    \]
\end{prop}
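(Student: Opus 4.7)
The plan is to exploit the $\gamma_2=(\tfrac13,\tfrac13,\tfrac13,\tfrac13)$-quasi-homogeneity of $\PP$: substituting $\xi=\lambda^{-1/3}\eta$ (Jacobian $\lambda^{-4/3}$, phase rescaled via $\lambda\PP(\lambda^{-1/3}\eta)=\PP(\eta)$) converts the integral into
\[
\int_{\R^4}e^{i\lambda\PP(\xi)}\psi(\xi)\,d\xi \,=\, \lambda^{-4/3}\int_{\R^4}e^{i\PP(\eta)}\psi(\lambda^{-1/3}\eta)\,d\eta \,=:\, \lambda^{-4/3}B(\lambda),
\]
so the task reduces to showing $B(\lambda)\to c_\psi\ne 0$ at rate $\mathcal{O}(\lambda^{-1/6}\log\lambda)$.

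I would next introduce regularised moment integrals
\[
M_\beta \,:=\, \lim_{R\to\infty}\int_{\R^4}e^{i\PP(\eta)}\,\eta^\beta\,\chi(\eta/R)\,d\eta,\qquad \beta\in\N^4,
\]
for a fixed $\chi\in C_0^\infty(\R^4)$ equal to $1$ near the origin. Here the condition $\mathcal{Z}_\PP=\emptyset$ (established in the proof of Proposition \ref{prop-Q_11 3+4}$(ii)$) enters crucially: combined with the $2$-homogeneity of $\nabla\PP$, it yields $|\nabla\PP(\eta)|\gtrsim|\eta|^2$ for $\eta\ne 0$, so the integration-by-parts operator $L^\ast f:=\nabla\cdot\bigl((i|\nabla\PP|^2)^{-1}\nabla\PP\,f\bigr)$ reduces the integrand's pointwise size by a factor $|\eta|^{-3}$ per iteration. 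Applied sufficiently many times to the annular difference $\eta^\beta[\chi(\eta/R_2)-\chi(\eta/R_1)]$, this renders the contribution summably small as $R_1\to\infty$ and shows that $\{M_\beta^R\}_R$ is Cauchy.

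Taylor-expanding $\psi(\lambda^{-1/3}\eta)$ at the origin and substituting into $B(\lambda)$ then produces the formal expansion
\[
B(\lambda) \,=\, \sum_{|\beta|\le N}\tfrac{\partial^\beta\psi(0)}{\beta!}\,\lambda^{-|\beta|/3}\,M_\beta \,+\, \mathcal{E}_N(\lambda),
\]
with candidate leading coefficient $c_\psi=\psi(0)M_0$. The remainder $\mathcal{E}_N(\lambda)$ is controlled by a dyadic decomposition of the support of $\psi(\lambda^{-1/3}\cdot)$ into annuli $|\eta|\sim 2^k$ with $2^k\lesssim\lambda^{1/3}$: on each annulus, the Taylor bound $\lesssim\lambda^{-(N+1)/3}|\eta|^{N+1}$ is balanced against the $2^{(4-3K)k}$ gain from $K$ applications of $L^\ast$. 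Taking $N=1$ already absorbs the error into $\mathcal{O}(\lambda^{-5/3})\subset\mathcal{O}(\lambda^{-3/2}\log\lambda)$ after the outer rescaling by $\lambda^{-4/3}$, with the $\log\lambda$ accommodating the $\mathcal{O}(\log\lambda)$ boundary dyadic scales that contribute at the transition between the Taylor-dominated and IBP-dominated regimes.

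The main obstacle is proving $c_\psi=\psi(0)M_0\ne 0$, which is the sharpness of the oscillation index of $\PP$. The cleanest route is to invoke Varchenko's theorem on Newton polyhedra \cite{V76}: the condition $\mathcal{Z}_\PP=\emptyset$ is exactly the $\R$-nondegeneracy of $\PP$ on its principal face $\{|\alpha|=3\}\cap\R^4_+$, whose codimension-one character in $\R^4$ enforces multiplicity $0$ and a non-trivial leading coefficient at the predicted rate $\lambda^{-4/3}$. Alternatively, $M_0$ can be studied via the Mellin transform $F(s):=\int_0^\infty\lambda^{s-1}\bigl(\int e^{i\lambda\PP}\psi\,d\xi\bigr)d\lambda$, which has a simple pole at $s=4/3$ whose residue is proportional to $c_\psi$; non-vanishing of the residue can be checked by contour deformation exploiting the $3$-homogeneity of $\PP$, or by direct computation with a judiciously chosen radial $\psi$. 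This step closely parallels the leading-term computation in \cite[Lemma 3.6]{BCH23}, cited in the excerpt as the template.
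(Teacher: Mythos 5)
Your rescaling framework is sound as far as it goes: since $\PP$ is homogeneous of degree $3$ with $\nabla\PP$ vanishing only at the origin (this is exactly the computation showing $\mathcal{Z}_{\PP}=\emptyset$), the substitution $\xi=\lambda^{-1/3}\eta$, the conditional convergence of the moments $M_\beta$ via repeated integration by parts with gain $|\eta|^{-3}$ per step, and the dyadic error bookkeeping do produce an expansion of the form $\psi(0)M_0\,\lambda^{-4/3}+\mathcal{O}(\lambda^{-3/2}\log\lambda)$ (indeed even a better remainder). This is a genuinely different, and in that respect cleaner, route than the paper's, which instead changes variables to $4(w_1+w_3)^3-(w_1^3+w_3^3)-3w_1w_2^2-3w_3w_4^2$, applies stationary phase in $w_2,w_4$, and reduces to explicit two-dimensional integrals in the symmetric coordinates $(s_1,s_2)$.

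The gap is precisely the point you label the ``main obstacle'': showing $c_\psi=\psi(0)M_0\neq0$, and neither of your proposed justifications closes it. The Varchenko route fails: $\R$-nondegeneracy must hold for \emph{all} compact faces of the Newton polyhedron, with gradients nonvanishing on $(\R\setminus\{0\})^4$, not merely $\mathcal{Z}_{\PP}=\emptyset$ on the sphere; and it actually fails here, e.g.\ the face obtained by setting $\gamma_4=0$ carries the polynomial $3(\xi_1+\xi_2)(\xi_2+\xi_3)(\xi_1+\xi_3)$, whose gradient vanishes at $(t,-t,-t)$ with all coordinates nonzero. Moreover the Newton distance of $\PP$ is $d_{\PP}=\tfrac34<1$, outside the regime in which Varchenko-type sharpness/positivity of the leading coefficient is available, so even nondegeneracy would not hand you $M_0\neq0$. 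The Mellin alternative merely restates the problem: nonvanishing of the residue at $s=4/3$ \emph{is} the statement $M_0\neq0$, and ``direct computation with a judiciously chosen radial $\psi$'' is not carried out (nor is it easy, since the phase is far from radial and the regularized integral $\int_{\R^4}e^{i\PP}$ could in principle cancel). This nonvanishing is exactly where the paper spends its effort: after the reduction to the $(s_1,s_2)$-integrals $\mathcal{K}_1,\dots,\mathcal{K}_7$, each leading term is a fixed nonzero complex stationary-phase constant integrated against a positive weight, so the contributions cannot cancel. Without an argument of this kind (or some other concrete evaluation of $M_0$), your proof establishes only the upper bound structure of the expansion, not the sharpness assertion of the proposition.
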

\begin{proof}
    Using a change of variables $$\xi_1+\xi_2=2 w_1,\ \xi_1-\xi_2=2 w_2,\ \xi_3+\xi_4=2 w_3,\ \xi_3-\xi_4=2 w_4,$$ it suffices to consider $J(\lambda, f,\psi)$ for proper $\psi$ (see \eqref{equ-JJJ2} below), where
    \[
    f(w) = 4(w_1+w_3)^3 -(w_1^3+w_3^3) - 3 w_1 w_2^2 -3 w_3 w_4^2,\quad w\in\R^4.
    \]
    Let $C\gg 1$ be a constant, and
    \begin{align*}
        & D_1 = \left\{w: |w_1|\le C\lambda^{-1},|w_3|\le C\lambda^{-1}\right\},  \ \ 
         D_2 = \left\{w: |w_1|\ge C\lambda^{-1},|w_3|\ge C\lambda^{-1}\right\},\\
        & D_3 = \left\{w: |w_1|> C\lambda^{-1},|w_3|< C\lambda^{-1}\right\}, \ \  D_4 = \left\{w: |w_1|< C\lambda^{-1},|w_3|> C\lambda^{-1}\right\}.
    \end{align*}
We write
\[
J(\lambda,f,\psi) = \int_{D_1}+ \int_{D_2}+\int_{D_3}+ \int_{D_4}=: J_1+ J_2+ J_3+ J_4.
\]
    
    It is easy to see that $J_1 = \mathcal{O}(\lambda^{-2})$, while on $D_3$, the method of stationary phase gives
    \begin{equation}\label{equ-sta}
         \int_{-\infty}^{+\infty} e^{i\lambda w_1 w_2^2}\,\psi(w)\,dw_2 =  \frac{c}{\sqrt{\lambda|w_1|}}\psi(w_1,0,w_3,w_4) + \mathcal{O}(\lambda^{-1}|w_1|^{-1}),\quad \mbox{as} \ \ \lambda\rightarrow+\infty,
    \end{equation}
  Inserting this to $J_3$ yields an upper bound $|J_3|\lesssim\lambda^{-\frac{3}{2}}$. The same estimate is valid for $J_4$.
Now it suffices to consider $J_2$. Similar to \eqref{equ-sta}, we obtain that
    \begin{equation}\label{equ-JJJ2}
        J_2 = c\lambda^{-1}\int_{\mathcal{V}}\, e^{i\lambda (4(w_1+w_3)^3-(w_1^3+w_3^3))}|w_1 w_3|^{-\frac{1}{2}}\psi_1(w_1,w_2)\, dw_1 dw_2  + \mathcal{O}(\lambda^{-2}\log^2\lambda),
    \end{equation}
    provided $\lambda$ large enough, where $\mathcal{V}=\{(w_1,w_3)\in\R^2:|w_1|>C\lambda^{-1},|w_3|>C\lambda^{-1}\}$, and $\psi_1=\psi_0((w_1+w_3)^6,w_1^6 w_3^6)$ for some $\psi_0\in C^\infty(\R^2)$ supported in $B_{\R^2}(0,1)$. 
    
     Denote by $\mathbf{K}$ the integrand in \eqref{equ-JJJ2}, by symmetries it suffices to estimate
    \[
    \mathcal{K}_1 = \int_{\mathcal{V}\,\cap\, \mathcal{D}}\, \mathbf{K}\,dw_1 dw_2 \quad \mbox{and}\  \quad \mathcal{K}_2 =\int_{\mathcal{V}\,\cap\, \{(w_1,w_3)\in\R^2:w_1>0,\, w_3<0\}}\,\mathbf{K}\,dw_1 dw_2,
    \]
where $\mathcal{D}$ is the triangle
with vertices $(C\lambda^{-1},C\lambda^{-1}),$ $(1/2,1/2)$ and $(1-C\lambda^{-1},C\lambda^{-1})$.
    We begin with $\mathcal{K}_1$, a change of variables $ s_1=w_1+w_3$, $s_2=w_1 w_3$ gives
    \begin{equation}\label{equ-J5}
        \mathcal{K}_1 = \int_{\frac{2}{\lambda}}^1  \int_{\frac{1}{\lambda^2}}^{\frac{s_1^2}{4}} \frac{e^{3 i \lambda s_1(s_1^2+s_2)}}{\sqrt{s_2(s_1^2-4s_2)}}\,\psi_1(s_1,s_2)\, ds_2 ds_1+ \mathcal{O}(\lambda^{-\frac{1}{2}}\log \lambda),\quad\lambda\rightarrow+\infty,
    \end{equation}
    where the remainder comes from the integral on the triangle with vertices $(2C\lambda^{-1}, C^2 \lambda^{-2}),$ $(1,\lambda^{-1}(1-\lambda^{-1}))$ and $(1,1/4)$ in $(s_1,s_2)$-plane.

    A change of variable $r= 4s_2/s_1^2$ gives
    \begin{align*}
        \mathcal{K}_1 &= \frac{1}{2}\int_{\frac{4}{\lambda^2}}^1 \left(\int_{\frac{2}{\lambda \sqrt{r}}}^1 e^{3 i \lambda s_1^3 (1+r/4)}\, \psi_1(s_1,s_1^2 r/4)\,ds_1\right)\frac{dr}{\sqrt{r(1-r)}}+ \mathcal{O}(\lambda^{-\frac{1}{2}}\log \lambda)\\
        & = \frac{1}{2}\int_{\frac{4}{\lambda^2}}^1 \left(\int_{0}^1\, e^{3 i \lambda s_1^3 (1+r/4)}\, \psi_1(s_1,s_1^2 r/4)\,ds_1\right)\frac{dr}{\sqrt{r(1-r)}}+ \mathcal{O}(\lambda^{-\frac{1}{2}}\log \lambda).
    \end{align*}
Using the method of stationary phase (cf. \cite[Chapter 8\, \S 5]{S93}) to the inner integral yields the asymptotic $\mathcal{K}_1 = c_1 \lambda^{-\frac{1}{3}}+ \mathcal{O}(\lambda^{-\frac{1}{2}}\log \lambda)$ as $\lambda\rightarrow+\infty$.

Similar argument can be applied to $\mathcal{K}_2$. Indeed, a simple computation gives that
\[
\mathcal{K}_2  = \int_{\{(w_1,w_3)\in\R^2:\,0<w_1<1,\,-1<w_3<0\}}\,\mathbf{K}\,dw_1 dw_2  + \,\mathcal{O}(\lambda^{-\frac{1}{2}}).
\]
In coordinate systems $(s_1,s_2)$ where $s_1=w_1+w_3$ and $s_2=-w_1 w_3$, we have
\begin{align*}
    \mathcal{K}_2 = \int_0^{+\infty} \left( \int_{-\infty}^{+\infty} \frac{e^{i\lambda s_1 (s_1^2-s_2)}}{\sqrt{s_1^2+ 4s_2}}\,\psi_1(s_1,s_2)\, ds_1\right) \frac{ds_2}{\sqrt{s_2}} + \,\mathcal{O}(\lambda^{-\frac{1}{2}})=: \mathcal{K}_3+\,\mathcal{O}(\lambda^{-\frac{1}{2}}).
\end{align*}
Writing $r=s_1/\sqrt{s_2}$ gives
\begin{align*}
 \mathcal{K}_3 &= \int_0^{+\infty} \left(\int_{-\infty}^{+\infty} \frac{e^{i \lambda s_2^{\frac{3}{2}}r(r^2-1)}}{\sqrt{r^2+4}} \psi_1(\sqrt{s_2}\,r,s_2)\, dr \right) \frac{ds_2}{\sqrt{s_2}}\quad (\mbox{let}\ \tau=s_2^{\frac{3}{2}}) \\
&= \int_{-\infty}^{+\infty} \left(\int_0^{+\infty} e^{i\lambda \tau\, r(r^2-1)}\, \tau^{-\frac{2}{3}}\, \psi_1(\tau^{\frac{1}{3}}r,\tau^{\frac{2}{3}})\,d\tau\right) \frac{dr}{\sqrt{r^2+4}}.
\end{align*}

Noting that $r(r^2-1)$ has zeros $0,\pm 1$, we split 
$$
\mathcal{K}_3=\int_{|r|<C\lambda^{-1}} + \int_{C\lambda^{-1}<|r|<1-C\lambda^{-1}} + \int_{|r-1|<C\lambda^{-1}}+ \int_{|r|>1+C\lambda^{-1}}=:+\mathcal{K}_4+\mathcal{K}_5+\mathcal{K}_6+\mathcal{K}_7.
$$
Obviously, $\mathcal{K}_4=\mathcal{O}(\lambda^{-1})$ and $\mathcal{K}_6=\mathcal{O}(\lambda^{-1})$. For the left two terms we use stationary phase method as in the estimate of $\mathcal{K}_1$ (for $\mathcal{K}_7$ we use a change of variable $y= r^3 \tau$ in addition), and get that both of them have asymptotics $\lambda^{-\frac{1}{3}}$ as $\lambda\rightarrow \infty$.

As a consequence, we check that there exists $c_0\ne 0$ such that $J_2 =c_0 \lambda^{-\frac{4}{3}}+ \mathcal{O}(\lambda^{-\frac{1}{2}}\log\lambda)$, which completes the proof of Proposition \ref{prop-PP4}.
\end{proof}

\section*{Acknowledgement}

C.Bi is grateful to Prof. James Montaldi and Titus Piezas III for helpful discussions and suggestions. B.Hua is supported by NSFC, No. 12371056, and by Shanghai Science and Technology Program [Project No. 22JC1400100].

%\bibliography{Bib-sample}
%\bibliographystyle{siam}
%\small
\printbibliography

\end{document}